\newtheorem{theorem}{Theorem}
\newtheorem{lemma}[theorem]{Lemma}
\newtheorem{proposition}[theorem]{Proposition}
\newtheorem{corollary}[theorem]{Corollary}
\theoremstyle{definition}
\newtheorem{definition}[theorem]{Definition}
\newtheorem{remark}[theorem]{Remark}
\newtheorem{example}[theorem]{Example}
\numberwithin{equation}{section}
\numberwithin{theorem}{section}
\DeclareMathOperator{\diag}{diag}
\DeclareMathOperator{\diam}{diam}
\DeclareMathOperator{\Res}{Res}
\DeclareMathOperator{\supp}{supp}
\newcommand{\ds}{\displaystyle}
\newcommand{\wt}{\widetilde}
\newcommand{\lozengeblue}{
	w_{\begin{tikzpicture}[scale=0.2]
			\draw[fill=blue!20] (0,0) rectangle (1,1);
\end{tikzpicture}}}
\newcommand{\lozengeyellow}{
	w_{\begin{tikzpicture}[scale=0.2]
			\draw[fill=yellow!50] (0,0) -- (1,1) -- (1,2) -- (0,1) -- cycle;
\end{tikzpicture}}}
\newcommand{\lozengered}{
	w_{\begin{tikzpicture}[scale=0.2]
			\draw[fill=red!20] (0,0) -- (1,0) -- (2,1) -- (1,1) -- cycle; 
\end{tikzpicture}}}
\let\Re\undefined
\let\Im\undefined
\DeclareMathOperator{\Re}{Re}
\DeclareMathOperator{\Im}{Im}
\title{Critical measures on higher genus Riemann surfaces}
\author{Marco Bertola
	\footnote{Department of Mathematics and Statistics, Concordia
	University, 1455 de Maisonneuve W., Montr\'eal, Qu\'ebec, 
Canada H3G 1M8, Email: marco.bertola@concordia.ca, and
SISSA, International School for Advanced Studies, via Bonomea 265, Trieste, Italy. Email: Marco.Bertola@sissa.it.  Supported in part by the Natural Sciences and Engineering Research Council of Canada (NSERC) grant RGPIN-2016-06660. }
	\and
	Alan Groot 
	\footnote{Department of Mathematics, Katholieke Universiteit Leuven, Leuven, Belgium, Email: alan.groot@kuleuven.be, alangroot@gmail.com. 
		Supported by
		long term structural funding-Methusalem grant of the Flemish
		Government.}	 
	\and  Arno B.J. Kuijlaars
	\footnote{Department of Mathematics,  Katholieke Universiteit Leuven, Leuven, Belgium, Email: arno.kuijlaars@kuleuven.be. Supported by long term structural funding-Methusalem grant of the Flemish
		Government, and by FWO Flanders projects
		EOS 30889451 and 
		G.0910.20.}	  
}
\begin{document}
	\maketitle
	
	\begin{abstract}
	Critical measures in the complex plane are saddle points 
	for the logarithmic energy with external field. Their local and
	global structure was described by Mart\'inez-Finkelshtein and Rakhmanov.
	In this paper we start the development of a theory of critical measures on higher genus Riemann surfaces, where the logarithmic 
	energy is replaced by the energy with respect to 
	a bipolar Green's kernel.
	We study a max-min problem for the bipolar Green's energy
	with external fields $\Re V$ where $dV$ is a
	meromorphic differential.  
	Under reasonable assumptions the max-min problem has a
	solution and we show that the corresponding equilibrium 
	measure is a critical measure in the external field. 
	In a special genus one situation we are able to show 
	that the critical
	measure is supported on maximal trajectories of a 
	meromorphic quadratic differential.
	
	We are motivated by applications to random lozenge tilings 
	of a hexagon with periodic weightings. 
	Correlations in these models are expressible in terms of 
	matrix valued orthogonal polynomials. The matrix 
	orthogonality is interpreted as (partial) scalar orthogonality 
	on a Riemann surface. The theory of critical measures  
	will be useful for the asymptotic analysis of a corresponding
	Riemann-Hilbert problem as we outline in the paper.
		
	\end{abstract}
			
	\section{Introduction}
	
	The notion of critical measures in the complex plane was
	developed by Mart\'inez-Finkelshtein and Rakhmanov
	\cite{MFR11,MFR16,Rak12} with the aim of studying asymptotic
	zero distributions of Heine-Stieltjes polynomials. It is related
	to the asymptotics of orthogonal polynomials with a non-hermitian
	orthogonality on contours in the complex plane as initiated by
	Stahl \cite{Sta86,Sta91} and Gonchar and Rakhmanov \cite{GR87}. 
	The goal of this paper is to generalize
	the notion of critical measures to higher genus Riemann 
	surfaces. Our motivation to do so comes from the analysis of certain
	tiling problems with periodic weightings as we will explain in Section~\ref{sec:motivation} of this paper.
	
	\subsection{Critical measure in the complex plane}
	We start by recalling the notion of critical measure 
	in the plane following \cite{MFR11}. It can be defined
	in a more general situation, but we restrict here to the
	case of an external field $\Re V$ where $V'$ is a rational
	function on $\mathbb C$. In fact, $V$ itself can be multi-valued,
	but $\Re V$ is assumed to be well-defined and
	single-valued on $\mathbb C$. The logarithmic energy of
	a probability measure $\mu$ in the external field $\varphi = \Re V$ is
	\begin{equation} \label{EVmu} 
		E_\varphi\left[\mu\right] =  \iint \log \frac{1}{|s-t|} d\mu(s) d\mu(t)
		+ \int \varphi d\mu. 
	\end{equation}
	
	A critical measure is a probability measure $\mu$ such that
	\eqref{EVmu} is stationary with respect to certain
	perturbations of $\mu$, known as Schiffer variations.
	For a continuous function $h: \mathbb C \to \mathbb C$, and a probability measure $\mu$, 
	the one parameter family $(\mu_{\varepsilon,h})_{\varepsilon \in \mathbb R}$
	of probability measures is defined through their action on 
	continuous functions $f$, 
	\begin{equation} \label{muepsh} 
		\int f d\mu_{\varepsilon,h} = 
		\int f\left(s+\varepsilon h(s)\right) d\mu(s),
		\qquad \varepsilon \in \mathbb R.  \end{equation}
	
	\begin{definition} \label{definition11}
		The probability measure $\mu$ is a \textbf{critical measure} 
		in the external	field $\varphi$ if 
		\begin{equation} \label{DVh} 
			\lim_{\varepsilon \to 0} \frac{
				E_\varphi\left[ \mu_{\varepsilon,h} \right] - E_\varphi\left[\mu\right]}{\varepsilon} = 0 \end{equation}
		for every $C^1$ function $h$  with compact support.
	\end{definition} 
	The definition is a special case of \cite[Definition~3.2]{MFR11}.
	By \cite[Lemma~3.1]{MFR11}, the limit \eqref{DVh} exists 
	and is equal to 
	\[ - \Re \left[ \iint \frac{h(s)-h(t)}{s-t} d\mu(s) d\mu(t) - 
	\int h(s) V'(s) d\mu(s) \right]. \]
	By considering both $h$ and $ih$, it follows that $\mu$
	is a critical measure if and only if 	
	\begin{equation} \label{CM1}
		\iint \frac{h(s)-h(t)}{s-t} d\mu(s) d\mu(t)
		=  \int h(s) V'(s) d\mu(s) 
	\end{equation}
	for every $C^1$ function $h$ with compact support.
	With a limiting argument \eqref{CM1} can be extended
	to $C^1$ functions $h$ that are bounded on the support of $\mu$.
	
	The main result on critical measures is that they 
	are supported on trajectories of quadratic differentials. 
	Recall that  a trajectory
	of the quadratic differential $-Q dz^2$ is an (open or closed)
	contour $\Sigma$ such that $Q(z(s)) (z'(s))^2 < 0$
	where $s \mapsto z(s)$ is any smooth parametrization of $\Sigma$,
	see e.g.\ \cite{Str84}.
	
	\begin{theorem}[Mart\'inez-Finkelshtein and Rakhmanov \cite{MFR11}] \label{theo:MFR}  Suppose $\varphi = \Re V$ where $V'$ is rational 
	on $\mathbb C$.
	For a critical measure $\mu$ in the external field $\varphi$, 
		we let
		\begin{equation} \label{QD3} 
			Q(z) 
			= \left(\frac{V'(z)}{2} \right)^2 - \int \frac{V'(z)-V'(s)}{z-s} d\mu(s).  \end{equation}
		Then the following hold.
		\begin{enumerate}
			\item[\rm (a)] 
			$Q$ is a rational function with the property that
			\begin{equation} \label{QD2} \left[\int \frac{d\mu(s)}{z-s} - \frac{V'(z)}{2} \right]^2 = Q(z),
				\quad m_2 \text{-a.e.},  \end{equation}
			where $m_2$ denotes the planar Lebesgue measure.
			\item[\rm (b)] 
			The support $\Sigma = \supp(\mu)$ consists of a 
			finite union of maximal trajectories of the quadratic differential $-Q(s) ds^2$,
			and on each trajectory we have (with $ds$ the
			complex line element and an appropriate branch of the square root)
			\[ d\mu(s) = \frac{1}{\pi i} Q(s)^{1/2} ds, \qquad s \in \Sigma.\]
			\item[\rm (c)] The logarithmic potential 
			$U^{\mu}(z) =	\ds \int \log \frac{1}{|z-s|}d\mu(s)$ of $\mu$
			satisfies
			\begin{equation} \label{Sprop1} 
				2 U^{\mu}(z) + \Re V(z) = c_j, \qquad z \in \Sigma_j, 
			\end{equation}
			with a constant $c_j$ that can be different for
			each connected component $\Sigma_j$ of $\Sigma$.
			\item[\rm (d)]  
			Any  point $z \in \Sigma$ that is not a zero of $Q$
			has a neighborhood $D$ such that $D \cap \Sigma$
			is an analytic arc, and  
			\begin{equation} \label{Sprop2} 
				\frac{\partial}{\partial n_+} \left(2U^{\mu} + \Re V\right)(z)
				= \frac{\partial}{\partial n_-} \left(2 U^{\mu} + \Re V\right)(z), \quad z \in D \cap \Sigma,
			\end{equation} 
			where $\frac{\partial}{\partial n_{\pm}}$ denote the two
			normal derivatives to $\Sigma$ at $z$.
		\end{enumerate}
	\end{theorem}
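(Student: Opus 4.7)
The plan is to exploit the critical measure identity \eqref{CM1} by evaluating it at the test function $h(s) = \frac{1}{z-s}$ for a fixed $z \in \mathbb C \setminus \supp(\mu)$; this $h$ is $C^1$ on $\mathbb C \setminus \{z\}$ and bounded on $\supp(\mu)$, so it falls under the extended form of \eqref{CM1}. The elementary simplification $\frac{h(s)-h(t)}{s-t} = \frac{1}{(z-s)(z-t)}$ turns the left-hand side into $C^\mu(z)^2$ with $C^\mu(z) = \int \frac{d\mu(s)}{z-s}$ the Cauchy transform, while the decomposition $V'(s) = V'(z) - (V'(z) - V'(s))$ turns the right-hand side into $V'(z) C^\mu(z) - \int \frac{V'(z)-V'(s)}{z-s} d\mu(s)$. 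Completing the square gives $(C^\mu(z) - V'(z)/2)^2 = Q(z)$ with $Q$ as in \eqref{QD3}, for $z \notin \supp(\mu)$. Rationality of $Q$ follows by writing $V' = p/q$: the divided difference $\frac{V'(z)-V'(s)}{z-s}$ is rational in $z$ with poles only at zeros of $q$, so its integral against $\mu$ is meromorphic on $\mathbb C \cup \{\infty\}$, whence $Q$ is rational. Since $\supp(\mu)$ has planar Lebesgue measure zero, the identity then holds $m_2$-a.e.

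\textbf{Part (b).} On each smooth arc of $\Sigma = \supp(\mu)$ the equation in (a) forces the boundary values $C^\mu_\pm - V'/2$ to be the two branches $\pm Q^{1/2}$ of the square root of $Q$. Sokhotski-Plemelj applied to $C^\mu$ gives $C^\mu_+ - C^\mu_- = -2\pi i f$ where $d\mu = f(s) ds$ is the density against the complex line element; choosing the branch appropriately produces $d\mu(s) = \frac{1}{\pi i} Q(s)^{1/2} ds$. Positivity of $\mu$ forces $Q(s)(ds)^2 < 0$ along $\Sigma$, which is the defining condition for $\Sigma$ to be a union of trajectories of $-Q\,dz^2$. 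Maximality follows because $f$ is nonzero away from the zeros of $Q$, so each trajectory piece of $\Sigma$ must extend up to either a zero of $Q$ or to $\infty$.

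\textbf{Parts (c) and (d).} Using $-2\partial_z U^\mu = C^\mu$ and $2\partial_z \Re V = V'$ one gets $2\partial_z(2U^\mu + \Re V) = -2(C^\mu - V'/2)$, whose boundary values on $\Sigma$ are $\mp 2 Q^{1/2}$. Because $2U^\mu + \Re V$ is real, its derivative in any real direction at a point of $\Sigma$ equals $2\Re$ of its $\partial_z$-derivative times that complex direction. The tangential derivative along a parametrization $z(t)$ of $\Sigma$ is then $\mp 2\Re(Q(z(t))^{1/2} z'(t))$, which vanishes on a trajectory precisely because $Q^{1/2} z'$ is purely imaginary there; integrating along each connected component gives the constants $c_j$ of \eqref{Sprop1}. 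For (d), the outward normal derivatives from the two sides compute to $\pm 2 \Im(Q^{1/2} z')/|z'(t)|$, and once the two normals are taken to point to opposite sides of $\Sigma$ the sign flip is absorbed by the reversal of normal direction, yielding \eqref{Sprop2}.

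\textbf{Main obstacle.} The principal care lies in the consistent bookkeeping of the branches of $Q^{1/2}$ together with orientations of $\Sigma$ and of its normals: the sign in Sokhotski-Plemelj, the sign forced by positivity of $\mu$, the trajectory condition $Q z'^2 < 0$, and the normal orientation in \eqref{Sprop2} must all be matched up. A secondary technical point is the rigorous use of the non-compactly-supported test function $h(s) = 1/(z-s)$ in \eqref{CM1}, which one handles by an approximation with $C^1_c$ cut-offs and dominated convergence for the double integral, using that $\mu$ is a finite positive measure with support bounded away from $z$.
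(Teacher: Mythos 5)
The paper does not prove this theorem; it simply cites Theorem~5.1, Lemma~5.3 and Lemma~5.4 of \cite{MFR11}, so there is no in-paper proof to compare against. Your sketch is a reasonable reconstruction of the cited argument, and the algebraic backbone is correct: the choice $h(s)=1/(z-s)$, the divided-difference identity $\frac{h(s)-h(t)}{s-t}=\frac{1}{(z-s)(z-t)}$, completion of the square, the Plemelj/positivity bookkeeping yielding $d\mu=\frac{1}{\pi i}Q^{1/2}\,ds$ on trajectories of $-Q\,ds^2$, and the derivation of (c)–(d) from the real and imaginary parts of $g_++g_--V$.

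There are, however, two genuine gaps that you gloss over and that are precisely the nontrivial content of the cited results. First, your route to the $m_2$-a.e.\ statement in (a) is to establish \eqref{QD2} off $\supp(\mu)$ and then assert ``since $\supp(\mu)$ has planar Lebesgue measure zero''. That assertion is unjustified \emph{a priori}: a critical measure could in principle be supported on a set of positive planar measure (Lebesgue measure on a disk, for instance, has finite logarithmic energy), and proving that the support is thin is part of what the theorem establishes. The correct route, as in \cite[Lemma~5.1]{MFR11}, derives $C^\mu(z)^2=V'(z)C^\mu(z)-\int\frac{V'(z)-V'(s)}{z-s}\,d\mu(s)$ directly for $m_2$-a.e.\ $z$, including $z\in\supp(\mu)$, using that the Cauchy transform is defined and finite $m_2$-a.e.\ and approximating $1/(z-s)$ by cutoffs near the (possibly non-isolated) singularity at $s=z$. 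Your own closing remark on the approximation argument explicitly assumes $\supp(\mu)$ is bounded away from $z$, so it only covers the easy case $z\notin\supp(\mu)$. Second, your part (b) begins ``on each smooth arc of $\Sigma=\supp(\mu)$'', thereby assuming the support is already a finite union of analytic arcs. This is circular: that structural statement is the substantive conclusion of \cite[Lemma~5.3]{MFR11}, which requires an argument (from the algebraic identity $(C^\mu-V'/2)^2=Q$ a.e.\ and general Cauchy-transform theory) to show that $\mu$ has no two-dimensional part and that $\supp(\mu)$ is a finite union of analytic arcs terminating at zeros of $Q$. Once that structure is granted, the Plemelj computation and the sign considerations you give do the rest correctly.
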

	\begin{proof} See Theorem~5.1, Lemma~5.3 and Lemma~5.4 in
		\cite{MFR11}.
	\end{proof}

The identity \eqref{Sprop2} is known as the $S$-property
in the external field $\Re V$ and $\Sigma$ is called an $S$-contour or $S$-curve
in the external field $\Re V$. Together with \eqref{Sprop1}, it
implies that the $g$-function
\[ g(z) = \int \log(z-s) d\mu(s) \]
satisfies
\begin{equation} \label{Sprop3} 
	g_+(z) + g_-(z) - V(z) = \ell_j \qquad \text{ on } \Sigma_j 
\end{equation}
for a complex constant $\ell_j$ that can be different
for each connected component $\Sigma_j$ of $\Sigma$. Indeed, the
real part of $g_+ + g_- - V$ is constant on each
component by \eqref{Sprop1}, while \eqref{Sprop2} implies
that the imaginary part is constant on each component as well,
as can be seen from an application of the Cauchy-Riemann equations. 

\subsection{Equilibrium measures}

For a compact set $F \subset \mathbb C$ the equilibrium
measure in the external field $\varphi = \Re V$ is the probability
measure on $F$ that minimizes the functional \eqref{EVmu} among 
probability measures on $F$, see e.g.\ \cite{Dei99, Ran95, ST97}. 
If $F$ is a continuum (i.e.,
a compact connected set with more than one point),
or a union of continua, and $\Re V$ is bounded from
below on $F$, then there is a unique 
equilibrium measure $\mu^F$ in the external field $\Re V$. 
It satisfies
\begin{align}  \label{EMplane1}
	2 U^{\mu^F}(z) + \Re V(z) & = c,  \quad \text{ on } \supp(\mu^F), \\ \label{EMplane2}
	2 U^{\mu^F}(z) + \Re V(z) & \geq c, \quad \text{ on } F,
\end{align} 
for some constant $c$. We emphasize that $c$ is the same
on all connected components of $\supp(\mu^F)$, which is
in contrast to the situation in Theorem~\ref{theo:MFR} (c).
For a critical measure the constant can be different on 
each connected component of its support.
We are interested in equilibrium measures that are also 
critical measures, and, given $V$, this will depend on the 
choice of a good compact set $F$.

To determine such $F$, Kuijlaars and Silva \cite{KS15} introduced the notion 
of a critical set, based on the {\it extremal compact} considered by Rakhmanov in \cite{Rak12}. To describe it, we use 
\begin{equation} \label{criticalF1}
	E_\varphi(F) = E_\varphi[\mu^F]. 
\end{equation}

\begin{definition}
	Let $F$ be a compact set such that $\varphi = \Re V$ is
	bounded from below on $F$.
	Then $F$ is a \textbf{critical set} in the external field $\varphi$ if
	\begin{equation} \label{criticalF2} \lim_{\varepsilon \to 0} \frac{E_\varphi(F_{\varepsilon,h}) - E_\varphi(F)}{\varepsilon} = 0 
	\end{equation}
	for every $C^1$ function $h$ with compact support, 
	where  
	\begin{equation} \label{criticalF3} 
		F_{\varepsilon,h} = \{ x + \varepsilon h(x) \mid x \in F\}. 
	\end{equation}
\end{definition}
Note that \eqref{criticalF3} is the support of the deformed
measure $\mu_{\varepsilon,h}$ if $\supp(\mu) = F$.

Rakhmanov \cite{Rak12} essentially proved the following, see also \cite{KS15}.
\begin{proposition} \label{prop:CS}
	Let $F$ be a union of continua such that $\varphi = \Re V$ is
	bounded from below on $F$. Let $\mu^F$ be its equilibrium
	measure in the external field $\varphi$.
	\begin{enumerate}
		\item[\rm (a)] Then the limit in \eqref{criticalF2}
		exists and it is equal to the limit in \eqref{DVh}, i.e.,
		\[ \lim_{\varepsilon \to 0} \frac{E_\varphi(F_{\varepsilon,h}) - E_\varphi(F)}{\varepsilon}
		= \lim_{\varepsilon \to 0}  
		\frac{E_\varphi[\mu^{F}_{\varepsilon,h}] - E_\varphi[\mu^F]}{\varepsilon}. \]
		\item[\rm (b)] $F$ is a critical set if and only if 
		$\mu^F$ is a critical measure in the external field.
		\item[\rm (c)] Suppose $F$ belongs to a family $\mathcal F$
		of union of continua such that 
		for every $C^1$ function $h$ there is $\varepsilon_0 >0$	such that
		$F_{\varepsilon,h} \in \mathcal F$ for every $\varepsilon \in
		(-\varepsilon_0, \varepsilon_0)$, where $F_{\varepsilon,h}$ is given by \eqref{criticalF3}. Suppose also that  
		\begin{equation} \label{criticalF4}
			E_\varphi(F) = \max_{F'\in \mathcal F} E_\varphi(F'). 
		\end{equation}
		Then $F$ is a critical set and $\mu^F$ is a critical measure
		in the external field $\varphi$.
	\end{enumerate}
\end{proposition}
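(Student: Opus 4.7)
The plan is to establish part (a) by a sandwich argument; parts (b) and (c) then follow from (a) with little extra work. For the upper direction in (a), since $\mu^F_{\varepsilon,h}$ is a probability measure on $F_{\varepsilon,h}$, one has directly
\[ E_\varphi(F_{\varepsilon,h}) \leq E_\varphi[\mu^F_{\varepsilon,h}]. \]
For the matching lower direction I would use that $T_\varepsilon(s) := s + \varepsilon h(s)$ is a diffeomorphism for $|\varepsilon|$ small (since $h \in C^1$ has compact support), set $\nu_\varepsilon := (T_\varepsilon^{-1})_\ast \mu^{F_{\varepsilon,h}}$, which is a probability measure on $F$, and exploit $(T_\varepsilon)_\ast \nu_\varepsilon = \mu^{F_{\varepsilon,h}}$ together with minimality $E_\varphi[\nu_\varepsilon] \geq E_\varphi(F)$ to obtain
\[ E_\varphi(F_{\varepsilon,h}) - E_\varphi(F) \geq E_\varphi[(\nu_\varepsilon)_{\varepsilon,h}] - E_\varphi[\nu_\varepsilon]. \]

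Next I would prove a uniform Taylor expansion: expanding $\log|T_\varepsilon(s)-T_\varepsilon(t)|$ and $\Re V(T_\varepsilon(s))$ in $\varepsilon$ yields
\[ E_\varphi[(T_\varepsilon)_\ast \nu] - E_\varphi[\nu] = \varepsilon L(\nu,h) + O(\varepsilon^2), \]
with
\[ L(\nu,h) = -\Re \left[ \iint \frac{h(s)-h(t)}{s-t} d\nu(s) d\nu(t) - \int h(s) V'(s) d\nu(s) \right], \]
the very expression already identified after Definition~\ref{definition11}, and with the remainder uniform in probability measures $\nu$ supported on a fixed compact neighborhood of $F$. Applying this with $\nu=\mu^F$ on the upper side and $\nu=\nu_\varepsilon$ on the lower side, then dividing by $\varepsilon$ and letting $\varepsilon \to 0^{\pm}$, the sandwich collapses to $L(\mu^F,h)$ provided $\nu_\varepsilon \to \mu^F$ weakly and $L(\cdot,h)$ is weakly continuous. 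The former holds because $T_\varepsilon \to \mathrm{id}$ uniformly and equilibrium measures depend continuously on their support under Hausdorff convergence; the latter holds because for $h \in C^1$ the kernel $(h(s)-h(t))/(s-t)$ extends continuously across the diagonal to $h'(t)$. This proves (a).

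Part (b) is then immediate: by (a) the limit \eqref{criticalF2} defining the critical set $F$ coincides with the limit \eqref{DVh} defining the critical measure $\mu^F$, so one vanishes if and only if the other does. For part (c), the hypothesis $F_{\varepsilon,h} \in \mathcal{F}$ for $|\varepsilon| < \varepsilon_0$ combined with \eqref{criticalF4} makes $\varepsilon \mapsto E_\varphi(F_{\varepsilon,h})$ a real-valued function with a local maximum at $\varepsilon = 0$; since it is differentiable there by (a), its derivative vanishes, so $F$ is a critical set and, by (b), $\mu^F$ is a critical measure.

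The step I expect to be the hardest is the uniform $O(\varepsilon^2)$ control in the Taylor expansion, especially if poles of $V'$ lie near $F$. Boundedness of $\Re V$ from below on $F$ prevents $V$ itself from having genuine poles on $F$, so $V'$ and $V''$ remain bounded on a small neighborhood of $F$, and combined with the $C^1$-bounds on $h$ this yields the required uniformity; this is technical but standard potential theory.
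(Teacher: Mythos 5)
Your overall strategy is the same as the paper's (its proof is delegated to Proposition~\ref{prop:CShigher}, the higher-genus analogue): a sandwich involving the four measures $\mu^F$, $\mu^F_{\varepsilon,h}$, $\mu^{F_{\varepsilon,h}}$ and $(\mu^{F_{\varepsilon,h}})_{-\varepsilon,h}$ (your $\nu_\varepsilon$), combined with a uniform first-order expansion of $E_\varphi[(T_\varepsilon)_\ast\nu]$ in $\varepsilon$ over all probability measures $\nu$ on a fixed compact. The two inequalities you set up are exactly the paper's \eqref{CShigher5}--\eqref{CShigher6}, and your ``uniform Taylor expansion'' is the paper's Lemma~\ref{lem:lemma46}.

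The genuine gap is the step where the sandwich is collapsed. You invoke, without proof, that ``equilibrium measures depend continuously on their support under Hausdorff convergence'' to get $\nu_\varepsilon \to \mu^F$ weakly. That assertion carries almost all of the technical weight, it is not a black-box fact in weighted potential theory (indiscriminate Hausdorff approximation of a set can change its equilibrium measure discontinuously), and its proof is of the same depth as the statement you are trying to establish. The paper deliberately does \emph{not} appeal to such a lemma: instead, it deduces $\mu^{F_{\varepsilon,h}} \to \mu^F$ directly from the sandwich itself. Concretely, the two inequalities plus the uniform expansion give $0 \leq E_\varphi[\mu^F_{\varepsilon,h}] - E_\varphi[\mu^{F_{\varepsilon,h}}] \to 0$; the convexity of the energy via the midpoint identity $E[\mu_\varepsilon - \mu^\varepsilon] = 2E_\varphi[\mu_\varepsilon] + 2E_\varphi[\mu^\varepsilon] - 4E_\varphi[(\mu_\varepsilon+\mu^\varepsilon)/2]$ and minimality of $\mu^\varepsilon$ then force $E[\mu_\varepsilon - \mu^\varepsilon] \to 0$; combined with $\mu_\varepsilon \to \mu$ in energy norm (Lemma~\ref{lem:lemma45}) this yields $\mu^{F_{\varepsilon,h}} \to \mu^F$ in energy norm, hence weak-$*$, which is what you need for $L(\nu_\varepsilon,h) \to L(\mu^F,h)$. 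You should either reproduce this convexity argument or supply a full proof of Hausdorff continuity of the weighted equilibrium map for the class of sets involved; as written, the hard part has been asserted rather than proved.

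Two minor points. For $h \in C^1$ the remainder in your expansion is only $o(\varepsilon)$, not $O(\varepsilon^2)$ (that would need $C^{1,1}$ or $C^2$ control on $h$); fortunately $o(\varepsilon)$ suffices for the sandwich. Your handling of parts (b) and (c) from (a) is correct and matches the paper.
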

\begin{proof} The proof of part (a) follows from the property that  
	\begin{equation} \label{CSisCM} E_\varphi(F_{\varepsilon,h}) - E_\varphi[\mu^F_{\varepsilon,h}] = o(\varepsilon) \quad
		\text{ as } \varepsilon \to 0, \end{equation}
	see \cite[Section 4]{KS15} or \cite[Section 9.10]{Rak12} for details, and see also 
	Proposition~\ref{prop:CShigher} below where the analogous
	statement is proved in the higher genus case.
	Part (b) is immediate from part (a) and the definitions of
	critical measure and critical set. 
	If $F$ satisfies the conditions of part (c), then 
	$E_\varphi(F_{\varepsilon, h})$ has a local maximum at 
	$\varepsilon = 0$ for every $C^1$ function $h$. 
	Hence \eqref{criticalF2} holds and $F$ is a critical set.
	Then also $\mu^F$ is a critical measure by part (b).
\end{proof}
Part (c) shows that one may find critical sets from solving a
max-min energy problem
\[ \max_{F \in \mathcal F}  \min_{\supp(\mu) \subset F}  E_\varphi[\mu] \]
where the minimum is over probability measures on $F$ and
the maximum is over a suitable family $\mathcal F$ satisfying
the condition of part (c) of Proposition~\ref{prop:CS}.
If a maximizer $F$ exists then clearly \eqref{criticalF4}
holds and $\mu^F$ is a critical measure in the external field $\varphi$. Then Theorem~\ref{theo:MFR} applies and it follows
in particular that the support of $\mu^F$ is a union of
maximal trajectories of a quadratic differential.  

It can be shown that under suitable conditions, a maximizer indeed exists.
This program was first carried out by Kamvissis and Rakhmanov \cite{KR05} in the case of weighted Green's energy in the upper half plane, and later by Rakhmanov \cite{Rak12} in the case of weighted logarithmic energy in the complex plane; see also the work of Kuijlaars and Silva \cite{KS15} for the case of a polynomial $V$.
Our extension to higher genus (see Theorem~\ref{thm:residues} below and its
proof in Section~\ref{sec:maxmin}) follows this approach.

There is a substantial literature on determining 
critical equilibrium measures in the external field $\Re V$
where $V$ is a polynomial. This is motivated in part by
questions in random matrix theory, see the recent papers
\cite{BBDY22,BBGMT22,BGM21} and references cited therein.  
See also \cite{MFS19} for an extension to vector equilibrium measures.

\section{Potential theory on Riemann surfaces}
\label{sec:general-genus}

Before we can state the results of this paper we need
to introduce certain notions from potential theory on a higher genus 
Riemann surface, see also Skinner \cite{Ski15} and 
a recent series of papers by Chirka \cite{Chi18,Chi19,Chi20}.
Our exposition will focus on equilibrium measures in an external field.
 
Throughout, we use $X$ to denote a compact Riemann surface with a distinguished point $p_\infty$, which 
we refer to as the point at infinity.

\subsection{Bipolar Green's function} 

To extend Theorem~\ref{theo:MFR} we first of all need
the appropriate  analogue of the logarithmic kernel
that appears in \eqref{EVmu} to define the logarithmic
energy. This is provided by the bipolar Green's function.

\begin{proposition} \label{prop:bipolarGreen}
	Let $X$ be a compact Riemann surface with a
	distinguished point $p_{\infty} \in X$. 
	There is a function $(p,q) \mapsto G(p,q)$ defined
	on $X \times X$ such that
	\begin{enumerate}
		\item[\rm (a)] for a fixed $q \in X \setminus \{p_{\infty} \}$
		the function $p \mapsto G(p,q)$ is real-valued and harmonic
		on $X \setminus \{ p_{\infty}, q\}$,
		\item[\rm (b)] if $z$ is a local coordinate at $q$, then
		\begin{equation}  \label{Green2}
			G(p,q) = -\log|z(p)| + O(1) \quad \text{ as } p \to q, \end{equation}
		\item[\rm (c)] if $z_{\infty}$ is a local coordinate at
		$p_{\infty}$, then 
		\begin{equation} \label{Green3} 
			G(p,q) =  \log |z_{\infty}(p)| + O(1)
			\quad \text{ as } p \to p_{\infty}, \end{equation}
		\item[\rm (d)] $G(p,q) = G(q,p)$.
	\end{enumerate}
\end{proposition}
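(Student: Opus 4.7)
The plan is to reduce the construction to the Green's function of the Laplacian on $X$. Equip $X$ with any Riemannian metric compatible with its complex structure, and let $dV$ denote the associated volume form, normalized so that $\int_X dV = 1$. Classical Hodge theory on the closed surface $X$ provides a Green's function $g\colon X\times X \to \mathbb{R}$ that is smooth off the diagonal, symmetric ($g(p,q)=g(q,p)$), has the local behavior $g(p,q) = -\log|z(p)| + O(1)$ as $p\to q$ in any local coordinate $z$ with $z(q)=0$, and satisfies the distributional identity $\Delta_p g(\cdot,q) = -2\pi\delta_q + 2\pi$. Its existence comes from solving the Poisson equation $\Delta u = \delta_q - 1$, which is solvable because the right-hand side has zero total integral, and its symmetry from Green's identity applied to $g(\cdot,p)$ and $g(\cdot,q)$ after excising small disks around $p$ and $q$ and letting the radii tend to $0$.

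Granted such a $g$, I would define the bipolar Green's function by
\begin{equation*}
G(p,q) := g(p,q) - g(p,p_\infty) - g(q,p_\infty).
\end{equation*}
Property (d) is immediate from the symmetry of $g$. For (a) one computes $\Delta_p G(p,q) = -2\pi\delta_q + 2\pi\delta_{p_\infty}$: the two additive constants $+2\pi$ arising from $\Delta_p g(\cdot,q)$ and $-\Delta_p g(\cdot,p_\infty)$ exactly cancel, and $g(q,p_\infty)$ is constant in $p$; hence $G(\cdot,q)$ is harmonic on $X\setminus\{q,p_\infty\}$. For (b), near $p=q$ only the term $g(p,q)$ is singular, so $G(p,q) = -\log|z(p)| + O(1)$. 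For (c), near $p=p_\infty$ only the term $-g(p,p_\infty)$ is singular, contributing $+\log|z_\infty(p)| + O(1)$ as required by \eqref{Green3}.

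The main obstacle, and really the only non-trivial content, is the construction of the scalar Green's function $g$ itself. All three of its key features require care: existence rests on the Fredholm alternative for $\Delta$ together with the integrability condition $\int_X(\delta_q - 1)\,dV = 0$; the precise logarithmic singularity requires a parametrix argument that subtracts a cutoff of $-\log|z|$ near $q$ before inverting $\Delta$ on functions with zero mean; and the symmetry $g(p,q) = g(q,p)$ is obtained from Green's identity with appropriate excision around the two singular points. As an alternative route avoiding the Riemannian metric altogether, one can appeal to Riemann--Roch to produce a meromorphic differential on $X$ with simple poles of residues $+1$ at $q$ and $-1$ at $p_\infty$, then add a holomorphic differential so that all periods are purely imaginary (possible because the real-period map on the space of holomorphic differentials is a real-linear isomorphism onto $\mathbb{R}^{2g}$), and finally set $G(p,q)$ equal to the negative real part of its primitive; here (a)--(c) are immediate, and (d) becomes a Riemann bilinear relation between the two such differentials indexed by $p$ and $q$.
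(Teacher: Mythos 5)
Your primary construction is essentially identical to the one in the paper's appendix: the paper also equips $X$ with a conformal Riemannian metric, takes the symmetric Green's function $\tilde G$ of the Laplace--Beltrami operator (cited from Aubin, Theorem~4.13, rather than re-derived), and sets $G(p,q) = 2\pi[\tilde G(p,q) - \tilde G(p,p_\infty) - \tilde G(q,p_\infty)]$, verifying (a)--(d) exactly as you do (with Weyl's lemma invoked to pass from weak to genuine harmonicity in (a), a step worth making explicit). Your sketched alternative via a third-kind differential with purely imaginary periods and the Riemann bilinear relations is a legitimate different route, but the route you actually carry out is the paper's.
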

As usual, a local coordinate at a point on a Riemann surface 
means a holomorphic chart in which the point corresponds to 
$0 \in \mathbb C$.

\begin{proof}
	The existence of $G$  satisfying
	parts (a), (b) and (c) of Proposition~\ref{prop:bipolarGreen}
	can be found in Gamelin \cite{Gam01}, Simon \cite[Section 3.8]{Sim15}. We could not find an appropriate reference for part (d), although it may be known.
	The parts (a), (b) and (c) determine $p \mapsto G(p,q)$ up to an additive
	constant that may depend on $q$. It is noted by 
	Skinner \cite[p.~25]{Ski15}
	that it is not immediately obvious how to choose the constant
	such that $G$ is symmetric. 
	We give a proof in the appendix.
\end{proof}

\begin{definition} \label{def:bipolarG}
	The function $G$ satisfying
	the conditions of Proposition~\ref{prop:bipolarGreen}
	is called the \textbf{bipolar Green's function} 
	with one pole at $p_{\infty}$, or simply 
	bipolar Green's function.
\end{definition}
The bipolar Green's function is unique
up to an additive constant, but the constant will not be 
important for us. Note that $G(p,q) \to -\infty$ as 
$p \to p_{\infty}$ and $G(p,q) \to +\infty$ as $p \to q$.

\begin{example} \label{example23}
	(a)
	If $X$ is the Riemann sphere with $p_{\infty}$ the
	point at infinity, then $G(p,q) = \log \frac{1}{|p-q|}$.
	\medskip
	
	(b)
	On a complex torus $X = \mathbb C \slash \Lambda$
	with lattice $\Lambda = \mathbb Z + \tau \mathbb Z$ and
	$\Im \tau > 0$,
	the bipolar Green's function with pole
	at $p_{\infty} = 0$ (modulo $\Lambda$) is explicitly
	given by, see e.g.~\cite{Chi18},
	\cite[Section 2]{KM17}, \cite{Ski15},
	\begin{equation} \label{Green1} 
		G(p,q) = \log \left| \frac{\theta_1(p) \theta_1(q)}{\theta_1(p-q)} \right|
		- \frac{2\pi}{\Im \tau} \left(\Im p\right) \left(\Im q\right) 
	\end{equation}
	in terms of the Jacobi elliptic function $\theta_1$ that has a zero 	at $0$ (see \cite[Chapter 20]{DLMF}
	which however uses a different scaling of elliptic functions 
	with periods $\pi$ and $\pi \tau$
	instead of $1$ and $\tau$), i.e.,
	\begin{equation} \label{Jacobi1}
		\theta_1(z) = \theta_1(z;\tau)
		= - i \sum_{k=-\infty}^{\infty}
		(-1)^k e^{\pi i \tau (k+\frac{1}{2})^2 +(2k+1)\pi i z}.
	\end{equation}
	The following properties of $\theta_1$ can be found in \cite[Chapter 20]{DLMF}. 
	The Jacobi elliptic function is an odd entire function
	with simple zeros at every lattice point, and no other zeros.
	Moreover, it has the quasi-periodicity properties
	\begin{equation} \label{Jacobi1periods} 
		\theta_1(z+1) = - \theta_1(z),
	\qquad \theta_1(z+\tau) =  - e^{-\pi i \tau - 2\pi iz} \theta_1(z). 
	\end{equation}
	
	We can use \eqref{Green1} to construct the bipolar Green's function on an arbitrary genus one Riemann surface $X$,
	by composing it with the Abel map from $X$ to $\mathbb C \slash \Lambda$
	that maps $p_{\infty}$ to $0$.
\end{example}

The local behavior of $G(p,q)$ is determined by 
the logarithmic kernel $\log \frac{1}{|p-q|}$ in the following
sense.

\begin{lemma} We have
	\begin{enumerate}
		\item[\rm (a)] 	If $z_{\infty}$ is a local coordinate at $p_\infty$
		(that maps $p_{\infty}$ to $0$) 	then we have 
		\begin{equation} \label{Gnearinfty} 
			G(p,q) - \log \frac{1}{|z_{\infty}(p)^{-1}-z_{\infty}(q)^{-1}|} =  O(1)
		\end{equation}
		uniformly for $p,q$ in a neighborhood of $p_\infty$.
		\item[\rm (b)] 	If $z_0$ is a local coordinate at $p_0 \neq p_{\infty}$ (that maps $p_0$ to $0$) 
		then we have
		\begin{equation} \label{Gnearp0} 
			G(p,q) - \log \frac{1}{|z_0(p)-z_0(q)|} = O(1) 
		\end{equation}
		uniformly for $p,q$ in a neighborhood of $p_0$ that does not contain $p_\infty$.
	\end{enumerate}
\end{lemma}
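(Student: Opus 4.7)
The plan in both parts is to verify that, after subtracting off the appropriate logarithmic kernel, the remainder $H$ is harmonic in each variable (including across its apparent singular points) and then to use a Poisson integral representation on the boundary of a larger neighborhood to deduce a uniform bound on a smaller one.

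For part (b), choose a neighborhood $U$ of $p_0$ small enough that $\overline{U}$ misses $p_{\infty}$ and $z_0$ identifies $U$ with an open disk in $\mathbb C$. Define
\[
H(p,q) = G(p,q) + \log|z_0(p) - z_0(q)|, \qquad p,q \in U,\ p \neq q.
\]
For fixed $q \in U$, the two summands are harmonic in $p$ on $U \setminus \{q\}$, and property (b) of Proposition~\ref{prop:bipolarGreen} (applied with the local coordinate $z(p) = z_0(p) - z_0(q)$ at $q$) guarantees that their sum stays bounded as $p \to q$. So $H(\,\cdot\,,q)$ extends to a harmonic function on $U$, and by the symmetry property (d), $H(p,\,\cdot\,)$ is harmonic on $U$ as well. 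Pick $U' \Subset U$. Writing the Poisson integral for $H(\,\cdot\,,q)$ on $U$ and noting that for $p' \in \partial U$ and $q \in U'$ the pair $(p',q)$ avoids both the diagonal and $p_\infty$, the boundary integrand is jointly continuous, hence bounded. Since the Poisson kernel is also bounded on $U' \times \partial U$, this gives $|H| \leq C$ on $U' \times U'$, which is \eqref{Gnearp0}.

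For part (a), the key is the algebraic identity
\[
\log \frac{1}{|z_{\infty}(p)^{-1} - z_{\infty}(q)^{-1}|} = -\log|z_{\infty}(p) - z_{\infty}(q)| + \log|z_{\infty}(p)| + \log|z_{\infty}(q)|,
\]
which lets us rewrite the claim as boundedness of
\[
\tilde H(p,q) = G(p,q) + \log|z_{\infty}(p) - z_{\infty}(q)| - \log|z_{\infty}(p)| - \log|z_{\infty}(q)|.
\]
The three subtracted terms are arranged precisely to cancel the three singular behaviors of $G$ in a neighborhood of $(p_\infty, p_\infty)$: the $-\log|z_{\infty}(p)|$ kills the pole at $p = p_\infty$ coming from property (c), the $-\log|z_{\infty}(q)|$ kills the symmetric pole at $q = p_\infty$ via property (d), and $+\log|z_{\infty}(p) - z_{\infty}(q)|$ kills the diagonal pole from property (b), using $z_{\infty} - z_{\infty}(q)$ as a local coordinate at $q$ within the $z_{\infty}$-chart. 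Hence, for each fixed $q$ in the chart (even for $q = p_\infty$), the function $p \mapsto \tilde H(p,q)$ is harmonic off $\{p_\infty, q\}$ and bounded at each of those two points, so it extends harmonically. The same symmetry and Poisson-integral argument as in part (b) then yields the desired uniform $O(1)$ bound on a smaller neighborhood.

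The only slightly delicate point is the upgrade from separate harmonicity plus pointwise boundedness at the singularities to a joint uniform bound. The Poisson representation sidesteps the general Hartogs-type machinery: one only needs joint continuity of $H$ (respectively $\tilde H$) for a boundary $p'$ paired with an interior $q$, and there no singularities of $G$ can appear, so this follows directly from the continuity of $G$ off its exceptional set.
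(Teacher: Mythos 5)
Your proof is correct and follows essentially the same route as the paper's: both arguments observe that the logarithmic singularities of $G$ at $p=q$ (and, in part (a), at $p=p_\infty$ and $q=p_\infty$) are exactly cancelled by the subtracted kernel, so the difference is harmonic in each variable and hence bounded. The paper states the final boundedness without elaboration, whereas you supply the Poisson-integral argument that upgrades separate harmonicity plus continuity of the boundary data to a uniform joint bound; this is a worthwhile clarification of a step the paper leaves implicit, but not a different method.
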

\begin{proof}
	The two terms in the left hand sides of \eqref{Gnearinfty} 
	and \eqref{Gnearp0}
	have the same logarithmic singularities when $p=q$.
	The two terms in \eqref{Gnearinfty} also have
	the same logarithmic singularity when $p= p_{\infty}$
	and by symmetry of $G$ also when $q=p_{\infty}$. 
	Therefore the differences in both \eqref{Gnearinfty} and
	\eqref{Gnearp0} are harmonic in both variables 
	near $p_{\infty}$ and $p_0$, respectively,
	and therefore they remain bounded.
\end{proof}

\subsection{Equilibrium measure in external field}\label{sec:eqmeas}

The bipolar Green's function allows us to extend  concepts of logarithmic potential theory in the complex plane \cite{Dei99,Ran95,ST97} 
to compact Riemann surfaces.

\begin{definition} 
	Let $\mu$ be a measure on $X$ with  
	compact support in $X \setminus \{p_{\infty}\}$.
	Then we define its \textbf{bipolar Green's energy}
	\begin{equation} \label{GreenE0}
		E \left[\mu \right] = \iint G(p,q)d\mu(p) d\mu(q).
	\end{equation}
	For a lower semi-continuous $\varphi : \supp(\mu) \to \mathbb R \cup \{+\infty\}$ we define the
	bipolar Green's energy of $\mu$ in the external field $\varphi$ by
	\begin{equation} \label{GreenE1} 
		E_\varphi\left[\mu\right] = \iint G(p,q) d\mu(p) d\mu(q) 
		+ \int \varphi d\mu. \end{equation}
\end{definition}

We extend the definition~\eqref{GreenE0} 
to signed measures $\nu = \mu_1 -\mu_2$, provided $E[\mu_1]$ 
and $E[\mu_2]$ are finite. 
We need the crucial property
\begin{proposition} \label{prop:Gposdef}
	If $\mu_1, \mu_2$ are two compactly supported 
	probability measures on $X
	\setminus \{p_{\infty}\}$
	with finite bipolar Green's energy and $\nu = \mu_1 - \mu_2$, then
	\begin{equation} \label{Gposdef} 
		E[\nu] = \iint G(p,q) d\nu(p) d\nu(q) \geq 0, \end{equation}
	and $E[\nu]= 0$ if and only if $\mu_1 = \mu_2$.
\end{proposition}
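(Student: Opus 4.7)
The plan is to represent $E[\nu]$ as a nonnegative Dirichlet energy of its potential. Fix an auxiliary smooth conformal metric on $X$, with Laplace--Beltrami operator $\Delta$ and area form $dA$ (the final quantity will be conformally invariant). The harmonicity and singularity properties in Proposition~\ref{prop:bipolarGreen} are equivalent to the distributional identity
\[ \Delta_p G(p,q) = -2\pi\,\delta_q + 2\pi\,\delta_{p_\infty}. \]
Define the bipolar potential $U^\nu(p) = \int G(p,q)\,d\nu(q)$. Since $\nu(X) = \mu_1(X) - \mu_2(X) = 0$, the $\delta_{p_\infty}$-contribution cancels upon integration against $\nu$, so $U^\nu$ is bounded near $p_\infty$, harmonic on $X \setminus \supp(\nu)$, and satisfies $\Delta U^\nu = -2\pi\nu$ globally in the distributional sense.

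I would first prove the inequality under the stronger assumption that $\mu_1$ and $\mu_2$ have smooth densities with respect to $dA$. Then $U^\nu \in C^\infty(X)$, and integration by parts on the closed surface $X$ gives
\[ E[\nu] = \int_X U^\nu\, d\nu = -\frac{1}{2\pi}\int_X U^\nu\,\Delta U^\nu\,dA = \frac{1}{2\pi}\int_X |\nabla U^\nu|^2\,dA \geq 0. \]
Equality forces $\nabla U^\nu \equiv 0$; since $X$ is connected this makes $U^\nu$ constant, whence $\nu = -(2\pi)^{-1}\Delta U^\nu = 0$, i.e.\ $\mu_1 = \mu_2$.

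For the general case with only finite energy, I would approximate by smooth measures. A convenient tool is the heat semigroup $(P_t)_{t>0}$ of $\Delta$: the measures $\mu_{i,t} := P_t\mu_i$ are smooth probability densities, $\mu_{i,t}\to\mu_i$ weakly as $t\downarrow 0$, and the mass constraint is preserved, so $\nu_t := \mu_{1,t}-\mu_{2,t}$ still has zero total mass. The smooth case yields $E[\nu_t]\geq 0$. Splitting
\[ E[\nu_t] = E[\mu_{1,t}] + E[\mu_{2,t}] - 2\iint G(p,q)\,d\mu_{1,t}(p)\,d\mu_{2,t}(q), \]
the mixed term converges to $2\iint G\,d\mu_1\,d\mu_2$ by weak convergence and integrability of the logarithmic singularity of $G$. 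For the diagonal terms one shows $E[\mu_{i,t}] \to E[\mu_i]$ via the spectral representation of $G$ in Laplacian eigenfunctions, which makes $t\mapsto E[P_t\mu_i]$ monotone and continuous on $(0,\infty)$, with limit $E[\mu_i]$ at $t=0$ precisely when $\mu_i$ has finite energy. Passing to the limit gives $E[\nu]\geq 0$. If $E[\nu]=0$ then $\|\nabla U^{\nu_t}\|_{L^2(dA)}\to 0$, so $U^{\nu_t}$ converges in $W^{1,2}(X)$ modulo constants to a constant function; hence $\Delta U^{\nu_t} = -2\pi\nu_t \to 0$ in distributions, forcing $\nu = 0$.

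The main obstacle is the regularization step: one must simultaneously preserve the mass-zero condition (on which the cancellation of the $p_\infty$-singularity depends) and control the convergence of the quadratic energy for measures that are only assumed to have finite bipolar Green's energy rather than any a priori smoothness. The cleanest way to handle this is through the spectral expansion of $G$ in $\Delta$-eigenfunctions, where the inequality becomes the visibly nonnegative sum $\sum_{n\geq 1} \lambda_n^{-1}|\hat\nu_n|^2$, and the equality case reduces to the vanishing of all Fourier coefficients of $\nu$.
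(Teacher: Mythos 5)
Your argument is correct and is exactly what the paper has in mind: the paper does not prove this proposition itself but cites the two known proofs --- one via eigenfunctions of the Laplace operator (Chirka, Skinner) and one via Sobolev/Dirichlet norms (Chirka, Kang--Makarov) --- and your proposal is a combination of precisely these two. The only step stated too quickly is the claim that the mixed term converges ``by weak convergence and integrability of the logarithmic singularity'': weak convergence alone does not control integrals of the unbounded kernel $G$ (which is $+\infty$ on the diagonal and $-\infty$ at $p_\infty$, and $P_t\mu_i$ instantly acquires full support, in particular near $p_\infty$); but your own remedy is the right one --- since $\nu$ has total mass zero, $E[\nu]=2\pi\iint\tilde G\,d\nu\,d\nu$ with $\tilde G$ the Riemannian Green's function (the $p_\infty$ cross terms cancel), and the spectral expansion then gives $E[\nu]=2\pi\sum_{n\ge1}\lambda_n^{-1}|\hat\nu_n|^2$ directly, with monotone convergence in $t$ handling the regularization and the equality case reducing to $\hat\nu_n=0$ for all $n$.
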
	
\begin{proof}
	A number of proofs are known in the literature. One proof
	involves the eigenfunctions of the Laplace operator
	as in \cite{Chi19} and \cite{Ski15}. Another proof
	relates  \eqref{Gposdef} to the norm
	of certain functions in a Sobolev space; see \cite{Chi19} and \cite{KM17}.
\end{proof}

Let $\mathcal E^1(K)$ denote the set of all probability
measures $\mu$ on $K$ with $E[\mu] < +\infty$.
If $\mu \in \mathcal E^1(K)$ and $(\mu_n)_n$ is a sequence 
in $\mathcal E^1(K)$ such that
\begin{equation}\label{energynorm}
	\lim_{n \to \infty} E[\mu_n-\mu] = 0,
\end{equation} 
then we say that $(\mu_n)_n$ converges to $\mu$ in 
\textbf{energy norm}. 
Convergence in energy norm
implies weak$^*$ convergence (see the second bullet point on \cite[p. 306]{Chi19}, or \cite[Theorem~7.3.10]{Hel14} for the case of potential theory in $\mathbb R^n$), i.e.,
\[ \lim_{n \to \infty} \int f d\mu_n = \int f d\mu \]
for every continuous $f$ on $K$. 

For a compact set $F \subset X \setminus \{p_{\infty}\}$ and a lower semi-continuous $\varphi : F \to \mathbb R \cup \{+\infty\}$, we denote
\begin{equation} \label{GreenEF}
	E_{\varphi}(F) = \inf_{\mu \in \mathcal E^1(F)}
	E_{\varphi}\left[\mu\right]. \end{equation}
\begin{definition} Let $F \subset X \setminus \{p_{\infty}\}$ be compact and let  $\varphi : F \to \mathbb R \cup \{+\infty\}$ $E_{\varphi}(F) < \infty$ be a lower semi-continuous function. 
	Suppose $E_\varphi(F) < +\infty$.
	Then a probability measure $\mu$ on $F$
	that satisfies $E_{\varphi}(F) = E_{\varphi}\left[\mu\right]$
	is called
	an \textbf{equilibrium measure} of $F$ in the external field $\varphi$, or simply an equilibrium measure. \end{definition}

There is only one equilibrium measure, so that we speak of the equilibrium measure of $F$ in the external field $\varphi$, as is part of the following proposition.

\begin{proposition} \label{prop:eqmeasure} Suppose $F \subset X \setminus \{p_\infty\}$
	is a compact set, and $\varphi : F \to \mathbb R \cup \{+\infty\}$ is lower semi-continuous with $E_{\varphi}(F) < +\infty$.
	Then there is a unique equilibrium measure $\mu^F$
	on $F$ in the external field $\varphi$. 
	The equilibrium measure is the
	unique probability measure $\mu$ on $F$ such that for
	some constant $c$, 
	\begin{align} \label{GreenEL1} 
		2 \int G(p,q) d\mu(q) + \varphi(p) & = c,
		\quad \text{q.e.\ on } \supp(\mu),  \\ \label{GreenEL2}
		2 \int G(p,q) d\mu(q) + \varphi(p) & \geq c,
		\quad \text{q.e.\ on } F.  
	\end{align}
	Here q.e.\ means quasi-everywhere, i.e., except for
	a set of zero capacity.
\end{proposition}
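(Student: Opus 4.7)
The overall strategy is to follow the classical Frostman program of logarithmic potential theory, now built on the bipolar Green's kernel via the positive-definiteness statement of Proposition~\ref{prop:Gposdef}. For existence I would use the direct method of the calculus of variations. Since $F \subset X \setminus \{p_{\infty}\}$ is compact, the kernel $G$ has its only singularity on the diagonal of $F \times F$ and is lower semi-continuous and bounded below there; a lower semi-continuous $\varphi$ on compact $F$ is likewise bounded below. Hence $E_{\varphi}[\mu] \geq -C$ uniformly on $\mathcal{E}^1(F)$, and $E_\varphi(F) > -\infty$. Writing $G$ and $\varphi$ as monotone suprema of continuous truncations $G_M = \min(G,M)$ and $\varphi_M = \min(\varphi,M)$ expresses $\mu \mapsto E_{\varphi}[\mu]$ as a supremum of weak$^*$ continuous functionals, so it is weak$^*$ lower semi-continuous. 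A minimizing sequence $(\mu_n) \subset \mathcal{E}^1(F)$ then admits a weak$^*$ subsequential limit $\mu^F$ on $F$ by Banach--Alaoglu, and lower semi-continuity forces $E_\varphi[\mu^F] = E_\varphi(F)$, which is finite by hypothesis; hence $\mu^F \in \mathcal{E}^1(F)$ is an equilibrium measure.

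Uniqueness is a consequence of strict convexity: if $\mu_1, \mu_2 \in \mathcal{E}^1(F)$ both minimize, expanding gives
\[
E_\varphi[(1-t)\mu_1 + t\mu_2] = (1-t)\, E_\varphi[\mu_1] + t\, E_\varphi[\mu_2] - t(1-t)\, E[\mu_1-\mu_2],
\]
and $\mu_1 \neq \mu_2$ would force $E[\mu_1-\mu_2] > 0$ by Proposition~\ref{prop:Gposdef}, contradicting minimality at any $t \in (0,1)$. For the variational conditions I would apply the standard Frostman argument to the unique minimizer $\mu^F$. Differentiating $(1-t)\mu^F + t\mu$ at $t = 0^+$ for an arbitrary $\mu \in \mathcal{E}^1(F)$ yields $\int \bigl( 2 U^{\mu^F}(p) + \varphi(p) \bigr)\, d(\mu - \mu^F)(p) \geq 0$, where $U^{\mu^F}(p) = \int G(p,q)\, d\mu^F(q)$. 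By choosing $\mu$ to move mass off the subset of $\supp(\mu^F)$ where $2 U^{\mu^F} + \varphi$ exceeds its $\mu^F$-essential supremum, or onto arbitrary Borel subsets of $F$, one extracts a single constant $c$ with $2 U^{\mu^F} + \varphi \geq c$ q.e.\ on $F$ and $\leq c$ on $\supp(\mu^F)$, which is \eqref{GreenEL1}--\eqref{GreenEL2}.

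For the converse half of the characterization, suppose $\mu \in \mathcal{E}^1(F)$ satisfies \eqref{GreenEL1}--\eqref{GreenEL2} with constant $c$. Using that finite-energy measures charge no set of zero capacity, I would integrate the equality against $\mu$ and the inequality against $\mu^F$ to obtain $2 E[\mu] + \int \varphi\, d\mu = c$ and $2 \iint G\, d\mu\, d\mu^F + \int \varphi\, d\mu^F \geq c$, and perform the symmetric computation with the roles of $\mu$ and $\mu^F$ interchanged (yielding the corresponding constant $c^F$). Summing the two equalities, summing the two inequalities, subtracting, and substituting the polarization $2 \iint G\, d\mu\, d\mu^F = E[\mu] + E[\mu^F] - E[\mu - \mu^F]$ gives $E[\mu - \mu^F] \leq 0$, so Proposition~\ref{prop:Gposdef} forces $\mu = \mu^F$. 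The genuinely technical point is the Frostman step and the translation of averaged inequalities into quasi-everywhere statements; this rests on the fact that the polar subsets of $X$ are precisely the common null sets of all finite-energy measures, which follows from the planar theory by a localization using coordinate charts together with the comparison of $G$ with $\log \frac{1}{|\cdot|}$ in \eqref{Gnearp0}--\eqref{Gnearinfty}.
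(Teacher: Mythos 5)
Your proposal is correct and follows essentially the same route as the paper, which simply defers to the classical Frostman argument of \cite[Theorem~I.1.3]{ST97} adapted to the bipolar Green's kernel via the positive-definiteness in Proposition~\ref{prop:Gposdef}; your existence-by-lower-semicontinuity, strict-convexity uniqueness, Frostman variational step, and the converse via integrating the variational conditions and polarizing are exactly that argument. Note only that your repeated use of $2\int G(p,q)\,d\mu(q)$ and of the polarization identity silently relies on the symmetry $G(p,q)=G(q,p)$, which is the one point the paper's proof explicitly flags.
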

\begin{proof}
With the aid of Proposition~\ref{prop:Gposdef}, the proof
is similar to the proof for equilibrium measures
in logarithmic potential theory given in \cite[Theorem~I.1.3]{ST97},
see also \cite[Section~2.3]{Chi19} or \cite[Lemma~3.4.5]{Ski15}. 

For the variational conditions \eqref{GreenEL1}
and \eqref{GreenEL2} it is important that $G$ is symmetric,
see Proposition~\ref{prop:bipolarGreen} (d), since otherwise
$2 \int G(p,q)d\mu(q)$ in \eqref{GreenEL1} and \eqref{GreenEL2}
should be replaced by
$\int \left(G(p,q) + G(q,p)\right) d\mu(q)$.
\end{proof}

In the present work we will have that $F$ is a connected compact
set with more than one point (i.e., a continuum), or a
finite disjoint union of such sets, and in
such a situation there are no exceptional sets of zero capacity, and \eqref{GreenEL1} and \eqref{GreenEL2} hold everywhere
on their respective sets.

\subsection{$S$-property}

For a compactly supported measure $\mu$ with $\supp(\mu) \setminus X \setminus \{p_\infty\}$, note that $p \mapsto \int G(p,q) \mu(q)$ is a harmonic function on
$X \setminus (\{p_\infty\} \cup \supp(\mu))$. 
It is 
real-valued, but being harmonic it is locally the
real part of a holomorphic function that we denote by $g$
and we call it the $g$-function.
Depending on the situation, we may need to define
certain branch cuts in order to make $g$ single-valued. 
Its real part
\[ \Re g(p) = - \int G(p,q) \mu(q) \]
is always single-valued, however.

In case $F = \gamma$ is a contour and $\varphi = \Re V$ for a multi-valued, locally 
meromorphic function on $X$ with single-valued real part, the variational 
conditions \eqref{GreenEL1}--\eqref{GreenEL2} can also
be formulated as
\begin{equation} \label{GreenEL3}
	\begin{aligned} 
		\Re \left( g_+(p) + g_-(p) - V(p) \right) & = c, 
		\qquad p \in \supp(\mu), \\
		\Re \left( g_+(p) + g_-(p) - V(p) \right) & \leq c, 
		\qquad p \in \gamma.
\end{aligned} \end{equation}
The contour will be oriented, which defines the $+$ and $-$
sides; the notation $g_+$ and $g_-$ refers to limiting values of 
$g$ on the $+$ and $-$ sides of $\gamma$.

\begin{definition}\label{def:S-property} A contour $\gamma$ has the 
	\textbf{$S$-property in external field} $\Re V$
	if the $g$-function of its equilibrium measure $\mu$ in the external field $\Re V$ has the 
	following property:  not only the real part of $g_+ + g_- - V$ is constant on 
	$\supp(\mu)$, see \eqref{GreenEL3}, but also the imaginary part of $g_+ + g_- - V$ 
	is piecewise constant on each connected component of $\supp(\mu)$.
\end{definition}
In case $\gamma$ has more than one connected component,
then the imaginary part of the constant can be different 
on the different components. The imaginary part can also
be different due to a different choice of branches
of $g$ in case a branch cut intersects $\gamma$.

\section{Statement of results}\label{sec:statement-of-results}

\subsection{Cauchy kernel}

We would like to have the analogue of 
Theorem~\ref{theo:MFR} (a) which in particular states that for 
a critical measure 
\begin{equation} \label{Cauchykerneleq} \left[ \int C(p,q) d\mu(q) - \frac{dV(p)}{2} \right]^2 \end{equation}
is a meromorphic quadratic differential on $X$ where $C(p,q)$
is an appropriate Cauchy kernel.

\begin{definition} \label{def:Cauchy10}
	The \textbf{Cauchy kernel} $C(p,q)$ is given in terms
	of the bipolar Green's function with pole at $p_{\infty}$, 
	see Definition~\ref{def:bipolarG}, by
	\begin{equation} \label{Cauchy10} 
		C(p,q) = -2 \partial_p G(p,q) dp. \end{equation}
\end{definition}

Here $2\partial_p = \partial_x - i \partial_y$ if $p = x+iy$
in a local coordinate. Since $p \mapsto G(p,q)$ is harmonic
for $p \in X \setminus \{q,p_{\infty}\}$, we find from
\eqref{Cauchy10} that $C(p,q)$ is a 
meromorphic differential in the $p$-variable (for any fixed $q \in X \setminus \{p_{\infty}\}$)
and a harmonic function in the $q$-variable. More precisely,
\begin{itemize}
	\item $C(\cdot, q)$ is a meromorphic differential on $X$,
	with simple poles at $q$ and at $p_{\infty}$, and
	holomorphic otherwise, the pole at $q$ has residue $1$
	and the pole at $p_\infty$ has residue $-1$ (these
	residues come from the behavior 
	\eqref{Green2}, \eqref{Green3} of the bipolar Green's function)
	
	\item $C(p,\cdot)$ is a harmonic function 
	on $X \setminus \{p, p_{\infty}\}$. 
\end{itemize}
We also note that, since $G(p,q)$ is harmonic and single-valued,
\begin{itemize}	
	\item $C(\cdot,q)$ has purely imaginary periods.
\end{itemize}
since $G(p,q)$ is harmonic and single-valued. The above
three properties actually characterize the Cauchy kernel.

For any measure $\mu$ with compact support, the expression
\eqref{Cauchykerneleq} is a quadratic differential that is meromorphic
on $X \setminus \supp(\mu)$ with poles at $p_{\infty}$
and at the poles of $dV$.

\begin{example} \label{example42}
	(a) On the Riemann sphere we have
	\[ C(p,q) = -2 \partial_p \left(\log \frac{1}{|p-q|} \right) dp
	= \frac{dp}{p-q}, \]
	which is the usual Cauchy kernel from complex analysis.
	
	(b) On a complex torus $X = \mathbb C \slash \Lambda$
	as in Example~\ref{example23} (b) we have
	\begin{align} \nonumber C(p,q) & = -2 \partial_p \left(
		\log \left| \frac{\theta_1(p) \theta_1(q)}{\theta_1(p-q)} \right|
		- \frac{2\pi}{\Im \tau} \left(\Im p\right) \left(\Im q\right)\right) dp \\ \label{Cauchy1} 
		& = - \left( \frac{\theta_1'(p)}{\theta_1(p)} - \frac{\theta_1'(p-q)}{\theta_1(p-q)} + \frac{2\pi i}{\Im \tau}
		\Im q\right) dp. \end{align}
	The formula shows that $q \mapsto C(p,q)$ is not  meromorphic 
	in the genus one case, but it is harmonic.
\end{example}

The notion of Cauchy kernel we use here could be further qualified as {\it imaginary normalized} and it is only harmonic with respect to $q\neq p$. More commonly the term is used to refer to a kernel which is {\it meromorphic} in $q$, with an additional pole at a prescribed collection of $g$ points (where $g$ is the genus of the Riemann surface). Possibly the first occurrence is in \cite{BS49} but the notion appears ubiquitously in the literature on Riemann surfaces, notably in \cite{Fay73, GuRo62, Zve71}. A related notion is used also later in this paper, see Section~\ref{sec:main-result}. 

\subsection{Critical measures}
\label{sec:critical-measures}

In order to define  the notion of a critical measure, we need an analogue of the Schiffer variation formula \eqref{muepsh}. 
Instead of considering a function $h$, as we did in the genus
zero case, we identify  $h$ as a vector field on $X$, i.e., a section 
of the tangent bundle in the higher genus case. 
A vector field $h$ induces a flow $\Phi(t,p)$ on $X$ (for $t \in \mathbb R$ and
$p \in X$, we have $\Phi(t,p) \in X$) satisfying
$\frac{d\Phi}{dt} = h$ and $\Phi(0,p) = p$. 
Then, given a measure $\mu$ on $X$ we define $\mu_{\varepsilon,h}$ by its action
on continuous functions $f$
\begin{equation} \label{CMhigher1} 
	\int f d\mu_{\varepsilon,h} = \int f(\Phi(\varepsilon,p)) d\mu(p), \qquad \varepsilon \in \mathbb R, \end{equation}
which is the analogue of \eqref{muepsh}. 
Thus $\mu_{\varepsilon,h}$ is the image of $\mu$ along the flow
induced by $h$, and we may alternatively write
\[ \mu_{\varepsilon,h} = \Phi(\varepsilon, \mu). \]

\begin{definition} \label{def:CMhigher}
	Let $\mu$ be a measure on $X$ with support in $X \setminus \{p_\infty\}$ and suppose that the external field $\varphi$ is a real-valued $C^1$ function on a neighborhood of $\supp(\mu)$.
	Then $\mu$ is a \textbf{critical measure} in the external field $\varphi$ if
	\begin{equation} \label{CMhigher2} \lim_{\varepsilon \to 0} \frac{E_\varphi[\mu_{\varepsilon,h}] - E_\varphi[\mu]}{\varepsilon} = 0 
	\end{equation}
	for every $C^1$ vector field $h$. 
\end{definition}

Similar to \cite[Lemma~3.1]{MFR11}, there is a convenient identity for the limit in \eqref{CMhigher2} in case $\varphi$ is given by the real part of $V$.

\begin{proposition}\label{prop:CMhigher}
	Suppose that the external field $\varphi$ is given by $\varphi = \Re V$ and suppose that $\mu$ is a measure with support in $X \setminus \{p_\infty\}$ 
	such that $E[\mu] < \infty$ and $\varphi$ is bounded on $\supp(\mu)$.
	Then the following holds.
	\begin{enumerate}
		\item[\rm (a)] 	
		For every $C^1$ vector field $h$, the limit in \eqref{CMhigher2} exists with
		\begin{equation} \label{CMhigher4} \lim_{\varepsilon \to 0 } \frac{E_\varphi[\mu_{\varepsilon,h}]-E_\varphi[\mu]}{\varepsilon} 
			=  \Re D_{V,h}(\mu),  \end{equation}
		where $D_{V,h}(\mu)$ is given by 
		\begin{equation} \label{CMhigher3}
					D_{V,h}(\mu) = - \iint 
					\left(h(p) C(p,q) + h(q) C(q,p)\right) d\mu(p) d\mu(q) 
					+  \int h dV d\mu \end{equation}
				and $C(p,q)$ is the Cauchy kernel from Definition~\ref{def:Cauchy10}, see Remark \ref{remarkDVh}.
		\item[\rm (b)] 
		A critical measure $\mu$ satisfies
		$D_{V,h}(\mu) = 0$
		for every $C^1$ vector field $h$.
	\end{enumerate}	
\end{proposition}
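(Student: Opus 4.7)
The plan is to differentiate $E_\varphi[\mu_{\varepsilon,h}]$ in $\varepsilon$ at $\varepsilon=0$ by passing the derivative under the integral signs, handling the Green's-energy and external-field contributions separately, and then to deduce part (b) from part (a) by a complex-linearity argument analogous to ``considering both $h$ and $ih$'' in the genus zero case. In a local coordinate $z$ around a point of $\supp(\mu)$, write the real $C^1$ vector field $h$ as $f(z)\partial_z + \overline{f(z)}\partial_{\bar z}$; the flow then satisfies $z(\Phi(\varepsilon,p)) = z(p) + \varepsilon f(p) + O(\varepsilon^2)$ uniformly on $\supp(\mu)$.

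For the external field term in \eqref{GreenE1}, local holomorphicity of $V$ on a neighborhood of $\supp(\mu)\subset X\setminus\{p_\infty\}$ (which holds because $\varphi=\Re V$ is bounded on $\supp(\mu)$, so the poles of $dV$ are avoided) gives
\[
\tfrac{d}{d\varepsilon}\big|_{\varepsilon=0}\Re V(\Phi(\varepsilon,p)) = \Re\bigl(f(p)V'(p)\bigr) = \Re\bigl(h\,dV\bigr)(p),
\]
contributing $\Re\int h\,dV\,d\mu$ to the derivative. For the Green's energy, reality of $G$ together with $C(p,q) = -2\partial_p G(p,q)\,dp$ give
\[
\tfrac{d}{d\varepsilon}\big|_{\varepsilon=0} G(\Phi(\varepsilon,p),\Phi(\varepsilon,q)) = 2\Re\bigl(f(p)\partial_z G(p,q)\bigr) + 2\Re\bigl(f(q)\partial_z G(q,p)\bigr) = -\Re\bigl(h(p)C(p,q) + h(q)C(q,p)\bigr),
\]
which, integrated against $d\mu\otimes d\mu$, yields the remaining term of $\Re D_{V,h}(\mu)$.

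The main obstacle is justifying the interchange of $d/d\varepsilon$ with the double integral, because $G(p,q)$ has a logarithmic singularity on the diagonal. To handle this I would use the local decomposition $G(p,q) = \log\frac{1}{|z(p)-z(q)|} + R(p,q)$ near the diagonal with $R$ uniformly bounded (the content of the lemma following Example~\ref{example23}), together with
\[
z(\Phi(\varepsilon,p)) - z(\Phi(\varepsilon,q)) = \bigl(z(p)-z(q)\bigr)\Bigl(1 + \varepsilon\,\tfrac{f(p)-f(q)}{z(p)-z(q)} + O(\varepsilon^2)\Bigr).
\]
Since $f$ is $C^1$, its divided difference is bounded uniformly on $\supp(\mu)\times\supp(\mu)$, so taking $\log$ and dividing by $\varepsilon$ produces a uniformly bounded quantity. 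A similar but easier estimate handles the $R$-contribution and the external-field term, so dominated convergence against the finite product measure $d\mu\otimes d\mu$ justifies (a).

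Part (b) follows at once: criticality forces $\Re D_{V,h}(\mu) = 0$ for every real $C^1$ vector field $h$. Since both $C(\cdot,q)$ and $dV$ are $(1,0)$-forms, only the $(1,0)$-part $h^{1,0}$ of $h$ enters the integrals in \eqref{CMhigher3}, so $D_{V,h}(\mu)$ is complex-linear in $h^{1,0}$. Replacing $h^{1,0}$ by $i h^{1,0}$ yields another real vector field $\tilde h = ih^{1,0}+\overline{ih^{1,0}}$ with $D_{V,\tilde h}(\mu) = i D_{V,h}(\mu)$, and vanishing of $\Re D_{V,\tilde h}(\mu)$ then gives $\Im D_{V,h}(\mu) = 0$. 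Hence $D_{V,h}(\mu) = 0$ for every $C^1$ vector field $h$.
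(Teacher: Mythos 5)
Your proposal is correct and follows essentially the same route as the paper: differentiate the external-field and Green's-energy terms along the flow to obtain $\Re D_{V,h}(\mu)$, control the diagonal singularity through the boundedness of the divided difference of a $C^1$ coefficient (you do this via the decomposition $G(p,q)=\log\frac{1}{|z(p)-z(q)|}+O(1)$, the paper equivalently via the continuity of $h(p)C(p,q)+h(q)C(q,p)$ across the diagonal), and conclude by dominated convergence. Part (b) via replacing $h^{1,0}$ by $ih^{1,0}$ is exactly the paper's argument.
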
	

The proof is given in Section~\ref{sec:proofs-critical}.

\begin{remark} \label{remarkDVh}
Let us comment on the expression \eqref{CMhigher3}.
The vector field $h$ acts on the meromorphic differential $dV$ in a natural way: the expression $h dV$ defines a $C^1$ function on $X$ away from the poles of $dV$.
The function can  be integrated against a measure and this is how
the term $\int  h dV  d\mu$
should be interpreted in \eqref{CMhigher3}. It is
the analogue of the right-hand side of \eqref{CM1}. 
Also, the fact that the external field is harmonic means that we only need to consider vector fields in the \textit{holomorphic tangent bundle}.
See  Remark~\ref{remT10} for more details.

The analogue of the left-hand side of \eqref{CM1} is the double integral 
with the Cauchy kernel $C(p,q)$. 
Since $C(p,q)$ is a differential in $p$, the product
$h(p) C(p,q)$ is a function in both variables $p$ and $q$
that becomes infinite when $p=q$. The same holds true
for $h(q) C(q,p)$, but the infinities for $p=q$ disappear 
in the sum
$h(p) C(p,q) + h(q) C(q,p)$ if $h$ is
a $C^1$ vector field. The sum is a well-defined bounded and continuous function on 
$(X \setminus \{p_{\infty}\}) \times (X \setminus \{p_{\infty}\})$ that 
plays the role of the divided difference $\frac{h(s)-h(t)}{s-t}$ in \eqref{CM1}.
\end{remark}

\begin{remark}
	\label{remT10}
	The energy functionals we are considering involve harmonic functions $\varphi = \Re V$ for their external field, and the bipolar Green's function is also a harmonic function.
	This has the following effect: when we consider the Schiffer variations, it is sufficient to consider vector fields in the  holomorphic tangent bundle of the Riemann surface $X$, usually denoted by $T^{1,0} = T^{1,0}X$. 
	In concrete terms, this means that when we consider $X$ as a two-dimensional real manifold with local coordinates $x,y$ (forming the real and imaginary part of a local coordinate $z=x+i y$), a vector field is simply an expression of the form $\mathbb V=a(x,y)\frac {\partial}{\partial x} +  b(x,y)\frac {\partial}{\partial y} $, with $a,b$ smooth real-valued functions. The vector field can then be written in terms of the Cauchy-Riemann operators $\frac {\partial}{\partial z} = \frac 12 \left(\frac \partial{\partial x} -i \frac \partial{\partial y}\right), \frac {\partial}{\partial \overline z} = \frac 12 \left(\frac \partial{\partial x} +i \frac \partial{\partial y}\right)$ as 
	\begin{equation}
		\mathbb V = (a + i b) \frac {\partial}{\partial z}  +  
		(a - i b) \frac {\partial}{\partial \overline z}  = \mathbb V^{1,0} + \mathbb V^{0,1}.
	\end{equation}
	A real-valued harmonic function $H(x,y)$ can be written locally as the real part of a holomorphic (or anti-holomorphic) function $f$ as $H(x,y) =  \Re f(z)$. Acting with $\mathbb V$ on such a  function yields 
	\begin{equation}
		\mathbb  V H(x,y) =\frac{1}{2}
		\left( (a + i b) \frac {\partial f(z)}{\partial z} + (a - i b) \frac {\partial \overline {f(z)} }{\partial \overline z}\right) = 
		\Re \left( \mathbb V^{1,0} f \right).
	\end{equation}
	Having thus established that we need only to consider vectors in $ T^{1,0}$, we also now point out that given any (meromorphic) differential, written in local coordinate as $\omega = f(z) d z$, the expression $\frac 1{\omega}$  can be interpreted as a (meromorphic) vector field in $T^{1,0}$.  This is easily verified by observing that it transforms as a vector in $T^{1,0}$ does under holomorphic change of coordinates. 
	In fact this also holds for {\it smooth} sections of the canonical bundle, namely, expressions $\omega  = f(z,\overline z)d z$, with $f$ a smooth (i.e. not necessarily satisfying the Cauchy-Riemann equations) complex-valued function. 
	Even in this case, $\frac1\omega$ again defines a smooth vector field wherever $\omega$ is not zero. 
\end{remark}

\subsection{Critical sets} \label{sec:critical-sets}

For a vector field $h$ with an associated flow $\Phi$, we 
also consider the flow   
\begin{equation} \label{CShigher1} F_{\varepsilon,h} = \Phi(\varepsilon, F) \end{equation}
of a compact set $F$.
\begin{definition} \label{def:CShigher}
	$F$ is a \textbf{critical set} in the external field $\varphi = \Re V$ 	if
	\begin{equation} \label{CShigher2} \lim_{\varepsilon \to 0} \frac{E_{\varphi}(F_{\varepsilon,h}) - E_{\varphi}(F)}{\varepsilon} = 0 \end{equation}
	for every $C^1$ vector field $h$. 
\end{definition}

The following analogue of Proposition~\ref{prop:CS} holds.
\begin{proposition} \label{prop:CShigher}
	Let $F \subset X \setminus \{p_{\infty}\}$ be a union of continua such that $\Re V$ is
	bounded from below on $F$. Let $\mu^F$ be the equilibrium measure of $F$ in the external field $\varphi = \Re V$. 
	\begin{enumerate}
		\item[\rm (a)] 
		Then for every $C^1$ vector field $h$,
		\begin{equation} \label{CShigher3} \lim_{\varepsilon \to 0} \frac{E_{\varphi}(F_{\varepsilon,h}) - E_{\varphi}(F)}{\varepsilon} = 
			\lim_{\varepsilon \to 0} \frac{E_{\varphi}[\mu^F_{\varepsilon,h}] - E_{\varphi}[\mu^F]}{\varepsilon}		
		\end{equation}
		\item[\rm (b)] $F$ is a critical set in the external field $\varphi$ if and only
		its equilibrium measure $\mu^F$ is a critical measure in the external field $\varphi$.
	\end{enumerate}
\end{proposition}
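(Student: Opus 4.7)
My approach is to mirror the genus-zero proof of Proposition~\ref{prop:CS}, replacing the planar perturbation $x \mapsto x + \varepsilon h(x)$ by the flow $\Phi(\varepsilon, \cdot)$ associated with the $C^1$ vector field $h$. The crucial ingredient is the analogue of \eqref{CSisCM}, namely
\begin{equation*}
E_\varphi(F_{\varepsilon,h}) - E_\varphi[\mu^F_{\varepsilon,h}] = o(\varepsilon), \qquad \varepsilon \to 0,
\end{equation*}
since once this is established, both terms have the common derivative $\Re D_{V,h}(\mu^F)$ at $\varepsilon = 0$ by Proposition~\ref{prop:CMhigher}, giving part (a); part (b) then follows immediately from the definitions of critical set and critical measure.

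The first step is the natural two-sided bracketing. For small $\varepsilon$ the map $\Phi(\varepsilon, \cdot)$ is a $C^1$ diffeomorphism of $X$ sending $F$ onto $F_{\varepsilon, h}$, so the pull-back $\sigma_\varepsilon := \Phi(-\varepsilon, \mu^{F_{\varepsilon,h}})$ is a probability measure on $F$ satisfying $(\sigma_\varepsilon)_{\varepsilon,h} = \mu^{F_{\varepsilon,h}}$. Using that $\mu^F_{\varepsilon,h}$ is supported in $F_{\varepsilon,h}$ and $\sigma_\varepsilon$ in $F$, the minimality of the equilibrium energies yields
\begin{equation*}
E_\varphi[(\sigma_\varepsilon)_{\varepsilon,h}] - E_\varphi[\sigma_\varepsilon] \ \leq\ E_\varphi(F_{\varepsilon,h}) - E_\varphi(F) \ \leq\ E_\varphi[\mu^F_{\varepsilon,h}] - E_\varphi[\mu^F].
\end{equation*}
By Proposition~\ref{prop:CMhigher}(a) the right-hand side equals $\varepsilon \Re D_{V,h}(\mu^F) + o(\varepsilon)$.

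The next step is to show $\sigma_\varepsilon \to \mu^F$ in energy norm. The upper bracket yields $E_\varphi[\mu^{F_{\varepsilon,h}}] \leq E_\varphi[\mu^F] + O(\varepsilon)$; applying the first-order expansion uniformly in the measure (see below) with vector field $-h$ to $\mu^{F_{\varepsilon,h}}$ converts this into $E_\varphi[\sigma_\varepsilon] \leq E_\varphi[\mu^F] + O(\varepsilon)$. Combined with the minimality $E_\varphi[\sigma_\varepsilon] \geq E_\varphi[\mu^F]$, the standard convexity argument based on the Euler--Lagrange inequalities \eqref{GreenEL1}--\eqref{GreenEL2} gives
\begin{equation*}
E[\sigma_\varepsilon - \mu^F] \ \leq\ E_\varphi[\sigma_\varepsilon] - E_\varphi[\mu^F] \ \longrightarrow\ 0,
\end{equation*}
which together with the positive-definiteness in Proposition~\ref{prop:Gposdef} establishes convergence in energy norm. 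Because $D_{V,h}(\mu)$ is a continuous bilinear form in $\mu$ with respect to this norm, the lower bracket is also $\varepsilon \Re D_{V,h}(\mu^F) + o(\varepsilon)$, and the sandwich completes \eqref{CShigher3}.

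The hard part is the uniform version of Proposition~\ref{prop:CMhigher}(a) that I invoke at two points: for every family of measures with uniformly bounded bipolar Green's energy and supports in a fixed compact subset of $X \setminus \{p_\infty\}$, one needs
\begin{equation*}
E_\varphi[\mu_{\varepsilon,h}] - E_\varphi[\mu] = \varepsilon \Re D_{V,h}(\mu) + o(\varepsilon),
\end{equation*}
with the remainder uniform in $\mu$. This must be proved by tracking the second-order remainder in the Taylor expansion of $G(\Phi(\varepsilon,p), \Phi(\varepsilon,q)) - G(p,q)$ along the flow, together with the expansion of $\varphi(\Phi(\varepsilon,p)) - \varphi(p)$, using the boundedness of the symmetrized expression $h(p)C(p,q) + h(q)C(q,p)$ near the diagonal noted in Remark~\ref{remarkDVh}, and the $\mu \otimes \mu$-integrability of the singular logarithmic part of $G$ provided by the uniform energy bound. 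All other steps are routine adaptations of the proof in \cite{KS15,Rak12} once this uniform expansion is in place.
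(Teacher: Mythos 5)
Your proof is correct and follows essentially the same strategy as the paper's: the two-sided bracketing of $E_\varphi(F_{\varepsilon,h})-E_\varphi(F)$ by energies of pushed-forward and pulled-back measures, the identification of the uniform first-order expansion (this is precisely the paper's Lemma~\ref{lem:lemma46}) as the key technical ingredient, and the passage through energy-norm convergence to upgrade the $O(\varepsilon)$ in the lower bracket to $\varepsilon\,\Re D_{V,h}(\mu^F)+o(\varepsilon)$.

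The one place where you deviate from the paper's route is in how the energy-norm convergence is established. The paper works with $\mu_\varepsilon=\mu^F_{\varepsilon,h}$ and $\mu^\varepsilon=\mu^{F_{\varepsilon,h}}$, both supported on $F_{\varepsilon,h}$, and uses the purely algebraic identity $E[\mu_\varepsilon-\mu^\varepsilon]=2E_\varphi[\mu_\varepsilon]+2E_\varphi[\mu^\varepsilon]-4E_\varphi[\tfrac12(\mu_\varepsilon+\mu^\varepsilon)]$ together with $E_\varphi(F_{\varepsilon,h})\le E_\varphi[\tfrac12(\mu_\varepsilon+\mu^\varepsilon)]$; this never touches the Euler--Lagrange conditions. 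You instead compare $\sigma_\varepsilon=(\mu^{F_{\varepsilon,h}})_{-\varepsilon,h}$ with $\mu^F$, both on the fixed set $F$, and invoke the standard inequality $E[\sigma_\varepsilon-\mu^F]\le E_\varphi[\sigma_\varepsilon]-E_\varphi[\mu^F]$, which follows from the variational characterization \eqref{GreenEL1}--\eqref{GreenEL2}. Both routes are valid and of comparable length; yours has the mild advantage of keeping all measures on the fixed compact $F$, while the paper's midpoint trick is slightly more self-contained in that it avoids invoking the Euler--Lagrange conditions. Your remark about the uniform energy bound being needed for the $\mu\otimes\mu$-integrability of the logarithmic singularity is not quite the right emphasis: the crucial point (as in the paper's proof of Lemma~\ref{lem:lemma46}) is that after symmetrization $h(p)C(p,q)+h(q)C(q,p)$ is already bounded and continuous, so the uniform Taylor remainder integrates uniformly against probability measures with no energy hypothesis on the remainder estimate; the finiteness of $E[\mu]$ is needed only so that the individual energies appearing in \eqref{CMhigher9} are finite.
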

The proof is given in Section~\ref{sec:proofs-critical},
and it is similar to the proof of Lemma~3.5 in \cite{Rak12} and the proof of Proposition~4.2 in \cite{KS15}. 
Proposition~\ref{prop:CShigher} is the higher genus analogue 
of parts (a) and (b) of Proposition~\ref{prop:CS}.
The analogue of part (c) is now immediate as well.
\begin{corollary} \label{cor:maxmin}
	Suppose $F$ belongs to a family $\mathcal F$
	of union of continua such that 
	for every $C^1$ vector field $h$ there is $\varepsilon_0 >0$	such that
	$F_{\varepsilon,h} \in \mathcal F$ for every $\varepsilon \in
	(-\varepsilon_0, \varepsilon_0)$, where $F_{\varepsilon,h}$ is given by \eqref{CShigher1}. Suppose also that  
	\begin{equation} \label{CShigher12}
		E_\varphi(F) = \max_{F'\in \mathcal F} E_\varphi(F'). 
	\end{equation}
	Then $F$ is a critical set and $\mu^F$ is a critical measure
	in the external field $\varphi$.
\end{corollary}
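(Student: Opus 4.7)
The proof of this corollary should be essentially a one-line consequence of Proposition~\ref{prop:CShigher} together with the maximizing hypothesis, exactly as in part (c) of Proposition~\ref{prop:CS}. The plan is to argue that the maximum condition forces the one-parameter family $\varepsilon \mapsto E_\varphi(F_{\varepsilon,h})$ to have a local maximum at $\varepsilon = 0$ for every admissible vector field $h$, and then use Proposition~\ref{prop:CShigher}(a) to conclude the required limit is zero.

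More concretely, fix any $C^1$ vector field $h$. By hypothesis, there is $\varepsilon_0 > 0$ such that $F_{\varepsilon,h} \in \mathcal F$ for every $\varepsilon \in (-\varepsilon_0,\varepsilon_0)$. Applying \eqref{CShigher12} to each such $F_{\varepsilon,h}$ gives
\[
E_\varphi(F_{\varepsilon,h}) \le E_\varphi(F), \qquad \varepsilon \in (-\varepsilon_0,\varepsilon_0).
\]
Hence the difference quotient $\bigl(E_\varphi(F_{\varepsilon,h}) - E_\varphi(F)\bigr)/\varepsilon$ is $\le 0$ for $\varepsilon \in (0,\varepsilon_0)$ and $\ge 0$ for $\varepsilon \in (-\varepsilon_0,0)$. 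By Proposition~\ref{prop:CShigher}(a) the two-sided limit in \eqref{CShigher3} exists and equals the derivative of $\varepsilon \mapsto E_\varphi[\mu^F_{\varepsilon,h}]$ at $\varepsilon=0$; since both one-sided limits of the displayed difference quotient must agree with this common value, the only possibility is
\[
\lim_{\varepsilon\to 0}\frac{E_\varphi(F_{\varepsilon,h})-E_\varphi(F)}{\varepsilon}=0.
\]
As $h$ was arbitrary, this is precisely the defining condition \eqref{CShigher2} of a critical set, so $F$ is critical. Proposition~\ref{prop:CShigher}(b) then immediately yields that $\mu^F$ is a critical measure in the external field $\varphi$.

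There is really no substantive obstacle here: the delicate analytic content (existence and equality of the two limits in \eqref{CShigher3}, and the equivalence between criticality of $F$ and of $\mu^F$) has already been packaged into Proposition~\ref{prop:CShigher}. The only minor point worth flagging in the write-up is that the maximum hypothesis gives inequalities of opposite sign on the two sides of $\varepsilon = 0$, so it is important that the limit in Proposition~\ref{prop:CShigher}(a) is genuinely two-sided; this is where the admissibility assumption that $F_{\varepsilon,h} \in \mathcal F$ for an open interval of $\varepsilon$ around $0$ (not just for $\varepsilon \ge 0$) is used.
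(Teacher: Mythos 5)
Your proof is correct and follows essentially the same route as the paper, which notes the corollary is immediate from Proposition~\ref{prop:CShigher}: the maximality hypothesis forces a local maximum of $\varepsilon \mapsto E_\varphi(F_{\varepsilon,h})$ at $\varepsilon = 0$, and the existence of the two-sided limit from part (a) then forces that limit to vanish, after which part (b) gives criticality of $\mu^F$. Your remark about needing the admissibility of $F_{\varepsilon,h}$ on a full open interval around $0$ is exactly the right point to flag.
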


\subsection{Max-min energy problem}

Next we describe a situation to which  Corollary \ref{cor:maxmin}
applies and where we can prove the existence of a critical set $F$ and
a critical measure $\mu^F$ in external field. 
We are not aiming at the most general set-up here,
but instead we confine ourselves to a case that will be 
useful for the analysis of periodic tiling models, see Section~\ref{sec:motivation}.

We assume $\varphi = \Re V$, where $V$ is locally meromorphic on $X$
with logarithmic singularities at a finite number of points. 
The differential $dV$ is meromorphic and single-valued on $X$
with at most simple poles (i.e., an Abelian differential of the third kind). 
The logarithmic singularities of $V$ correspond to simple poles of $dV$. 
We assume the residues are real and negative, 
 except for a pole $p_0 \in X \setminus \{p_{\infty} \}$
with positive residue $r_0 > 0$ and a possible pole at $p_\infty$.
This implies
that in a local coordinate $z_0$ at $p_0$ one has
\begin{equation} \label{phiatp0}
	\Re V(p) = r_0 \log |z_0(p)| + O(1), \quad \text{ as } p \to p_0.
\end{equation}
We fix a local coordinate at $p_0$, that identifies a certain
neighborhood of $p_0$ in $X$ with a disk 
$D(0,\delta_0) = \{|z_0| < \delta_0\}$  for some $\delta_0 > 0$.
For $\delta < \delta_0$ we use $U_{\delta}(p_0)$ to denote
the  neighborhood of $p_0$ that corresponds to 
$D(0,\delta)$ in this local coordinate.

The differential $dV$ could have simple pole at $p_{\infty}$ as well,
and then we use $r_{\infty} = \Res(dV,p_{\infty})$ 
to denote the residue which could be positive or negative.  
If $dV$ is holomorphic at $p_{\infty}$,
then we write $r_{\infty} = 0$. In either case we have
in a fixed local coordinate $z_{\infty}$ at $p_{\infty}$   
\begin{equation} \label{phiatpinfty}
	\Re V(p) = r_{\infty} \log |z_{\infty}(p)| + O(1),
	\quad \text{ as } p \to p_{\infty}.
\end{equation}
A neighborhood of $p_{\infty}$ is identified with the disk 
$D(0, \delta_{\infty})$ for some $\delta_{\infty} > 0$. 
For $\delta < \delta_{\infty}$ we use 
$U_{\delta}(p_{\infty})$ for the neighborhood of $p_{\infty}$
that corresponds to the smaller disk $D(0,\delta)$.

At all other poles (if any) there is a negative residue,
which means that $\Re V$ tends to $+\infty$ at those poles.
Thus the external field $\Re V$ is bounded from below on 
$X \setminus (U_{\delta}(p_0) \cup U_{\delta}(p_{\infty}))$
for any small enough $\delta > 0$. 
See Figure \ref{figcontours} for a sketch of the neighborhoods
$U_{\delta}(p_0)$ and $U_{\delta}(p_{\infty})$.
\begin{figure}
\resizebox{0.99\textwidth}{!}{
\begin{tikzpicture}[scale=7]
\begin{scope}[rotate=10]

\shade  [line width = 1.5, top color = white!76!black](0.5,-0.3)  
  to  [out =130, in =-30] (0.5,0.3) 
  to [out = 220, in=0] (0,0.1)
  to [out=180, in=-30] (-0.5,0.3) 
  to [out=150,  in =20] (-1.2,0.4)  
  to [out=200, in =-10] (-2,0.2)  
  to [out=210, in=30]  (-2,-0.2) 
  to [out=30, in=160, looseness=0.9] (-1.2,-0.4)  
  to [out=-20, in=210, looseness=0.9] (-0.5,-0.3)
  to [out=30, in =180] (0,-0.1) 
  to [out=0, in=150]  (0.5, -0.3);

\draw [line width = 1.5](0,0.1) to  [out=0,in=220] coordinate [  pos=0.4] (d2)(0.5,0.3)  coordinate (d) 
  to [out = -30, in =130] coordinate [pos=0.5] (cc) (0.5,-0.3) to [out=70, in=-80] (cc);
\shade [fill=white!50!black!, opacity =0.4]  (cc) to [out=-110, in=130] (0.5,-0.3) to [out=70, in=-80] (cc) ; 
\draw (d) to [out = -30, in =130] coordinate [pos=0.5] (cc) (0.5,-0.3) to [out=70, in=-80] (cc);

\draw [line width = 1.5] (0,-0.1) to [out=0, in = 150, looseness=01]  coordinate [ pos=0.4] (d2b)(0.5,-0.3);
 \draw [line width = 1.5](0,-0.1) to  [out=180, in = 30, looseness=01] coordinate [ pos=0.4] (c2b) (-0.5,-0.3);

\draw [line width = 1.5](0.5,-0.3)  to  [out =130, in =-30] (0.5,0.3) to [out = 220, in=0] (0,0.1)
  to [out=180, in=-30] coordinate [pos=0.4] (c2) (-0.5,0.3) coordinate(c)
  to [out=150,  in =20, looseness=1] (-1.2,0.4) coordinate (f) 
  to [out=200, in =-10, looseness=1] coordinate[pos=0.5] (t1) (-2,0.2) coordinate(f1)  
  to [out=210, in=30]  coordinate [pos=0.5] (f2) coordinate [pos=0.8] (f22)(-2,-0.2) coordinate (f3)
  to [out=30, in=160, looseness=0.9] coordinate [pos=0.5] (t3)  (-1.2,-0.4)  
  to [out=-20, in=210, looseness=0.9] coordinate (bottom) (-0.5,-0.3)
  to [out=30, in =180] (0,-0.1) 
  to [out=0, in=150]  (0.5, -0.3);

\draw [ green!30!black, line width = 1.5  , postaction={decorate,decoration={markings,mark=at position 0.6 with {\arrow[line width=1.8pt]{>}}}}]  (t1) to [out=-120, in=120] (t3);
\draw [ green!30!black, dashed, line width = 1.5 ]  (t1) to [out=-70, in=70] (t3);

\node at  ($(t1)+(-0.1,-0.3)$) {$\gamma_1$};
 
 \draw [ green!30!black, line width = 1.5  , postaction={decorate,decoration={markings,mark=at position 0.6 with {\arrow[line width=1.8pt]{<}}}}]  (c2) to [out=-60, in=60] (c2b);
 \draw [green!30!black,  line width = 1.5  , dashed]  (c2b) to [out=120, in=-120] (c2);
 
 \node at  ($(c2)+(0.07,-0.1)$) {$\gamma_2$};

\draw [line width =1.8, line cap =round] (-1.3,0.05) to [out=-20, in=200] coordinate[pos=0.1] (c1)  coordinate [pos=0.88] (c2) (-0.7,0.05)  ; 
\draw [line width = 1.4] (c1) to [out=40,in=145, looseness=1] (c2);

\draw [fill=white] (c1) to [out=40, in = 145] (c2) to [out =197, in =-17] (c1);
\draw [line width =1.8, line cap =round] (-1.3,0.05) to [out=-20, in=200]  (-0.7,0.05)  ;

\shade[ draw,fill=red!80!black!, opacity =0.4] (f3) to [out=110, in=-100] (f2) to [out=-70, in=30] (f3) -- cycle; 
\draw[ line width = 1pt] (f3) to [out=110, in=-100] (f2) to [out=-70, in=30] (f3) -- cycle; 

\coordinate (v1) at  (-1.2,-0.2) ;
\coordinate (v2) at  ($  (v1) + (60:0.1)$) ;
\coordinate (v3) at  ($  (v1) + (-80:0.1)$)  ;

\coordinate  (v4) at (-1.1, 0.3);
\coordinate (p1) at (-0.1,0);
\coordinate (p2) at (0.1,0.05);
\coordinate (t1) at (-0.5,0.1);

\draw [ line width =0.2pt, fill=white!1!black!80!red, fill opacity = 0.3 , rotate=-10]   (v1)++(0.02,0) ellipse (0.05 and 0.05);
\node at ($(v1)+(0,-0.09)$) {$U_\delta(p_0)$};

\draw [ line width =0.2pt, fill=white!1!black!80!red, fill opacity = 0.3 , rotate=-10]   (v4)++(0.02,0) ellipse (0.05 and 0.05);
\node at ($(v4)+(0,-0.08)$) {$U_\delta(p_\infty)$};
\node at ($(v4)+(0.15,0)$) {$\gamma_3$};

\draw [green!30!black, line width = 1.2  , postaction={decorate,decoration={markings,mark=at position 0.6 with {\arrow[line width=1.6pt]{>}}}}] (v4)+(-0.01,-0.02) circle [radius =0.12];
\end{scope}
\end{tikzpicture}
}
\caption{
Neighborhoods $U_{\delta}(p_0)$ and $U_{\delta}(p_{\infty})$,
and an example  of a multi-contour $\gamma = \gamma_1 \cup \gamma_2 \cup \gamma_3$ in $\mathcal T_\delta$ as defined in Definition~\ref{def:Tdelta}.
The three components belong to $\mathcal S_\delta$.
\label{figcontours}}
\end{figure}
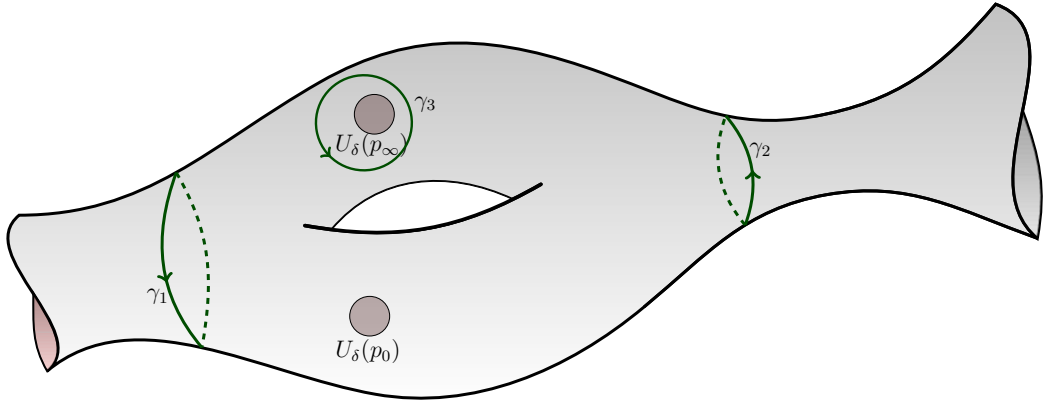

\begin{definition}\label{def:Tdelta}
	Suppose $\delta_0$ and $\delta_{\infty}$ are 
	as above with $\delta_{\infty} \geq \delta_0$ and such that the neighborhoods $U_\delta(p_0)$ and $U_\delta(p_{\infty})$ are disjoint
	for every $\delta \in (0,\delta_0]$. 
	Then for $\delta \in (0,\delta_0]$, we define the following
	\begin{enumerate}
	\item[\rm (a)] 
	We use $\mathcal S_{\delta}$ to denote the set
	of simple closed oriented contours in $X \setminus (U_{\delta}(p_0) \cup U_{\delta}(p_{\infty}))$ 
	that are not homotopic to a point in $X \setminus \{ p_0,p_\infty\}$. 
		\item[\rm (b)]  
	We use $\mathcal T_{\delta}$ to denote the set
	\begin{equation} \label{eq:defTdelta} \mathcal T_{\delta} = \left\{ \gamma = \gamma_1 \cup \cdots \cup \gamma_n
	\mid  
	\begin{array}{ll} n \in \mathbb N,  \gamma_j \in \mathcal S_{\delta} \text{ for every $j$, and }  \gamma \text{ is homologous} \\ \text{in } X \setminus \{p_0, p_{\infty}\}
		\text{ to a circle around $p_0$
				in } U_{\delta}(p_0) 
			 \end{array} \right\}. \end{equation}
	\item[\rm (c)]
	We define
	\begin{equation} \label{Hclosure}
		\mathcal F_{\delta}  = \overline{\mathcal T}_{\delta}, 
		\qquad 
		\mathcal F  = \bigcup_{0 < \delta < \delta_0} 
		\mathcal F_{\delta},
	\end{equation}
	where the closure is with respect to the topology corresponding
	to Hausdorff distance on compact subsets of $X$.
	\end{enumerate}
\end{definition}
An illustration of a possible multi-contour in $\mathcal T_\delta$ is provided in Figure~\ref{figcontours}.

The motivation for the definition~\eqref{eq:defTdelta}
comes from the study of orthogonality on the Riemann surface, see the discussion in Section~\ref{section45} below. The orthogonality 
is expressed through the vanishing of certain
integrals of meromorphic differentials over a small contour around $p_0$. The meromorphic differentials have poles at $p_0$ and $p_{\infty}$, and in the absence of further poles, the integral 
can be deformed to a system of contours $\gamma$ as 
in \eqref{eq:defTdelta} by Cauchy's theorem.   
Within the class $\mathcal T_{\delta}$
	(with $\delta$ sufficiently small) 	we want to find an ideal $\gamma$.  

The Hausdorff distance on compact subsets of $X$ that is used
in \eqref{Hclosure} comes from 
a distance function $d$ on $X$ that is compatible with the complex structure; see \eqref{eq:defRiemDistance}. Then  we write
\[ d(p,F) = \min_{q \in F} d(p,q) \] 
for the distance of $p \in X$ to a compact set $F \subset X$, 
and 
\[ d_H(F_1, F_2) = \max\left( \max_{p \in F_1} d(p,F_2),
\max_{p \in F_2} d(p,F_1) \right) \]
for the Hausdorff distance between compact sets $F_1$ and $F_2$.

The Hausdorff distance restricted
to compact subsets of a fixed compact defines a compact
metric space. Thus $\mathcal F_{\delta}$ is compact for
every $\delta \in (0, \delta_0)$. The set $\mathcal F$ from \eqref{Hclosure}
does not depend on the precise choice of distance
function on $X$. 
Recall that $E_{\varphi}(F)$ is given by \eqref{GreenEF}.

\begin{theorem} \label{thm:residues}
	In the above setting, suppose that 
	$r_0 = \Res(dV,p_0) > 1$ and $r_{\infty} = \Res(dV,p_{\infty}) > -1$.
	Then there exists $F \in \mathcal F$
	(as defined in Definition~\ref{def:Tdelta})  such that 
	\begin{equation} \label{eq:residues} 
		E_{\varphi}(F) = \max_{F'\in \mathcal F} E_{\varphi}(F'). 
		\end{equation}
	The set $F$ is critical and its equilibrium measure $\mu^F$
	is a critical measure in the external field $\varphi$.
\end{theorem}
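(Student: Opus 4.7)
The statement bundles two claims: existence of a maximizer of $E_\varphi$ over $\mathcal F$, and criticality of that maximizer together with its equilibrium measure. Corollary~\ref{cor:maxmin} delivers the criticality as soon as a maximizer is found and $\mathcal F$ is verified to be closed under small Schiffer flows. I therefore focus on existence, using the direct method of the calculus of variations: take a maximizing sequence, use the residue conditions to keep it in a single $\mathcal F_{\delta_*}$ with $\delta_* > 0$, extract a Hausdorff-convergent subsequence by compactness, and pass to the limit via upper semicontinuity of $F \mapsto E_\varphi(F)$.

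\textbf{Steps.} A concrete reference contour (e.g.\ a smooth loop around $p_0$ at moderate distance from both singularities) has finite energy, so $\sup_{\mathcal F} E_\varphi > -\infty$. For coercivity, suppose some maximizing $(F_n) \subset \mathcal F$ has a subsequence with points within distance $\epsilon_n \to 0$ of $p_0$. On a short sub-arc of $F_n$ near such a point, place the normalized arc-length probability measure $\mu$ of diameter of order $\epsilon_n$. Using the local expansions \eqref{phiatp0} and \eqref{Gnearp0},
\[ E_\varphi[\mu] = \iint G\,d\mu\,d\mu + \int \varphi\,d\mu \;\leq\; \log(1/\epsilon_n) + r_0 \log \epsilon_n + O(1) = (r_0-1)\log \epsilon_n + O(1), \]
which diverges to $-\infty$ since $r_0 > 1$. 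Hence $E_\varphi(F_n) \leq E_\varphi[\mu] \to -\infty$, contradicting maximization. The parallel estimate near $p_\infty$, built from \eqref{phiatpinfty} and \eqref{Gnearinfty}, yields $E_\varphi[\mu] \leq (r_\infty+1)\log \epsilon_n + O(1)$, which diverges precisely when $r_\infty > -1$. Thus the maximizing sequence lies eventually in a fixed $\mathcal F_{\delta_*}$ with $\delta_* > 0$. Compactness of the Hausdorff topology on closed subsets of the compact set $X \setminus (U_{\delta_*}(p_0) \cup U_{\delta_*}(p_\infty))$ produces a Hausdorff limit $F^* \in \mathcal F_{\delta_*} \subset \mathcal F$ (using that $\mathcal F_{\delta_*}$ is defined as a Hausdorff closure). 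Upper semicontinuity of $F \mapsto E_\varphi(F)$ at $F^*$ is established by fixing any test measure $\mu \in \mathcal E^1(F^*)$, pushing it forward to $F_n$ via a measurable nearest-point retraction $\pi_n : F^* \to F_n$, and showing $E_\varphi[(\pi_n)_* \mu] \to E_\varphi[\mu]$ from boundedness and uniform continuity of $G$ and $\Re V$ off the diagonal on the fixed compact set; this gives $\limsup_n E_\varphi(F_n) \leq E_\varphi[\mu]$, and taking infimum over $\mu$ yields $\limsup_n E_\varphi(F_n) \leq E_\varphi(F^*)$. So $F^*$ attains the maximum. Finally, for Corollary~\ref{cor:maxmin} to apply one checks that for any $C^1$ vector field $h$ and small enough $\varepsilon$, the flow $\Phi(\varepsilon, \cdot)$ is isotopic to the identity on $X \setminus \{p_0, p_\infty\}$ and keeps $F^*$ at distance $\geq \delta_*/2$ from the singularities, so $F^*_{\varepsilon, h} \in \mathcal F$.

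\textbf{Main obstacle.} The coercivity step is the crux: the residue conditions $r_0 > 1$ and $r_\infty > -1$ are precisely the sharp thresholds at which the competing logarithmic divergences of the Green self-energy $\iint G\,d\mu\,d\mu$ (which inflates $E_\varphi$ as the support shrinks) and of the external-field integral $\int \varphi\,d\mu$ (which deflates $E_\varphi$ as the measure concentrates at the singularity) tip in the desired direction. A secondary technical point is the energy convergence $E_\varphi[(\pi_n)_* \mu] \to E_\varphi[\mu]$ in the USC argument; the bipolar Green's energy is only lower semicontinuous in weak$^*$, so one must upgrade to energy-norm convergence, exploiting the $C^0$-closeness of $\pi_n$ to the identity on a compact set where $G$ is uniformly continuous off the diagonal and applying the Cauchy--Schwarz-type inequality associated with Proposition~\ref{prop:Gposdef}.
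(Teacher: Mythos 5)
Your overall architecture coincides with the paper's: coercivity near $p_0$ and $p_\infty$ from the residue conditions, confinement of a maximizing family to a single compact class $\mathcal F_{\delta_*}$, Hausdorff compactness, semicontinuity of $F \mapsto E_\varphi(F)$, and then Corollary~\ref{cor:maxmin}. However, two steps contain genuine gaps. The minor one is in the coercivity estimate: elements of $\mathcal F$ are Hausdorff \emph{limits} of unions of contours, so they need not be rectifiable arcs, and ``normalized arc-length measure on a short sub-arc'' is not available; even for a rectifiable arc, arc-length measure on a set of diameter $\epsilon$ does not satisfy $\iint G\,d\mu\,d\mu \le \log(1/\epsilon)+O(1)$ with a uniform $O(1)$ (a tightly coiled arc can make the self-energy much larger). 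The paper instead takes a sub-continuum $F_0$ of diameter $\ge \delta$ in the local coordinate (guaranteed by the separation properties of Lemma~\ref{lem:prelim}) and uses its \emph{logarithmic equilibrium measure}, whose self-energy is bounded above by $-\log(\delta/4)$ via the capacity estimate $\mathrm{cap}\ge \mathrm{diam}/4$ for continua. This is repairable, but the repair is exactly the content of Proposition~\ref{prop:residues}.

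The major gap is the upper semicontinuity step. Pushing a test measure $\mu\in\mathcal E^1(F^*)$ forward by a nearest-point retraction $\pi_n:F^*\to F_n$ does not give $E_\varphi[(\pi_n)_*\mu]\to E_\varphi[\mu]$: the retraction can bring distinct points of $\supp(\mu)$ arbitrarily close together (or identify them), and since $G$ blows up logarithmically on the diagonal, the Green self-energy of $(\pi_n)_*\mu$ can increase without bound — in the extreme case $(\pi_n)_*\mu$ acquires an atom and has infinite energy. Uniform continuity of $G$ ``off the diagonal'' does not control this, and the proposed upgrade to energy-norm convergence runs into the same obstruction, since it again requires an upper bound on the self-energy of the transported measure. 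The paper's proof (Proposition~\ref{prop:EFcontinuity}, following Rakhmanov) replaces the retraction by the \emph{balayage} of the equilibrium measure of $F_1$ onto $F_2$: balayage does not increase the Green energy except for an explicit correction term $2\int G_{\Omega}(p,p_\infty)\,d\mu_1(p)$, which is small by the quantitative estimate $G_{\Omega}(p,p_\infty)\le C\sqrt{d(p,F_2)}$ (Lemma~\ref{lem:lemma33}, which in turn needs the diameter lower bound of Lemma~\ref{lem:prelim}), while the external-field term is controlled by a harmonic-extension estimate (Lemma~\ref{lem:lemma34}) after truncating $\varphi$ (Lemma~\ref{lem:boundextfield}, which also requires knowing a priori that $\sup_{\mathcal F}E_\varphi<\infty$, a point your sketch does not address). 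These balayage and Green's-function estimates are the essential missing ingredients; without them the limit set $F^*$ cannot be shown to attain the supremum.
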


The condition $r_0 > 1$ in Theorem~\ref{thm:residues} implies that any $F \in \mathcal F$ that
comes very close to $p_0$ has a small bipolar Green's energy $E_{\varphi}(F)$ in a sense that will be made precise in
Proposition~\ref{prop:residues}.
The condition $r_{\infty} > -1$ gives the same result for $F$ that
come close to $p_\infty$.   

The proof of Theorem~\ref{thm:residues} is in Section~\ref{sec:maxmin}.

\subsection{Final result}\label{sec:main-result}

The final result of the paper is a generalization of 
Theorem~\ref{theo:MFR} to the case of a higher genus
Riemann surface, albeit only for a special case of genus one
that is given by a cubic equation
\begin{equation} \label{Xgenus1} 
	X: \quad w^2 = z(z-x_1)(z-x_2) \end{equation}
with $x_1, x_2 \in \mathbb R$ and $x_1 < x_2 < 0$. 
We use $p = (w,z)$ to denote a generic point on $X$. We use $p_{\infty}$
for the point at infinity and we use $p_0 = (0,0)$, 
$p_1 = (0,x_1)$, $p_2 = (0,x_2)$ to denote the branch
points of \eqref{Xgenus1}.  
Then $X$ has two sheets that are copies of $\mathbb C \setminus
( (-\infty,x_1] \cup [x_2,0])$. The first sheet is such
that $w > 0$ for $z > 0$ on the first sheet, while
$w < 0$ for $z > 0$ on the second sheet. $X$ has an
antiholomorphic involution 
\begin{equation} \label{involution} 
	\sigma : X \to X : (w,z) \mapsto (\overline{w},\overline{z}). \end{equation}
The real locus of $X$ is invariant under $\sigma$, and 
it consists of two parts. The bounded part is the real
oval $C_1$ consisting of the points $p = (w,z)$ with $z \in [x_1,x_2]$, and the unbounded part $C_2$ consists of the
points $p = (w,z)$ with $z \in [0,\infty)$ together with 
the point $p_{\infty}$ at infinity, see Figure~\ref{figovals}.

By means of the Abel map, we identify $X$ with a complex
torus  $\mathbb C \slash (\mathbb Z + \tau \mathbb Z)$ with $\tau \in i \mathbb R^+$, with $p_\infty$  identified with $0$. 
The complex torus has the following $(2,-1)$-Cauchy kernel,
which depends on a parameter $a$. 
Here $(2,-1)$ refers to the fact that this kernel is a quadratic
differential with respect to one variable and a meromorphic vector field
with respect to the other variable.

\begin{proposition} \label{prop:C21kernel}
	For $u,v,a \in  \mathbb C \slash (\mathbb Z + \tau \mathbb Z)$
	we have that 
	\begin{equation} \label{Cauchy21}
		C^{(2,-1)}(u,v;a)  
		= \frac{\theta_1'(0) \theta_1(u-a)^2 \theta_1(v)^2 \theta_1(u-v+2a)}{\theta_1(u-v)\theta_1(v-a)^2 \theta_1(u)^2 \theta_1(2a)}
		\frac{du^2}{dv} \end{equation} 
	satisfies the following.
	\begin{enumerate}
		\item[\rm (a)] For fixed $u,a$, we have that \eqref{Cauchy21}
		is a meromorphic vector field (i.e.\ a $-1$-differential by Remark~\ref{remT10}) in $v$ 
		with a simple pole at $v=u$, a double pole at $v=a$, a double
		zero at $v=0$ and a simple zero at $v = u + 2a$.
		\item[\rm (b)] For fixed $v, a$, we have that  
		\eqref{Cauchy21} is a meromorphic quadratic differential in $u$, with a simple pole
		at $u=v$, a double pole at $u=0$, a double zero at $u=a$, 
		and a simple zero at $u = v-2a$.
		\item[\rm (c)] The kernel \eqref{Cauchy21} is normalized such that
		\begin{equation} \label{CMQD5} 
			C^{(2,-1)}(u,v;a) \frac{dv}{du^2} = \frac{1}{u-v} + O(1) \end{equation}
		as $u \to v$ on the complex torus.
	\end{enumerate}
\end{proposition}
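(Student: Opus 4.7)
The plan is to verify the three properties directly from the explicit theta-function formula \eqref{Cauchy21}, starting with the fact that the expression descends to a well-defined object on the complex torus $\mathbb C/(\mathbb Z+\tau\mathbb Z)$. Under the $1$-shifts $u\mapsto u+1$ and $v\mapsto v+1$, the quasi-periodicity $\theta_1(z+1)=-\theta_1(z)$ introduces an even number of sign changes in the numerator and denominator separately (the squared theta functions each contribute a $+1$, and the four unsquared factors in the $u$-shift or in the $v$-shift contribute two sign flips apiece), so the ratio is invariant. The genuine computation is in the $\tau$-shifts, where one applies $\theta_1(z+\tau)=-e^{-\pi i\tau-2\pi i z}\theta_1(z)$ to each theta factor of \eqref{Cauchy21} and collects exponents. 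A direct bookkeeping shows that the exponential prefactors acquired by $\theta_1(v)^2\,\theta_1(u-v+2a)$ exactly match those acquired by $\theta_1(u-v)\,\theta_1(v-a)^2$ under $v\mapsto v+\tau$, and analogously $\theta_1(u-a)^2\,\theta_1(u-v+2a)$ matches $\theta_1(u-v)\,\theta_1(u)^2$ under $u\mapsto u+\tau$. Since $du$ and $dv$ themselves are invariant under lattice translations, this shows that $C^{(2,-1)}(u,v;a)$ is a well-defined section of the appropriate bundle on the torus.

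Next, I would read off the zeros and poles. Because $\theta_1$ has only simple zeros at lattice points and no other zeros, the divisor of the scalar coefficient in \eqref{Cauchy21} is obtained by listing where each theta factor vanishes. In the variable $v$ (with $u$ and $a$ fixed), the numerator factors $\theta_1(v)^2$ and $\theta_1(u-v+2a)$ yield a double zero at $v=0$ and a simple zero at $v=u+2a$, while the denominator factors $\theta_1(u-v)$ and $\theta_1(v-a)^2$ yield a simple pole at $v=u$ and a double pole at $v=a$. Since on the torus the canonical line bundle is trivialized by $dv$, the factor $1/dv$ in \eqref{Cauchy21} is a globally non-vanishing holomorphic vector field, and so the divisor of $C^{(2,-1)}$ as a meromorphic $(-1)$-differential in $v$ is precisely the divisor of its scalar coefficient; this establishes (a). The identical argument applied to the $u$-variable, using the triviality of the quadratic-differential bundle (trivialized by $du^2$), establishes (b).

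Finally, for the normalization (c), I would use the Taylor expansion of $\theta_1$ at the origin, namely $\theta_1(z)=\theta_1'(0)\,z+O(z^3)$ (which follows from $\theta_1$ being odd with a simple zero at $0$). As $u\to v$, the factors $\theta_1(v-a)^2$, $\theta_1(v)^2$, and $\theta_1(2a)$ in the numerator evaluated at $u=v$ cancel against the same factors in the denominator, leaving
\begin{equation}
C^{(2,-1)}(u,v;a)\frac{dv}{du^2}=\frac{\theta_1'(0)}{\theta_1(u-v)}+O(1)=\frac{1}{u-v}+O(1),
\end{equation}
as claimed. The main obstacle is not conceptual but rather a careful accounting in the quasi-periodicity check; once one confirms that the two $\tau$-shift exponents collapse to the same expression, everything else is a direct reading of the divisor of a product of theta functions.
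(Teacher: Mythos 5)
Your proof is correct and follows essentially the same route as the paper's: the paper likewise verifies double periodicity from the quasi-periodicity relations \eqref{Jacobi1periods} and then reads off the divisor and the normalization from the fact that $\theta_1$ is odd with a simple zero at $0$ and no other zeros modulo the lattice. You simply carry out in detail the bookkeeping that the paper leaves as a direct verification.
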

\begin{proof}
	The periodicity properties \eqref{Jacobi1periods} show that
	\eqref{Cauchy21} is doubly periodic in both variables $u$ and $v$.
	The statements (a), (b), (c) can be directly verified from the formula \eqref{Cauchy21} with the fact that $\theta_1$ has a simple zero at $0$
	and no other zeros (modulo the lattice). 
\end{proof}

\begin{figure}
\begin{center}
\begin{tikzpicture}[scale=0.9]
	\draw[help lines, color=black!10] (-2,-3) grid (1,3);
  \draw[->] (-2, 0) -- (1, 0) node[right] {$z$};
  \draw[->] (0, -3) -- (0, 3) node[above] {$w$};
\node at (-1.5,0.9) {$C_1$};
\node at (0.7,0.9) {$C_2$};
\draw[ domain=-1.8:-1, smooth, samples=1000, variable=\x, blue, thick] plot  (\x,{sqrt(\x*(\x+1)*(\x+1.8))});
\draw[ domain=-1.8:-1, smooth, samples=1000,variable=\x, blue, thick] plot  (\x,{-sqrt(\x*(\x+1)*(\x+1.8))});
\draw[ domain=0:1, smooth, samples=1000,variable=\x, blue, thick] plot  (\x,{-sqrt(\x*(\x+1)*(\x+1.8))});
\draw[ domain=0:1, smooth, samples=1000,variable=\x, blue, thick] plot  (\x,{sqrt(\x*(\x+1)*(\x+1.8))});
\end{tikzpicture} 
\qquad\qquad
\begin{tikzpicture}[scale=4]
\coordinate (tau) at (0,1.2);
\draw [fill=black!10!white] (0,0) to (tau)  to ($(tau)+ (1,0)$) to (1,0) to cycle;

\draw [blue,line width=1, postaction={decorate,decoration={{markings,mark=at position 0.7 with {\arrow[line width=1.5pt]{>}}}} }]($0.5*(tau)$) to node[pos=0,left] {$\frac \tau 2$} node[pos=0.6,below]{$C_1$}($0.5*(tau)+(1,0)$);

\draw [blue,line width=1, postaction={decorate,decoration={{markings,mark=at position 0.7 with {\arrow[line width=1.5pt]{>}}}} }](0,0) to node[pos=0.6,below]{$C_2$} (1,0);

\node [below]at (0,0) {$0$};
\node [above]at (tau) {$\tau$};
\node [above]at ($(tau)+(1,0)$) {$\tau+1$};
\node [below]at (1,0) {$1$};
\end{tikzpicture}
\end{center}
\caption{
Left: depiction of the real locus of an elliptic curve  of the form \eqref{Xgenus1} where $C_1$ is the bounded real oval and $C_2$ is  unbounded. Right: the two real ovals in the complex torus. For a curve of the type \eqref{Xgenus1} the parameter $\tau$ is purely imaginary, $\tau \in i\mathbb R^+$.  
\label{figovals}}
\end{figure}
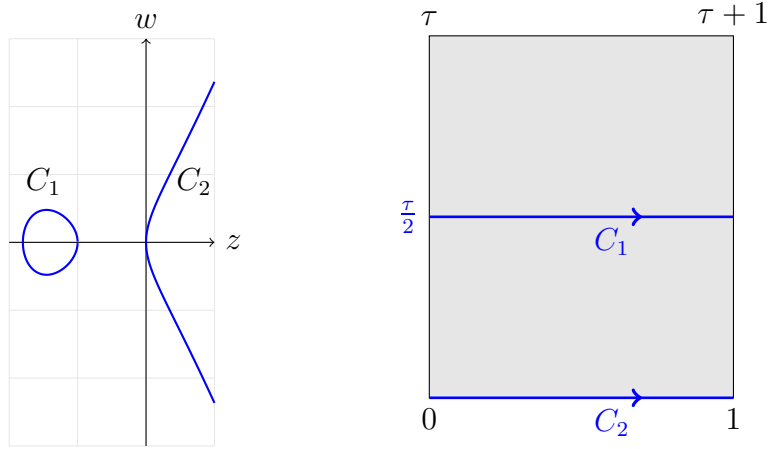

Because of \eqref{CMQD5} we will say that $C^{(2,-1)}(u,v;a)$
has residue $1$ at $u=v$. This notion of residue is conformally
invariant if we transform $u$ and $v$ simultaneously 
by the same conformal map.

As a consistency check, we note that the poles in part (a)
of Proposition~\ref{prop:C21kernel} add up to $u+2a$, and this is
where the simple zero is. The zeros and poles in part (b) both
add up to $v$, as it should be in view of Abel's theorem. It is in
accordance with the fact that 
$C^{(2,-1)}(u,v;a) \frac{dv}{du^2}$ is doubly periodic with periods 
$1$ and $\tau$ in both $u$ and $v$.

With the Abel map we transform  \eqref{Cauchy21} to the Riemann
surface \eqref{Xgenus1} where ${p_{\infty} \in X}$ corresponds
to $0$ on the complex torus.  We still use $C^{(2,-1)}$ to
denote the $(2,-1)$-Cauchy kernel on $X$. 
There is actually an explicit expression in terms of the representation \eqref{Xgenus1}, thus side-stepping the necessity of using $\theta$--functions. Taking $p = (w,z)$,  
	$q = (\wt w, \wt z)$ and $a = (w_0,z_0)$, one may verify that 
	\begin{multline}
		C^{(2,-1)}(p,q;a) \\ =
		\left(\frac { w+\wt w}{z-\wt z} - \frac {w_0+ \wt w}{z_0-\wt z} +\left( \frac {w_0'}{z_0 - \wt z} - \frac {w_0+ \wt w}{(z_0-\wt z)^2} \right) (z-z_0) \right) \frac {d z^2}{2w^2} \frac{ \wt w}{d \wt z},
	\end{multline}
	where $w_0'$ is the derivative $\frac {d}{d z}\sqrt{z(z-x_1)(z-x_2)}\bigg|_{z=z_0}$.

By Proposition~\ref{prop:CMhigher} a critical measure $\mu$ satisfies
\begin{equation} \label{CMQD1} 
	\iint \left(h(p) C(p,q) + h(q) C(q,p)\right) d\mu(p) d\mu(q) 
	= \int h dV \, d\mu \end{equation} 
for every $C^1$ vector field $h$. 
Choosing $h = C^{(2,-1)}(p, \cdot)$ allows us to extract relevant
information from \eqref{CMQD1}.

Now we arrive at a generalization of Theorem~\ref{theo:MFR} 
to a symmetric genus one case.

\begin{theorem} \label{theo:QDhigher}
	Let $X$ be given by \eqref{Xgenus1}.
	Suppose that the external field $\varphi$ is given by $\varphi = \Re V$ for a multi-valued, locally meromorphic function $V$ on $X$ with a single-valued real part and that $dV$ is a  meromorphic differential on $X$. 
	Suppose $\mu$ is a critical measure in the external
	field $\varphi = \Re V$ that is invariant under
	the involution \eqref{involution}. Suppose $\supp(\mu) \subset X \setminus (\{p_{\infty}\} \cup C_1)$ where $C_1$ is the real
	bounded oval of $X$.  
	Then there exists an $a \in C_1$ such that the following holds.
	\begin{enumerate}
		\item[\rm (a)] Then
		\begin{equation} \label{QDhigher1} 
			\left[ \int C(u,q) d\mu(q) - \frac{dV(u)}{2} \right]^2
			= \upomega(u), \qquad a.e.\ \text{ on } X, \end{equation}
		where $\upomega$ is the meromorphic quadratic differential
		\begin{equation} \label{QDhigher2} \upomega(u) = \left(\frac{dV(u)}{2} \right)^2
			- \int (C(u,q) dV(u) - C^{(2,-1)}(u,q;a) dV(q) ) 
			d\mu(q).  \end{equation}
		\item[\rm (b)] The support of $\mu$ is a union of analytic
		arcs or loops that are maximal trajectories of $-\upomega$.
		The measure $\mu$ is absolutely continuous with respect
		to arclength, and it is given by the differential 
		\[ d\mu(u) = \frac{1}{\pi i} \upomega(u)^{1/2},
		\qquad u \in \Sigma := \supp(\mu), \]
		with an appropriate branch of the square root on each
		open 	arc or closed loop.
		\item[\rm (c)] The bipolar Green's potential
		\[ G^\mu(p) := \int G(p,q) d\mu(q) \]
		of $\mu$ satisfies
		\[ 2 G^\mu(p) + \Re V(p) = c_j, \qquad p \in \Sigma_j \]
		with a constant $c_j$ that can be different on
		each component $\Sigma_j$ of $\Sigma$.
		\item[\rm (d)] Any point $p \in \Sigma$ that is
		not a zero of $\upomega$ has a neighborhood $D$ such
		$D \cap \Sigma$ is an analytic arc, and
		\begin{equation}\label{eq:S-property}\frac{\partial}{\partial n_+}
		\left(2 G^\mu + \Re V \right)
		= \  \frac{\partial}{\partial n_+}
		\left(2 G^\mu + \Re V \right)
		\text{ on } D \cap \Sigma, \end{equation}
		where $\frac{\partial}{\partial n_{\pm}}$
		denote the two normal derivatives to $\Sigma$ inside
		$D \cap \Sigma$.
	\end{enumerate}
\end{theorem}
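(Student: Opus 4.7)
\emph{Approach.} The plan is to adapt the genus-zero argument of Theorem~\ref{theo:MFR}: in the plane, the identity $(F^\mu - V'/2)^2 = Q$ follows by inserting the Cauchy vector field $h(v) = 1/(z-v)$ into the critical-measure equation \eqref{CM1}. The torus analog of $h(v) = 1/(z-v)$ is the vector field $v \mapsto C^{(2,-1)}(u,v;a)$ from Proposition~\ref{prop:C21kernel}; the auxiliary point $a$ appears because on a genus-one surface no meromorphic vector field can have just a single simple pole, and it will be fixed in $C_1$ using the $\sigma$-invariance of $\mu$. The starting point is the critical-measure equation in Proposition~\ref{prop:CMhigher}(b): relabeling $p \leftrightarrow q$ in the symmetric sum $h(p)C(p,q) + h(q)C(q,p)$ reduces $D_{V,h}(\mu)=0$ to the scalar Euler-Lagrange form
\[
\int h(p)\bigl(2\mathcal{C}^{\mu}(p) - dV(p)\bigr) d\mu(p) = 0, \qquad \mathcal{C}^{\mu}(p) := \int C(p,q)\, d\mu(q),
\]
for every vector field $h$ that is smooth on a neighborhood of $\supp(\mu)$.

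\emph{Contour deformation and key residue.} For $u,a \in X\setminus(\supp(\mu)\cup\{p_\infty\})$ the vector field $h(v)=C^{(2,-1)}(u,v;a)$ is smooth on $\supp(\mu)$ and may be inserted into the Euler-Lagrange equation to give
\[
2\int C^{(2,-1)}(u,v;a)\, \mathcal{C}^{\mu}(v)\, d\mu(v) = \int C^{(2,-1)}(u,v;a)\, dV(v)\, d\mu(v).
\]
The crux is to evaluate the left-hand side as $(\mathcal{C}^{\mu}(u))^2$. Using the Plemelj-Sokhotski jump $\mathcal{C}^{\mu}_+ - \mathcal{C}^{\mu}_- = \pm 2\pi i\, d\mu$ on $\Sigma := \supp(\mu)$ and the principal-value identity $\mathcal{C}^{\mu} = \tfrac{1}{2}(\mathcal{C}^{\mu}_+ + \mathcal{C}^{\mu}_-)$ there, the left-hand side rewrites as a jump integral across $\Sigma$ of the meromorphic differential $\eta(v) := C^{(2,-1)}(u,v;a) (\mathcal{C}^{\mu}(v))^2$ (a 1-form in $v$, quadratic differential in $u$). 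Its poles on $X\setminus\Sigma$ are a simple pole at $v=u$ and a double pole at $v=a$; the double zero of $C^{(2,-1)}$ at $p_\infty$ cancels the double pole of $(\mathcal{C}^{\mu})^2$ there. The vanishing of total residues on $X$ expresses the jump integral as $(\mathcal{C}^{\mu}(u))^2 - R(u,a,\mu)$, where by \eqref{CMQD5} the residue at $v=u$ produces exactly $\pm(\mathcal{C}^{\mu}(u))^2$ and the residue at $v=a$ gives an $a$-dependent correction $R(u,a,\mu)$.

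\emph{Choice of $a$ in $C_1$.} A local Laurent expansion at $v=a$ shows that $R(u,a,\mu)$ is a linear combination of $(\mathcal{C}^{\mu}(a))^2$ and $\mathcal{C}^{\mu}(a)\cdot(\mathcal{C}^{\mu})'(a)$, so it vanishes identically in $u$ as soon as $\mathcal{C}^{\mu}(a)=0$. Such $a\in C_1$ exists by the following real-structure argument: the $\sigma$-invariance of $\mu$, combined with the symmetry $G(\sigma(p),\sigma(q)) = G(p,q)$, gives $\mathcal{C}^{\mu}\circ\sigma = \overline{\mathcal{C}^{\mu}}$, so $\mathcal{C}^{\mu}$ restricts to a real-valued 1-form along $C_1$; since $\mathcal{C}^{\mu}$ has purely imaginary periods (as $G$ is single-valued and real), the integral $\oint_{C_1}\mathcal{C}^{\mu}$ is both real and purely imaginary, hence zero, and continuity on $C_1\cong S^1$ forces a zero of $\mathcal{C}^{\mu}$ somewhere on $C_1$.

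\emph{Parts (b)-(d) and main obstacle.} With such $a$ fixed, $R(u,a,\mu) = 0$ and a direct calculation gives
\[
\upomega(u) - \bigl(\mathcal{C}^{\mu}(u) - dV(u)/2\bigr)^2 = \int C^{(2,-1)}(u,v;a)\, dV(v)\, d\mu(v) - (\mathcal{C}^{\mu}(u))^2 = 0,
\]
proving (a) on $X\setminus\supp(\mu)$; since the right-hand side of \eqref{QDhigher2} is manifestly meromorphic in $u$ on $X$, the identity extends. Parts (b)-(d) then follow the classical Mart\'inez-Finkelshtein-Rakhmanov scheme: combining the Plemelj-Sokhotski jump with the Euler-Lagrange relation $\mathcal{C}^{\mu}_+ + \mathcal{C}^{\mu}_- = dV$ on $\Sigma$ (the differential form of (c), immediate from the scalar equation in the first paragraph) yields $\mathcal{C}^{\mu}_\pm = dV/2 \mp \pi i\, d\mu$, so $\upomega = -\pi^2 (d\mu)^2$ on $\Sigma$ and $d\mu = \upomega^{1/2}/(\pi i)$; positivity of $d\mu$ forces $\upomega^{1/2}$ to be purely imaginary along $\Sigma$, the maximal-trajectory condition of (b), while (c) and the $S$-property (d) are respectively the tangential (imaginary) and normal (real) components of the same relation $\mathcal{C}^{\mu}_+ + \mathcal{C}^{\mu}_- = dV$. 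The main obstacle is the residue identification at $v=a$ together with the real-structure argument placing $a$ in $C_1$: unlike the plane, where the partial-fraction expansion $\tfrac{1}{(v-t)(u-v)} = \tfrac{1}{u-t}\bigl(\tfrac{1}{v-t} + \tfrac{1}{u-v}\bigr)$ closes the identity with no auxiliary parameter, the genus-one topological obstruction forces the parameter $a$ into the computation, and making it real is the genuinely new ingredient.
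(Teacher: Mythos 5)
Your high-level strategy coincides with the paper's: test the critical-measure equation against the vector field $h=C^{(2,-1)}(u,\cdot\,;a)$ and use the $\sigma$-invariance of $\mu$ to place $a$ on $C_1$ as a zero of $\mathcal C^\mu(u)=\int C(u,q)\,d\mu(q)$. However, the way you establish the key identity $\iint\bigl(C(p,q)C^{(2,-1)}(u,p;a)+C(q,p)C^{(2,-1)}(u,q;a)\bigr)d\mu\,d\mu=(\mathcal C^\mu(u))^2$ is circular. Your Plemelj--Sokhotski/residue computation treats $\mathcal C^\mu$ as a sectionally meromorphic differential with jump $\pm2\pi i\,d\mu$ across a system of analytic arcs carrying an absolutely continuous $\mu$ --- but that $\supp(\mu)$ consists of analytic arcs and that $\mu$ has a density is precisely part (b) of the theorem, which is a conclusion, not a hypothesis; a critical measure is a priori just a positive measure. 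The same objection applies to your opening reduction to the ``scalar Euler--Lagrange form'' $\int h\,(2\mathcal C^\mu-dV)\,d\mu=0$: splitting the symmetric kernel $h(p)C(p,q)+h(q)C(q,p)$ into two copies of $\iint h(p)C(p,q)\,d\mu(p)\,d\mu(q)$ requires the individual (principal-value) integrals to make sense, which again presupposes regularity. The paper avoids all of this by proving the identity (its Proposition~\ref{prop:CMQD}) for an \emph{arbitrary} $\sigma$-invariant compactly supported measure: the difference of the two integrands is, for each fixed $p,q$, a meromorphic quadratic differential in $u$ whose simple poles at $u=p$ and $u=q$ cancel, so after integration one has a quadratic differential with at most a double pole at $p_\infty$ and a double zero at $u=a$, and on a genus-one curve such an object must vanish unless $a$ is a branch point. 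This is the genuine genus-one substitute for the pointwise divided-difference identity \eqref{CMQD3}, and it is what makes the argument non-circular.

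A second, independent gap: you do not address the degenerate case in which the only zeros of $\mathcal C^\mu$ on $C_1$ occur at the branch points $p_1,p_2$. These are half-periods of the torus, where $\theta_1(2a)=0$, so $C^{(2,-1)}(u,v;a)$ is not even defined there; and the paper's uniqueness argument for quadratic differentials also fails at branch points (the differentials $(z-x_j)\,dz^2/w^2$ have double zeros there and a double pole at $p_\infty$). The paper handles this by perturbing $\mu_t=\mu+t\delta_q$ with $q\in C_2$, showing $C(\cdot,q)$ cannot vanish at both $p_1$ and $p_2$, obtaining a non-branch-point zero $a_t$ for $t>0$, and passing to the limit $t\to0^+$. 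Your period argument for the existence of some zero of $\mathcal C^\mu$ on $C_1$ is fine (and equivalent to the paper's extremum argument for $G^\mu$ on $C_1$), but without the perturbation step the choice of an admissible $a$ is not secured. Parts (b)--(d), once (a) is in hand, follow the same local arguments as in the paper.
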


The proof of Theorem~\ref{theo:QDhigher} can be found in Section~\ref{sec:proof-of-main-result}.
The equality \eqref{eq:S-property} is an analogue of the $S$-property \eqref{Sprop2} in the planar case, see also \cite[Equation (5.30)]{MFR11} and \cite[Definition 2.1]{KS15}.
Note that the equality of the normal derivatives is independent of the coordinate chart.
The identity in \eqref{eq:S-property} is equivalent to the $S$-property defined in Definition~\ref{def:S-property} by the Cauchy-Riemann equations.

\subsection{Overview of the rest of the paper}

In the next section we motivate the theory developed 
in the paper by relating it to certain periodic tiling models.
We focus our discussion on  lozenge tilings with periodic
weightings. 

In Section~\ref{sec:proofs-critical}, we prove Propositions~\ref{prop:CMhigher}~and~\ref{prop:CShigher}, which generalize the results \cite[Lemma~3.1]{MFR11} on critical measures and \cite[Proposition 4.2]{KS15} (see also \cite[Section~9.10]{Rak12}) on critical sets in the plane to compact Riemann surfaces of higher genus.
In Section~\ref{sec:maxmin}, we discuss the max-min energy problem
and prove Theorem~\ref{thm:residues}.
We rely heavily on techniques developed in \cite{KR05,KS15}
and especially \cite{Rak12}.
In Section~\ref{sec:critical-measures-and-quadratic-differentials}, we prove Theorem~\ref{theo:QDhigher}; see in particular Section~\ref{sec:proof-of-main-result}.
The appendix contains  a proof of Proposition~\ref{prop:bipolarGreen} using the Green's function for the Laplacian from Riemannian geometry.

\section{Motivation: periodic tiling models}
\label{sec:motivation}

\subsection{Lozenge tilings of a hexagon}

Our motivation comes from two-dimensional random tiling models with periodic weightings \cite{KOS06}. The main examples are domino tilings of the Aztec diamond \cite{Berg21,CJ16,DK21} and lozenge tilings of a hexagon \cite{Cha20,CDKL20}.  
We focus in this discussion on the latter one. See
the figure for the $ABC$ hexagon and a possible tiling with lozenges
of three types.
The hexagon has vertices at $(0,0)$, $(B,0)$, $(B+C,C)$, $(B+C,A+C)$, $(C,A+C)$ 
and $(0,A)$, where $A,B,C$ are positive integers. The vertices 
of each lozenge in a lozenge tiling have integer coordinates,
see Figure~\ref{fig:ABChexagon}.	

\begin{figure}[t] 
	\centering
	\begin{tikzpicture}[scale=0.4]
		\draw[help lines, color=black!10] (0,0) grid (8,7);
		\draw (0,0) -- (5,0) -- (8,3) -- (8,7) -- (3,7) -- (0,4) -- cycle;
		\draw (-1.2,2) node[right] {$A$}; \draw(7.7,5) node[right]{$A$};
		\draw (3,0) node[below] {$B$}; \draw (5,7) node[above]{$B$};
		\draw (6.5,1.5) node[below]{$C$}; \draw (0.5,5.7) node[right]{$C$};
	\end{tikzpicture} 
	\qquad
	\begin{tikzpicture}[scale=0.4]
		
		\draw[white] (-1,-1) -- (9,9);
		
		\filldraw[draw=black, fill=blue!20] (0,0) -- (1,0) node[above left]{} -- (1,1) -- (0,1) -- cycle;
		\filldraw[draw=black, fill=blue!20] (1,0) -- (2,0) node[above left]{} -- (2,1) -- (1,1) -- cycle;
		\draw[draw=black, fill=yellow!50] (2,0) -- (3,1) -- (3,2) -- (2,1) -- cycle;
		\draw[draw=black, fill=red!20] (2,0) -- (3,0) -- (4,1) -- (3,1) -- cycle;
		\draw[draw=black,fill=red!20] (3,0) -- (4,0) -- (5,1) -- (4,1) -- cycle;
		
		\filldraw[fill=blue!20] (0,1) -- (1,1) -- (1,2) -- (0,2) -- cycle;
		\filldraw[fill=yellow!50] (1,1) -- (2,2) -- (2,3) -- (1,2) -- cycle;
		\filldraw[fill=red!20] (1,1) -- (2,1) -- (3,2) -- (2,2) -- cycle;
		\filldraw[fill=blue!20] (3,1) -- (4,1) -- (4,2) -- (3,2) -- cycle;
		\filldraw[fill=yellow!50] (4,1) -- (5,2) -- (5,3) -- (4,2) -- cycle;
		\filldraw[fill=red!20] (4,1) -- (5,1) -- (6,2) -- (5,2) -- cycle;
		
		\filldraw[fill=yellow!50] (0,2) -- (1,3) -- (1,4) -- (0,3) -- cycle;
		\filldraw[fill=red!20] (0,2) -- (1,2) -- (2,3) -- (1,3) -- cycle;
		\filldraw[draw=black, fill=blue!20] (2,2) -- (3,2) node[above left]{} -- (3,3) -- (2,3) -- cycle;
		\filldraw[fill=yellow!50] (3,2) -- (4,3) -- (4,4) -- (3,3) -- cycle; 
		\filldraw[fill=red!20] (3,2) -- (4,2) -- (5,3) -- (4,3) -- cycle;
		\filldraw[draw=black, fill=blue!20] (5,2) -- (6,2) node[above left]{} -- (6,3) -- (5,3) -- cycle;
		\filldraw[fill=yellow!50] (6,2) -- (7,3) -- (7,4) -- (6,3) -- cycle;
		
		\filldraw[fill=yellow!50] (0,3) -- (1,4) -- (1,5) -- (0,4) -- cycle;
		\filldraw[fill=yellow!50] (1,3) -- (2,4) -- (2,5) -- (1,4) -- cycle;
		\filldraw[fill=red!20] (1,3) -- (2,3) -- (3,4) -- (2,4) -- cycle;
		\filldraw[fill=red!20] (2,3) -- (3,3) -- (4,4) -- (3,4) -- cycle;
		\filldraw[fill=yellow!50] (4,3) -- (5,4) -- (5,5) -- (4,4) -- cycle;
		\filldraw[fill=red!20] (4,3) -- (5,3) -- (6,4) -- (5,4) -- cycle;
		\filldraw[fill=red!20] (5,3) -- (6,3) -- (7,4) -- (6,4) -- cycle;
		\filldraw[fill=yellow!50] (7,3) -- (8,4) -- (8,5) -- (7,4) -- cycle;
		
		\filldraw[fill=yellow!50] (1,4) -- (2,5) -- (2,6) -- (1,5) -- cycle;
		\filldraw[draw=black, fill=blue!20] (2,4) -- (3,4) node[above left]{} -- (3,5) -- (2,5) -- cycle;
		\filldraw[fill=yellow!50] (3,4) -- (4,5) -- (4,6) -- (3,5) -- cycle;
		\filldraw[fill=red!20] (3,4) -- (4,4) -- (5,5) -- (4,5) -- cycle;
		\filldraw[fill=yellow!50] (5,4) -- (6,5) -- (6,6) -- (5,5) -- cycle;
		\filldraw[fill=red!20] (5,4) -- (6,4) -- (7,5) -- (6,5) -- cycle;
		\filldraw[fill=red!20] (6,4) -- (7,4) -- (8,5) -- (7,5) -- cycle;
		
		\filldraw[fill=blue!20] (2,5) -- (3,5) -- (3,6) -- (2,6) -- cycle;
		\filldraw[fill=yellow!50] (3,5) -- (4,6) -- (4,7) -- (3,6) -- cycle;
		\filldraw[fill=yellow!50] (4,5) -- (5,6) -- (5,7) -- (4,6) -- cycle;
		\filldraw[fill=red!20] (4,5) -- (5,5) -- (6,6) -- (5,6) -- cycle;
		\filldraw[fill=blue!20] (6,5) -- (7,5) -- (7,6) -- (6,6) -- cycle;
		\filldraw[fill=blue!20] (7,5) -- (8,5) -- (8,6) -- (7,6) -- cycle;
		
		\filldraw[fill=red!20] (2,6) -- (3,6) -- (4,7) -- (3,7) -- cycle;
		\filldraw[fill=yellow!50] (4,6) -- (5,7) -- (5,8) -- (4,7) -- cycle;
		\filldraw[draw=black, fill=blue!20] (5,6) -- (6,6) node[above left]{} -- (6,7) -- (5,7) -- cycle;
		\filldraw[draw=black, fill=blue!20] (6,6) -- (7,6) node[above left]{} -- (7,7) -- (6,7) -- cycle;
		\filldraw[draw=black, fill=blue!20] (7,6) -- (8,6) node[above left]{} -- (8,7) -- (7,7) -- cycle;
		
		\draw[fill=red!20] (3,7) -- (4,7) -- (5,8) -- (4,8) -- cycle;
		\draw[fill=blue!20] (5,7) -- (6,7) -- (6,8) -- (5,8) -- cycle;
		\draw[fill=blue!20] (6,7) -- (7,7) -- (7,8) -- (6,8) -- cycle;
		\draw[fill=blue!20] (7,7) -- (8,7) -- (8,8) -- (7,8) -- cycle;
	\end{tikzpicture}
	\caption{The $ABC$ hexagon (left) with a random tiling (right). \label{fig:ABChexagon}}
\end{figure}
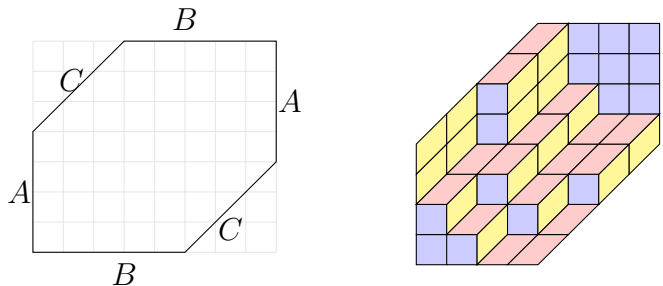

We assign a weight to each lozenge depending on its type and on the 
position it has in the 
hexagon. We denote the weights by  $\lozengeblue(x,y)$, $\lozengeyellow(x,y)$, and $\lozengered(x,y)$, where $(x,y) \in \mathbb Z^2$ is the position of the lower left vertex in case of  $\lozengeblue(x,y)$, $\lozengeyellow(x,y)$, and the position
of the lower right vertex  in case of $\lozengered(x,y)$. 

The weight $w(\mathcal T)$ of a tiling $\mathcal T$ is the
the product of the weights of the lozenges in the tiling, and the probability
of $\mathcal T$ is proportional to its weight 
\begin{equation} \label{eq:probT} 
	\mathbb P(\mathcal T) = \frac{1}{Z} w(\mathcal T),
	\qquad Z = \sum_{\mathcal T} w(\mathcal T). \end{equation}
Two weightings are equivalent if they lead to the same
probabilities \eqref{eq:probT}.
It is known that this model is determinantal, for any choice
of positive weights.

\subsection{Periodic weightings}

The paper \cite{DK21} gives a formula for the correlation kernel
as a double contour integral in case of periodic weighting,
where a weighting is periodic if there exist positive integers $p$ and $q$ such that 
\begin{equation} \label{eq:weightperiodic} 
	\lozengeblue( x+mp,y+nq) = \lozengeblue(x,y),
	\qquad (x,y,  m,n \in \mathbb Z^2) \end{equation}
and similarly for $\lozengeyellow$ and $\lozengered$.
To describe the formula, we need that  
$\lozengered(x,y) = 1$ for every $(x,y) \in \mathbb Z^2$,
which is an assumption one can make without
loss of generality, as for any weighting there is an equivalent
weighting with this property. 
Then for each $x \in \mathbb Z$ we consider a
matrix $T_x$ with entries
\begin{equation} \label{eq:transitionTx} 
	T_x(y,y') = \begin{cases} w_{\begin{tikzpicture}[scale=0.2]
				\draw[fill=blue!20] (0,0) rectangle (1,1);
		\end{tikzpicture}}(x,y),  & \text{ if } y'=y, \\
		w_{\begin{tikzpicture}[scale=0.2]
				\draw[fill=yellow!50] (0,0) -- (1,1) -- (1,2) -- (0,1) -- cycle;
		\end{tikzpicture}}(x,y), & \text{ if } y'=y+1, \\
		0, & \text{ otherwise,} \end{cases} \quad
	\text{ for } (y,y') \in \mathbb Z^2, \end{equation}
that describes a transition
from the horizontal level $x$ to $x+1$. 
Each $T_x$ is a two sided infinite matrix with two non-zero diagonals. 

Consider a periodic weighting with periods $p$ and $q$.
Then $T_x$ is a block Toeplitz matrix with blocks of size 
$q \times q$ because of the periodicity in the vertical direction. 
The symbol of $T_x$ 
is the matrix valued function $z \mapsto A_x(z)$ with
\begin{equation} \label{eq:symbolAx} 
	A_x(z) = \begin{pmatrix} T_x(y,y') \end{pmatrix}_{y,y'= 0}^{q-1} + z \begin{pmatrix} T_x(y,y'+q) \end{pmatrix}_{y,y'=0}^{q-1},
	\qquad z \in \mathbb C. \end{equation}
Due to the periodicity in the horizontal direction 
we have $T_{x+p} = T_x$, and $A_{x+p} = A_x$
for every $x$. Write 
\begin{equation} \label{eq:symbolA} 
	A(z) = A_0(z) A_1(z) \cdots A_{p-1}(z). \end{equation}

The positions of the $\begin{tikzpicture}[scale=0.2]
	\draw[fill=blue!20] (0,0) rectangle (1,1);
\end{tikzpicture}$ and $\begin{tikzpicture}[scale=0.2]
	\draw[fill=yellow!50] (0,0) -- (1,1) -- (1,2) -- (0,1) -- cycle;
\end{tikzpicture}$ lozenges in a random tiling give a random point process 
inside the hexagon. This random point process is determinantal
and a formula for the correlation kernel $K$ is given
(in a more general setting) in \cite[Theorem~4.7]{DK21}.
Let us assume for simplicity that $A=qN$, $C=qM$, $B+C = pL$, 
for certain integers $N,M,L$ where $A,B,C$ are the dimensions of the hexagon. Then 
$K((px,qy),(px',qy'))$ is equal to the $(0,0)$ entry
of the matrix 
\begin{multline} \label{eq:K00entry}
	= - \frac{\chi_{x>x'}}{2\pi i}
	\oint_{\gamma}  A^{x-x'}(z) z^{y'-y} \frac{dz}{z} \\
	+ \frac{1}{(2\pi i)^2}
	\oint_{\gamma} \oint_{\gamma}
	A^{L-x'}(w) R_N(w,z)
	A^x(z) \frac{w^{y'}}{z^{y+1} w^{M+N}} dz dw 
\end{multline}
where the integrals are taken entrywise,
and $R_N$ is a bivariate polynomial of degrees $\leq N-1$
in both variables  that has the reproducing kernel property
\begin{equation} \label{eq:RNreproducing} 
	\frac{1}{2\pi i} \oint_{\gamma} P(w) \frac{A^L(w)}{w^{M+N}} R_N(w,z) dw = P(z)  \end{equation}
for any  matrix valued polynomial $P$ of degree $\leq N-1$.
The contour $\gamma$ in \eqref{eq:K00entry} and \eqref{eq:RNreproducing} is a simple closed contour going around
the origin once in the positive direction.
Other entries of the matrix \eqref{eq:K00entry} give the correlation
kernel at positions $(px,qy+j), (px',qy'+j')$ when $0\leq j,j' \leq q-1$
and there is a similar, but somewhat more complicated
formula 
for the correlation kernel at arbitrary points when the first 
coordinates of the points are not multiples of the period $p$.

The reproducing kernel $R_N$ is related to matrix valued
orthogonal polynomials (MVOPs). The monic MVOP $P_n$ of degree $n$ (if it exists) satisfies
\begin{equation} \label{eq:Pnorthogonal} 
	\frac{1}{2\pi i} \oint_{\gamma} P_n(z)  \frac{A(z)^L}{z^{M+N} } z^k dz
	= H_n \delta_{n,k} \quad \text{ for } 0 \leq k \leq n, \end{equation}
with $\det H_n \neq 0$.
If all MVOP up to degree $N-1$ exist then
\begin{equation} \label{eq:RNsum} R_N(w,z) = \sum_{n=0}^{N-1} P_n^T(w) H_n^{-1} P_n(z). \end{equation}
In our setting it can be shown that the MVOP of degree $N$ exists,
as well as certain other degrees, see also \cite[Proposition 1.1]{GK21}, but not for all degrees up to $N-1$, and the sum
formula \eqref{eq:RNsum} is not valid.  
There is another formula for $R_N$
in terms of the solution of a Riemann-Hilbert problem
which we describe next. 

\subsection{Riemann-Hilbert problem} 

Consider 
\begin{equation} \label{eq:Yzformula} 
	Y(z) = \begin{pmatrix} P_N(z) & \ds \frac{1}{2\pi i} \oint_{\gamma} \frac{P_N(s) 	A(s)^L}{s^{M+N}} \frac{ds}{s-z} \\
		- H_{N-1}^{-1}	P_{N-1}(z) & \ds -\frac{H_{N-1}^{-1}}{2\pi i} \oint_{\gamma} \frac{P_{N-1}(s) 	A(s)^L}{s^{M+N}} \frac{ds}{s-z}  \end{pmatrix},
	\quad z \in \mathbb C \setminus \gamma, \end{equation}
where $P_N$ and $P_{N-1}$ are the monic MVOP of degrees $N$ and $N-1$. 
This matrix valued function of size $2q \times 2q$ is defined and
analytic for $z \in \mathbb C \setminus \gamma$ and satisfies
\begin{equation} \label{eq:Yjump} 
	Y_+(z) = Y_-(z) \begin{pmatrix} I_q & \ds \frac{A(z)^L}{z^{M+N}} \\
		0 & I_q \end{pmatrix}, \qquad z \in \gamma, \end{equation}
where $Y_+(z)$ and $Y_-(z)$ denote the limiting values of $Y(z')$ 
as $z' \to z  \in \gamma$ from the interior and
exterior region, respectively. In addition, $Y$ has
the asymptotic behavior
\begin{equation} \label{eq:Yatinfty}
	Y(z) = (I_{2q}+ O(z^{-1})) \begin{pmatrix} z^N I_q & 0 \\ 0 & z^{-N} I_q
	\end{pmatrix} \quad \text{ as } z \to \infty.
\end{equation}
The Riemann-Hilbert (RH) problem \eqref{eq:Yjump}--\eqref{eq:Yatinfty} 
for MVOP was formulated in \cite{CM12,GIM11} as  
a generalization of the well-known  RH problem for orthogonal
polynomials due to Fokas, Its and Kitaev \cite{FIK92}.
The reproducing kernel is expressed in terms of $Y$ via
the formula \cite{Del10, DK21}
\begin{equation} \label{eq:RNRHproblem}
	R_N(w,z) = \frac{1}{w-z} \begin{pmatrix} 0 & I_q \end{pmatrix}
	Y^{-1}(w) Y(z) \begin{pmatrix} I_q \\ 0 \end{pmatrix}, 
\end{equation}
which can be seen as a matrix valued Christoffel-Darboux formula 
for the sum \eqref{eq:RNsum}.

Of interest is the regime where $N,M,L \to \infty$ in such a
way that their ratios approach certain finite, non-zero limits.
A steepest descent analysis of the RH problem would give asymptotic information
on the reproducing kernel \eqref{eq:RNRHproblem} 
which could then in turn be used
in the double integral  \eqref{eq:K00entry} to find 
limiting behavior of the correlation kernel. This has been done successfully for the case of
domino tilings of the Aztec diamond with $2\times 2$ periodic weightings \cite{DK21}. For periodic hexagon tilings 
with periodicities at most two, the matrix valued
orthogonality can be reduced to scalar orthogonality
on a contour in the complex plane  \cite{Cha20,Cha21, CDKL20,GK21} where notions
from logarithmic potential theory (such as Theorem~\ref{theo:MFR})
apply. The asymptotic analysis of periodic hexagon tilings
with  higher periodicities  remains an open problem, 
since one of the difficulties is to normalize
the RH problem at infinity. For the RH problem for 
orthogonal polynomials on the real line \cite{Dei99,DKMVZ99}, 
this is done with an equilibrium measure in an external field \cite{ST97}, and we are faced with the question of what 
appropriate equilibrium measures in a matrix-valued setting
are when the matrix valued orthogonality cannot be reduced to
scalar orthogonality in the plane.

\subsection{Transformation $Y \mapsto Z$}
To proceed we follow \cite[Section 5.3]{DK21} to
perform a transformation of the RH problem by using the
spectral decomposition $A(z) = E(z) \Lambda(z) E(z)^{-1}$
of \eqref{eq:symbolA}
where $\Lambda(z)$ is a diagonal matrix with the eigenvalues
$\lambda_1(z), \ldots, \lambda_q(z)$ of $A(z)$ on the diagonal
and the columns of $E(z)$ are eigenvectors of $A(z)$. The
transformation 
\begin{equation} \label{eq:Zdef} 
	Z = Y \begin{pmatrix} E & 0 \\ 0 & E \end{pmatrix} \end{equation}
then gives a matrix valued function $Z$ with the jump condition 
\begin{equation} \label{eq:Zjump} Z_+(z) = Z_-(z) \begin{pmatrix} I_q &  
		\ds \frac{\Lambda(z)^L}{z^{M+N}} \\
		0 & I_q \end{pmatrix} \quad z \in \gamma. \end{equation}
The eigenvalues $\lambda_j(z)$ and eigenvectors in $E(z)$ will 
not be entire functions of the $z$ variable and will 
be single-valued only
if we apply certain branch cuts. The definition~\eqref{eq:Zdef} 
will also create a jump of 
$Z$ on these cuts that  takes the form (with an appropriate choice
of the eigenvectors in $E(z)$)
\begin{equation} \label{eq:Zjump2} 
	Z_+ = Z_- \begin{pmatrix} \sigma & 0 \\ 0 & \sigma \end{pmatrix}
	\quad \text{ on branch cuts}, 
\end{equation}
for a permutation matrix $\sigma \in S_q$ that could depend
on the branch cut. 

\subsection{Scalar orthogonality on the Riemann surface} \label{section45}
The characteristic equation 
\begin{equation} \label{eq:speccurve}  
	X : \quad	\det(\lambda I - A(z)) = 0 \end{equation}
is a polynomial equation in $z$ and $\lambda$.  
It is the spectral curve in the sense of \cite{KOS06}
(up to a sign change in $\lambda$ in case $n$ is odd),
see in particular  \cite[p.~1048]{KOS06}. 
The algebraic curve $X$  is a Harnack
curve which implies that it has the maximal number of 
real ovals \cite[Theorem~5.1]{KOS06}. In particular it
has only real branch points when viewed as a $q$-fold
cover of the $z$-plane, and the branch cuts in \eqref{eq:Zjump2}
are on the real line only.
The case where $X$ has genus $\geq 1$ is of special interest as it
allows for the appearance of a smooth (or gas) phase in
the large scale limit,  in addition to the more familiar 
solid and rough (or liquid) phases  \cite{KOS06}.
We show that the matrix valued orthogonality leads to
scalar orthogonality conditions on $X$. See \cite{Ber21a,Ber21b,Ber22,Cha21,FOX20} 
for other recent contributions to orthogonality on a Riemann surface.

Returning to $Z$, we choose a row number $j \in \{1, \ldots, 2q\}$.
Then for  $k=1, \ldots, q$, we consider $Z_{j,k}$
as a function on the $k$th sheet of $X$. The jump
conditions \eqref{eq:Zjump} and \eqref{eq:Zjump2} show that this defines a meromorphic function
$f_j$ on $X$ with a pole at the point(s) at infinity.  
The entry $Z_{j,q+k}$ for $k=1, \ldots, q$
is also considered on the $k$th sheet, and the jump
properties \eqref{eq:Zjump} and \eqref{eq:Zjump2} imply
that they define a holomorphic function $\psi_j$ on
$X \setminus \gamma_X$
(we use $\gamma_X$ to denote the contour on $X$ consisting of a copy of
$\gamma$ on each of the sheets) with the jump
\begin{equation} \label{eq:psijump}
	(\psi_j)_+ = (\psi_j)_- + f_j \frac{\lambda^L}{z^{M+N}} 
	\quad \text{ on } \gamma_X. \end{equation}
The asymptotic condition \eqref{eq:Yatinfty} and the
definition~\eqref{eq:Zdef} imply that $f_j$ has a pole
and $\psi_j$ has a zero at each point at infinity. The precise
order of the poles and zeros depends on the possible branching
of $X$ at infinity, and 
the behavior of $E(z)$ at infinity. In any case the conditions \eqref{eq:psijump}
imply certain orthogonality conditions for $f_j$ of the
form
\begin{equation} \label{eq:orthogonality} 
	\int_{\gamma_X} f_j \frac{\lambda^L}{z^{M+N}} \omega  = 0, 
\end{equation}
for a large class of meromorphic differentials $\omega$ on $X$
with poles at infinity.

Closer inspection shows that the pole conditions  
at infinity imply that $f_j$ belongs to a vector space 
of meromorphic functions of dimension $\approx qN$ and the orthogonality
conditions \eqref{eq:orthogonality} are for $\omega$
in a vector space of meromorphic differentials of
dimension $\geq qN-c$ with a constant $c$ that is independent of $N,M,L$. Typically  \eqref{eq:orthogonality} will not be enough
to characterize $f_j$ as an orthogonal meromorphic function on $X$,
as the conditions \eqref{eq:orthogonality} will have to
be supplemented by $\approx c$ additional conditions.

Since $c$ is independent of $N,M,L$ we expect that in 
generic cases the additional conditions do
not influence the behavior of the bulk of the zeros of $f_j$
in the large $N,M,L$ limit. We expect that the zeros
will accumulate on an $S$-curve in an external field
on the Riemann surface $X$, as it is known in the genus zero case.
The equilibrium measure in external field on an $S$-curve
has a $g$-function that will be useful in the next transformation
in the  steepest descent analysis of the Riemann-Hilbert problem.

To illustrate, let us take $q=2$ and assume that $X$ has 
branching at infinity.
Let $g_1$ be the restriction of the $g$ function to the first sheet,
and $g_2$ to the second sheet. 
Then we expect that the following transformation $Z \to U$,
\begin{equation} \label{eq:ZtoU} 
	U = Z \diag \left( e^{-2Ng_1}, e^{-2Ng_2}, e^{2Ng_1}, e^{2Ng_2} \right) 
\end{equation}
will be the appropriate next step in the steepest descent analysis.

\subsection{The two-periodic Aztec diamond}
   
The analogous model of the domino tilings of the Aztec diamond
 with two-periodic weightings was successfully analyzed in \cite{DK21}
but without the use of $g$-functions. 
The transformation $Z \to U$ in \cite[formula (5.20)]{DK21} (note that $X$ is
used in \cite{DK21} instead of $Z$) is explicit in terms of $\lambda$ and $z$.
Let us show here that the transformation is actually of the form
\eqref{eq:ZtoU}.

The Riemann surface $X$ in \cite{DK21} is
associated with	the equation
	\[ X : \qquad w^2 = z (z-x_1)(z-x_2) \]
	with real $x_1, x_2$ given by
	$x_1 = - \alpha^{2}$, $x_2 = - \alpha^{-2}$ for some $\alpha > 1$.
	A point on $X$ is denoted by $p = (w,z)$ and we write
	$w = w(p)$, $z= z(p)$, for the projections onto the $w$ 
	and $z$ coordinate.
	Then $X$ is a two-sheeted compact Riemann surface with
	two sheets $\mathbb C \setminus (-\infty, x_1] \cup [x_2,0])$.
	The first sheet is such that $w(p) > 0$ for $p = (w,z)$
	with $z(p) > 0$ on the first sheet. 
	
The meromorphic function $\lambda$ on $X$ has 
a double pole at $p_1$ where $p_1$ is the point on
the first sheet with $z(p_1) = 1$, and a double zero
at $p_2$, the point on the second sheet with $z(p_2) = 1$.
There are no other zeros or poles \cite[Lemma~5.2]{DK21}.
	The bipolar Green's function $G(p,p_1)$ with pole
	at infinity and $p_1$ is explicit
	in terms of $\lambda$, namely for some constant $c$,
	\begin{equation} \label{eq:Gpp1} 
		G(p,p_1) = \frac{1}{4} \log |\lambda(p)| - \frac{1}{2} \log |z(p) - 1| + c \end{equation}
	since indeed the right-hand side satisfies the 
	requirements of Proposition~\ref{prop:bipolarGreen}
	(a), (b) and (c). Note that \eqref{eq:Gpp1} is finite
	at $p=p_2$ since
	$\lambda$ has a double zero at $p_2$.
	Without loss of generality we may assume that the bipolar
	Green's function is taken such that $c=0$ in the above formula. 
	
	Let $0 < r \leq 1$ and let $\mu_r$ be the uniform normalized Lebesgue measure on
	the circle $F_r :|z-1| = r$ on the first sheet. By conformal
	invariance it is the balayage of the point mass $\delta_{p_1}$
	onto $F_r$, which means that $\int G(p,q) d\mu_r(q) = G(p,p_1)$
	for $p \in X$ in the exterior $\Omega_{\text{ext}}$ 
	of the circle (which includes the entire second sheet).
	The equality extends to $F_r$, and thus 
	$2 \int G(p,q) d\mu_r(q) =  \frac{1}{2} \log |\lambda(p)| -  \log r$
	on the support of $\mu_r$, which means that $\mu_r$
	is the equilibrium measure of $F_r$ in
	the external field $- \frac{1}{2} \log |\lambda|$,
	cf.\ Proposition~\ref{prop:eqmeasure}.
	
	We verify that $F_r$ has the $S$-property in the external field.
	Since
	$\frac{1}{4} \log |\lambda(p)| + \frac{1}{2} \log |z(p) - 1|$ is harmonic in the interior domain $\Omega_{\text{int}}$ 
	of $F_r$ (including at $p_1$) 
	and agrees with $G(p,p_1) + \log r$ on the circle. We conclude
	\[ \int G(p,q) d\mu_r(q) = 
	\begin{cases} \frac{1}{4} \log |\lambda(p)| - \frac{1}{2} \log |z(p) - 1|,  & \text{ in } \Omega_{ext}, \\[5pt]
		\frac{1}{4} \log |\lambda(p)| + \frac{1}{2} \log |z(p) - 1|	- \log r,
		& \text{ in } \Omega_{int}.  \end{cases}
	\]
	Then, with appropriate branches of the logarithm
	\[ g(p) = \begin{cases}
		- \frac{1}{4} \log \lambda(p) + \frac{1}{2} \log (z(p) - 1), & \text{ in } \Omega_{ext} \\[5pt]
		- \frac{1}{4} \log \lambda(p) - \frac{1}{2} \log (z(p) - 1) - \log r, & \text{ in } \Omega_{int} \end{cases} \]
	and indeed $g_+ + g_- + \frac{1}{2} \log \lambda$
	is constant on $\supp(\mu)$, which is the $S$-property in
	external field by Definition~\ref{def:S-property}.
	It follows that the transformation in \cite[Section 5.4]{DK21}
	is indeed of the form \eqref{eq:ZtoU}.

The $2 \times k$ periodic Aztec diamond was studied by
Berggren \cite{Berg21} using a Wiener-Hopf factorization
technique developed in \cite{BD19}. 
The formula \eqref{eq:Gpp1}  applies in the $2 \times k$ periodic case as well and it gives rise to an equilibrium measure 
in the same way as described above.

\subsection{Hexagon tilings}

In the Aztec diamond example, one is fortunate to be able to
find a contour with the $S$-property via an explicit
construction. In the hexagon tiling models this does not
seem to work and that is why we redeveloped the methods of
this paper in order to prove the existence of contours with
the $S$-property in an external field. 

The final result of this paper, Theorem~\ref{theo:QDhigher}, is
restricted to a special situation of genus one. It will apply
to lozenge tilings with periodic weightings of periods $p=3$
and $q=2$. In that case the spectral curve 
\eqref{eq:speccurve} has genus one and it can be put
in the form
$X : \quad w^2 = (z-x_1)(z-x_2)(z-x_3)$ with
$x_1 < x_2 < x_3 \leq 0$. If $x_3 = 0$ then we are in the situation
of  \eqref{Xgenus1}. The external field is
\begin{equation} \label{eq:varphihexagon} 
	\varphi(z) = \Re V(z) = 
	- \frac{b}{2} \log |\lambda| + \frac{1+c}{2} \log |z| 
	\end{equation}
with $b = \lim \frac{L}{N}$, $c = \lim \frac{M}{N}$,
$3b > 2c$,
which is invariant under the involution \eqref{involution}.
The condition $3b > 2c$ comes from the fact that
$3L = B+C > C = 2M$, hence $3 \frac{L}{N} > 2 \frac{M}{N}$,
and we take the large $N$ limit such that the strict
inequality remains valid. Then $dV$ has residue 
$r_0 = 1+c$ at $0$ (due to the branching at $0$). Since $\lambda$
will have a third order pole at $\infty$ and $z$ has a
second order pole there, one may
calculate from \eqref{eq:varphihexagon} that $dV$ has
a simple pole at $\infty$ with residue 
$r_{\infty} = 3 \frac{b}{2} - 2 \frac{1+c}{2} = \frac{3 b-2c}{2} - 1$.   Thus the residue conditions of Theorem~\ref{thm:residues}
are satisfied, and thus there is a critical set $F$
with equilibrium measure $\mu^F$ that is a critical measure
in the external field $\varphi$.
By symmetry we may assume that $F$ and $\mu^F$ are
invariant under the involution $\sigma$.

If the support of $\mu^F$ does not intersect the bounded real oval
$C_1$, then by Theorem~\ref{theo:QDhigher}, the support  is a union of maximal trajectories of a quadratic differential $-\upomega$ given by \eqref{QDhigher2}.
Note that by \eqref{QDhigher1}, $\upomega$ has double poles at the poles of $dV$, but also at $p_\infty$ due to the simple pole of $\int C(u,q) d \mu(q)$ at $u = p_\infty$.
Counting multiplicities, $\upomega$ thus has a total of $8$ poles.
Because we are in genus one, we find that $\upomega$ then also has $8$ zeros (counted with multiplicity).

The  equilibrium measure in external field
and its associated $g$-function
will be useful in the steepest descent analysis of the
RH problem as we indicated.
The details of the steepest descent analysis, and their consequences
for the 
random tiling model are under current investigation.

\section{Proofs of Propositions~\ref{prop:CMhigher}~and~\ref{prop:CShigher}}\label{sec:proofs-critical}

\subsection{Proof of Proposition~\ref{prop:CMhigher}}
\begin{proof}
	Let $\Phi$ denote the flow associated with the $C^1$ vector field
	$h$.
	We have 
	\begin{equation} \label{CMhigher5}
		\left. \frac{d}{d\varepsilon} \right|_{\varepsilon=0} V(\Phi(\varepsilon,p)) = h(p) dV(p) \end{equation} and thus by
	\eqref{CMhigher1}
	\begin{align} \nonumber \int V d\mu_{\varepsilon,h} & = \int V(\Phi(\varepsilon,p)) d\mu(p) \\ \label{CMhigher6}
		& =\int V d\mu 
		+ \varepsilon \int h dV  d\mu + o(\varepsilon) 
		\quad \text{ as } \varepsilon \to 0. \end{align}	
	
	Since $p \mapsto G(p,q)$ is harmonic, it is locally the real part of
	a holomorphic function $\mathcal G(p,q)$ and
	\[ C(p,q) = -2 \partial_p G(p,q) = - \partial_p \mathcal G(p,q). \]
	Then for $p \neq q$,
	\[ \left.\frac{d}{d\varepsilon} \right|_{\varepsilon=0} \mathcal G(\Phi(\varepsilon,p), q) 
	= - h(p) C(p,q)  \]
	and by taking the real part
	\[ \left. \frac{d}{d\varepsilon}\right|_{\varepsilon=0} G(\Phi(\varepsilon,p),q) = 
	- \Re (h(p) C(p,q) ). \]
	Since $G$ is symmetric in the two variables, we then also have
	\[ \left. \frac{d}{d\varepsilon}\right|_{\varepsilon=0} G(p,\Phi(\varepsilon,q)) = 
	- \Re (h(q) C(q,p) ), \]
	and by the chain rule 
	\begin{equation} \label{CMhigher7}  	\left. \frac{d}{d\varepsilon}\right|_{\varepsilon=0} G(\Phi(\varepsilon,p),\Phi(\varepsilon,q)) = 
		- \Re (h(p) C(p,q) + h(q) C(q,p) ) \end{equation}
	Then because of \eqref{GreenE1}, \eqref{CMhigher1} and \eqref{CMhigher7}.
	\begin{multline} 
		E[\mu_{\varepsilon,h}] = \iint G(\Phi(\varepsilon,p), \Phi(\varepsilon,q)) d\mu(p) d\mu(q) \\  \label{CMhigher8}
		= E[\mu]  
		- \varepsilon \Re \iint (h(p) C(p,q) + h(q) C(q,p)) d\mu(p) d\mu(q) 
		+ o(\varepsilon) 
	\end{multline}
	as $\varepsilon \to 0$. 
	Combining \eqref{CMhigher6} and 
	\eqref{CMhigher8} we obtain the limit in part (a). 
	\medskip
	
	If $\mu$ is critical then the expression in part (a) is zero for every $h$,
	and this implies that the real part of $D_{V,h}(\mu)$ vanishes. 
	Changing $h$ to $ih$,
	we find that the imaginary part vanishes as well,
	and part (b) follows.
\end{proof}

\subsection{Convergence of perturbed measures in energy norm}

The next lemma concerns the convergence of the measures in energy norm; see \eqref{energynorm}.

\begin{lemma} \label{lem:lemma45}
	Suppose $\mu$ is compactly supported in $X \setminus \{p_{\infty}\}$ with $E[\mu] < +\infty$. Let $h$ be 
	a $C^1$ vector field. Then
	the measures $\mu_{\varepsilon, h}$ tend to $\mu$
	as $\varepsilon \to 0$ in energy norm.
\end{lemma}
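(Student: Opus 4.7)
The plan is to establish the result via the identity
\[ E[\mu_{\varepsilon,h} - \mu] = E[\mu_{\varepsilon,h}] + E[\mu] - 2 E(\mu_{\varepsilon,h}, \mu), \]
where $E(\nu_1,\nu_2) := \iint G(p,q) d\nu_1(p) d\nu_2(q)$, thereby reducing the lemma to convergence of the self- and cross-energies to $E[\mu]$. Since $h$ is $C^1$ and $\supp(\mu)$ is a compact subset of $X \setminus \{p_{\infty}\}$, the flow $\Phi(\varepsilon,\cdot)$ is a diffeomorphism of $X$ close to the identity for small $\varepsilon$, and the supports of $\mu_{\varepsilon,h}$ all lie in a fixed compact $K \subset X \setminus \{p_{\infty}\}$ on which $G$ is bounded below. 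Weak$^*$ convergence $\mu_{\varepsilon,h} \to \mu$ is then immediate: for every continuous $f$ on $X$, $\int f\, d\mu_{\varepsilon,h} = \int f(\Phi(\varepsilon,p))\, d\mu(p) \to \int f\, d\mu$ by bounded convergence, since $\Phi(\varepsilon,\cdot) \to \mathrm{id}$ uniformly on $X$.

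Convergence of the self-energies $E[\mu_{\varepsilon,h}] \to E[\mu]$ is supplied directly by Proposition~\ref{prop:CMhigher}(a) applied with $V \equiv 0$: the resulting double integral of $h(p) C(p,q) + h(q) C(q,p)$ against $\mu \otimes \mu$ is finite because, as observed in Remark~\ref{remarkDVh}, the integrand extends to a bounded continuous function on $K \times K$. For the cross-energy, the upper bound $\limsup_{\varepsilon\to 0} E(\mu_{\varepsilon,h}, \mu) \leq E[\mu]$ is a consequence of positive-definiteness (Proposition~\ref{prop:Gposdef}): from $E[\mu_{\varepsilon,h} - \mu] \geq 0$ we get $E(\mu_{\varepsilon,h}, \mu) \leq \tfrac{1}{2}(E[\mu_{\varepsilon,h}] + E[\mu])$. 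For the matching lower bound I would invoke lower semi-continuity of the bipolar Green's potential $U^{\mu}(p) = \int G(p,q) d\mu(q)$, which follows from Fatou applied to the lower semi-continuous kernel $G$, using that $E[\mu] < \infty$ forces $\mu$ to be atomless so that the diagonal contributes nothing. The Portmanteau-type inequality for lower semi-continuous functions bounded below then gives
\[ \liminf_{\varepsilon \to 0} E(\mu_{\varepsilon,h}, \mu) = \liminf_{\varepsilon \to 0} \int U^{\mu}\, d\mu_{\varepsilon,h} \geq \int U^{\mu}\, d\mu = E[\mu]. \]

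The main delicate point is the justification of the Portmanteau inequality in the last display, since $U^{\mu}$ may take the value $+\infty$ on a polar set. I would handle this by truncation: apply the standard lsc Portmanteau inequality to the bounded lsc function $\min(U^{\mu}, N)$ to obtain $\liminf_{\varepsilon} \int \min(U^\mu, N) \, d\mu_{\varepsilon,h} \geq \int \min(U^\mu,N)\, d\mu$, then let $N \to \infty$ and use monotone convergence on the right-hand side. This is the direct analogue of the classical logarithmic potential argument in the bipolar Green's setting; once $E(\mu_{\varepsilon,h}, \mu) \to E[\mu]$ is established, the desired convergence $E[\mu_{\varepsilon,h} - \mu] \to 0$ follows by plugging everything back into the opening identity.
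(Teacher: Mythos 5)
Your proposal is correct and follows essentially the same route as the paper: weak$^*$ convergence by dominated convergence, convergence of the self-energy via the expansion underlying Proposition~\ref{prop:CMhigher}(a) (i.e.\ \eqref{CMhigher8}), a lower-semicontinuity/Portmanteau bound for the cross-energy $\iint G \, d\mu \, d\mu_{\varepsilon,h}$, and the bilinear identity together with Proposition~\ref{prop:Gposdef} to conclude. The only difference is cosmetic: the paper needs only the $\liminf$ lower bound on the cross-energy inside a single $\limsup$ computation, whereas you additionally establish the matching upper bound, which is harmless but not required.
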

\begin{proof}	
	If $f$ is a continuous function, then
	\[ \int fd\mu_{\varepsilon,h} = \int f(\Phi(\varepsilon,p))d\mu(p)\]
	and $f(\Phi(\varepsilon,p)) \to f(p)$ for every $p$.
	Since $f$ is bounded on $K$, we can apply dominated
	convergence to conclude that $\int f d\mu_{\varepsilon,h} \to \int f d\mu$ as $\varepsilon \to 0$. This proves
	the weak$^*$ convergence of $(\mu_{\varepsilon,h})_\varepsilon$
	to $\mu$.   
	
	Since $q \mapsto \int G(p,q) d\mu(p)$ is  lower semi-continuous, we have by weak$^*$ convergence
	\[ E[\mu] = \iint G(p,q) d\mu(p) d\mu(q)
	\leq \liminf_{\varepsilon \to 0}
	\iint G(p,q) d\mu(p) d\mu_{\varepsilon,h}(q). \]
	From \eqref{CMhigher8} we conclude 
	$E[\mu_{\varepsilon,h}] \to E[\mu]$, and therefore
	\begin{align*} 
		\limsup_{\varepsilon \to 0} E[\mu_{\varepsilon,h} - \mu]
		& = \limsup_{\varepsilon \to 0} 
		\left( E[\mu_{\varepsilon,h}] + E[\mu]
		- 2 \iint G(p,q) d\mu(p) d\mu_{\varepsilon,h}(q) \right)
		\\ & \leq E[\mu] +  E[\mu] - 2 E[\mu] = 0,
	\end{align*}
	which is the convergence in energy norm.
\end{proof}

For later use, we need a uniformity in the limit \eqref{CMhigher4}
for measures on a fixed compact. 
\begin{lemma} \label{lem:lemma46}
	Let $K \subset X \setminus \{p_{\infty}\}$ be
	a compact set such that $\varphi = \Re V$ is bounded on $K$.
	Let $h$ be a $C^1$ vector field.
	Then for every  probability measure $\mu$ 
	on $K$ with $E[\mu] < + \infty$, we have	
	\begin{equation} \label{CMhigher9}
		E_\varphi[\mu_{\varepsilon,h}] - E_\varphi[\mu]
		- \varepsilon \Re D_{V,h}(\mu)  = o(\varepsilon)
		\quad \text{ as } \varepsilon \to 0, \end{equation}
	where the $o(\varepsilon)$
		only depends on $h$ and $K$, but not on $\mu$.
\end{lemma}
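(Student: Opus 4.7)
The plan is to establish the uniform estimate by representing both terms in the energy functional as integrals along the flow $\Phi$ associated with the vector field $h$, and then exploiting uniform continuity of the relevant integrands on compact sets.

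For the external field term, set $\psi(p) := \Re(h(p)\,dV(p))$, which is the Lie derivative of $\varphi = \Re V$ along $h$ (as in Remark~\ref{remT10}). Under the standing hypothesis that $\varphi$ is bounded on $K$ (and $dV$ is regular there), $\psi$ is continuous, hence uniformly continuous on $K$. By the fundamental theorem of calculus along the flow,
\[ \varphi(\Phi(\varepsilon,p)) - \varphi(p) - \varepsilon \psi(p) = \int_0^{\varepsilon} \bigl[\psi(\Phi(t,p)) - \psi(p)\bigr]\, dt. \]
Since $\sup_{p\in K} d(\Phi(t,p),p) \to 0$ as $t \to 0$ (standard continuity of the flow of a $C^1$ vector field on a compact set), uniform continuity of $\psi$ yields a bound $|\varepsilon|\,\omega_1(|\varepsilon|)$ with $\omega_1(s) \to 0$ depending only on $h$ and $K$. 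Integrating against the probability measure $\mu$ preserves this bound.

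For the Green's energy term, define $H(p,q) := h(p) C(p,q) + h(q) C(q,p)$. As observed in Remark~\ref{remarkDVh}, the singularities of the two summands at $p=q$ cancel, so $H$ is a bounded continuous function on $K\times K$, hence uniformly continuous there. The computation \eqref{CMhigher7} in the proof of Proposition~\ref{prop:CMhigher}, applied at the base point $(\Phi(t,p),\Phi(t,q))$ via the flow identity $\Phi(s+t,\cdot) = \Phi(s,\Phi(t,\cdot))$, gives
\[ \frac{d}{dt} G(\Phi(t,p),\Phi(t,q)) = - \Re H(\Phi(t,p),\Phi(t,q)) \]
for all $t$ in a neighborhood of $0$, and therefore
\[ G(\Phi(\varepsilon,p),\Phi(\varepsilon,q)) - G(p,q) + \varepsilon\Re H(p,q) = -\int_0^{\varepsilon} \Re\bigl[H(\Phi(t,p),\Phi(t,q)) - H(p,q)\bigr]\,dt. \]
Uniform continuity of $H$ on $K\times K$ and of the flow yield a bound $|\varepsilon|\,\omega_2(|\varepsilon|)$ uniform in $(p,q)\in K\times K$, with $\omega_2(s)\to 0$ depending only on $h$ and $K$. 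Integrating against the product probability measure $\mu\otimes\mu$ again preserves this bound.

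Adding the two contributions and recalling the definition of $D_{V,h}(\mu)$ in \eqref{CMhigher3} yields \eqref{CMhigher9} with the claimed uniformity. The main subtlety is ensuring that the integrand $H(p,q)$ extends continuously across the diagonal (so that the $t=0$ linearization makes sense as a bounded function rather than a singular one); this is precisely the content of Remark~\ref{remarkDVh}, after which the proof reduces to elementary uniform-continuity estimates. The remaining technicality is simply uniform continuity of the flow of a $C^1$ vector field on compacts, which is standard ODE theory.
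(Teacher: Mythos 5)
Your proposal is correct and follows essentially the same route as the paper: both arguments integrate the derivative-along-the-flow identities (the paper's \eqref{CMhigher5} and \eqref{CMhigher7}) and use uniform continuity of the continuous extension $H(p,q)=h(p)C(p,q)+h(q)C(q,p)$ and of $h\,dV$ on a compact flow-out of $K$ avoiding $p_\infty$ and the poles of $dV$, before integrating against $\mu\otimes\mu$ and $\mu$. The only cosmetic difference is that the paper makes the flow-out compact set $\bigcup_{|t|\le\varepsilon}\Phi(t,K)$ explicit, whereas you fold that step into the phrase ``uniform continuity of $H$ and of the flow''; the substance is identical.
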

\begin{proof} 
	Let $\Phi$ be the flow associated with the vector field $h$.
	Since $\varphi$ is bounded on $K$, the set $K$
	does not contain any of the poles of $dV$.
	Then there is $\varepsilon > 0$ small enough such
	that 
	\begin{equation} \label{CMhigher10} \bigcup_{t \in [-\varepsilon, \varepsilon]} \bigcup_{p \in K}
		\Phi(t, p)  \end{equation}
	is a compact subset of $X \setminus \{p_{\infty}\}$ that
	does not contain  any of the poles of $dV$.
	Thus $\varphi$ is bounded on the set \eqref{CMhigher10}.
	
	We already observed in Remark \ref{remarkDVh} that 
	\begin{equation} \label{CMhigher11}
		(p,q) \mapsto H(p,q) = h(p) C(p,q) + h(q) C(q,p)
	\end{equation} 
	is a well
	defined function on $(X \setminus \{p_{\infty}\})^2$.
	It is clearly $C^1$ function for $p \neq q$, since
	we assumed that $h$ is $C^1$.
	
	Suppose $p,q$ are in a local chart, where $p$ and $q$
	correspond to $z$ and $w$ respectively in the local coordinate. 
	Then
	\[ C(p,q) = \frac{1}{z-w} dz + \widetilde{C}(z,w) dz \]
	where $\widetilde{C}(z,w)$ is $C^{\infty}$ smooth.
	Write $h = h_1 \frac{\partial}{\partial z} +
	h_2 \frac{\partial}{\partial \bar{z}}$
	with $C^1$ functions $h_1$ and $h_2$. Then by \eqref{CMhigher11}
	\begin{align*} H(p,q)  & = h_1(z) \left( \frac{1}{z-w} + \widetilde{C}(z,w) \right) + h_1(w) \left(\frac{1}{w-z} + 
		\widetilde{C}(w,z) \right) \\ 
		& = \frac{h_1(z)- h_1(w)}{z-w}
		+ h_1(z) \widetilde{C}(z,w) + h_1(w) \widetilde{C}(w,z)
	\end{align*}
	which shows that $H$ is a continuous function in a local chart.
	Therefore $H$ is uniformly continuous on the compact
	set \eqref{CMhigher10}.
	It follows from the uniform continuity that
	\begin{equation}\label{CMhigher12}
		\sup_{|t| \leq \varepsilon} \sup_{p,q \in K} |H(\Phi(t,p), \Phi(t,q)) - H(p,q)| = o(1)
	\end{equation}
	as $\varepsilon \to 0$, where 
	the $o$-term only depends on $h$ and the compact set $K$.
	
	Next, by integrating \eqref{CMhigher7} and recalling
	\eqref{CMhigher11}, we find
	\begin{multline*}
		G(\Phi(\varepsilon,p), \Phi(\varepsilon,q))
		- G(p,q) + \varepsilon \Re H(p,q)  \\
		= -  \Re 
		\int_0^\varepsilon \left[ H(\Phi(s,p), \Phi(s,p)) 
		- H(p,q) \right] ds.
	\end{multline*} 
	Combining this with \eqref{CMhigher12}, we
	arrive at the estimate  
	\[ \left|
	G(\Phi(\varepsilon,p), \Phi(\varepsilon,q))
	- G(p,q) + \varepsilon \Re H(p,q) \right|
	= o(\varepsilon), \quad p, q \in K, \]
	as $\varepsilon \to 0$, where the $o$-term only depends on $h$ and $K$.
	We integrate with respect to $d\mu(p) d\mu(q)$ where
	$\mu$ is a probability measure on $K$ with
		$E[\mu] < \infty$, to find that as $\varepsilon \to 0$,
	\begin{multline} \label{CMhigher13} \left| \iint G(p,q) d\mu_{\varepsilon,h}(p) d\mu_{\varepsilon,h}(q)
		- \iint G(p,q) d\mu(p) d\mu(q) \right. \\ \left.
		+ \varepsilon \Re \iint H(p,q) d\mu(p) d\mu(q)
		\right| = o(\varepsilon), \end{multline}
	with the $o$-term independent of $\mu$.
	
	Using \eqref{CMhigher5} and the fact that $\Re V$
	is bounded on \eqref{CMhigher10}  we find in an analogous way that for any probability
	measure $\mu$ on $K$, as $\varepsilon \to 0$,
	\begin{equation} \label{CMhigher14}
		\left| \int \Re V d\mu_{\varepsilon,h} -
		\int \Re V d\mu - \varepsilon \int hdV d\mu \right|
		= o(\varepsilon). \end{equation}
	Combining \eqref{CMhigher13} and \eqref{CMhigher14},
	and recalling the definitions, 
	we obtain \eqref{CMhigher9}.
\end{proof}

\subsection{Proof of Proposition~\ref{prop:CShigher}}

\begin{proof} 	
	Let $F \subset X \setminus \{p_{\infty}\}$ be a union of continua such that $\Re V$ is
	bounded from below on $F$ and let $\mu^F$ be the equilibrium measure of $F$ in the external field $\varphi$.
	
	Part (b) follows directly from the definition once part (a) has been established, so we only need to prove part (a).
	
	To establish \eqref{CShigher3} it suffices to show that
	\begin{equation} \label{CShigher4} 
		E_{\varphi}(F_{\varepsilon,h}) - E_{\varphi}(F)
		= E_{\varphi}[\mu_{\varepsilon,h}^F] - E_{\varphi}[\mu]
		+ o(\varepsilon) \end{equation}
	as $\varepsilon \to 0$, since we already know from 
	Proposition~\ref{prop:CMhigher} that the limit in the
	right-hand side of \eqref{CShigher3} exists, and having
	\eqref{CShigher4} we conclude that the limit in the
	left-hand side also exists and the equality \eqref{CShigher3} holds.
	
	For a $C^1$ vector field $h$, and $\varepsilon \in \mathbb R$, 
	we are going to use the four probability measures
	$\mu^F$, $\mu^F_{\varepsilon,h}$, $\mu^{F_{\varepsilon,h}}$
	and $(\mu^{F_{\varepsilon,h}})_{-\varepsilon,h}$. 
	To simplify notation, we write  
	\[ \mu = \mu^F, \quad \mu_{\varepsilon} = \mu^{F}_{\varepsilon,h},
	\quad \mu^{\varepsilon} = \mu^{F_{\varepsilon,h}}, \quad \mu^{\varepsilon}_{-\varepsilon} = (\mu^{F_{\varepsilon,h}})_{-\varepsilon,h}. \]
	
	Thus $\mu^{\varepsilon}$ is the equilibrium measure of $F_{\varepsilon, h}$ (see \eqref{CShigher1}) in the external field $\varphi$,
	and therefore  
	\begin{equation} \label{CShigher5}  E_{\varphi}(F_{\varepsilon,h}) = E_{\varphi}[\mu^{\varepsilon}] \leq E_{\varphi}[\mu_\varepsilon], \end{equation}
	since $\mu_{\varepsilon}$ is supported on $F_{\varepsilon,h}$.
	Since $\mu^{\varepsilon}_{-\varepsilon}$ is the image
	of $\mu^{\varepsilon}$ under the reverse flow associated
	with $h$, it is supported on $F$, and hence
	\begin{equation} \label{CShigher6} 
		E_{\varphi}(F) = E_{\varphi}[\mu] \leq E_{\varphi}[\mu^{\varepsilon}_{-\varepsilon}], \end{equation}
	
	Applying Lemma~\ref{lem:lemma46} to $\mu$ and $\mu^{\varepsilon}$
	(with $-\varepsilon$ instead of $\varepsilon$ for the latter
	measure), we obtain
	\begin{equation} \label{CShigher7} 
		\begin{aligned} 
			E_{\varphi}[\mu_\varepsilon] & = E_{\varphi}[\mu]
			+ \varepsilon \Re D_{V,h}(\mu) + o(\varepsilon), \\
			E_{\varphi}[\mu^{\varepsilon}] & =
			E_{\varphi}[\mu^{\varepsilon}_{-\varepsilon}]
			+\varepsilon \Re D_{V,h}\left(\mu^{\varepsilon}\right) + o(\varepsilon).
		\end{aligned}
	\end{equation}
	Subtracting the two identities in \eqref{CShigher7} 
	and using the inequalities \eqref{CShigher5} and \eqref{CShigher6}, 
	we get
	\begin{equation} \label{CShigher8} 0  \leq E_{\varphi}[\mu_\varepsilon] -	E_{\varphi}[\mu^{\varepsilon}]   \leq \varepsilon \Re D_{V,h}(\mu)
		-\varepsilon \Re D_{V,h}\left(\mu^{\varepsilon}\right)
		+ o(\varepsilon) 
	\end{equation}
	The expression \eqref{CMhigher3} shows that
	$D_{V,h}(\mu^{\varepsilon})$ remains bounded
	as $\varepsilon \to 0$ and therefore by \eqref{CShigher8} 
	\begin{equation} \label{CShigher9} 
		\lim_{\varepsilon \to 0} \left( E_{\varphi}[\mu_{\varepsilon}]
		- E_{\varphi}[\mu^{\varepsilon}]\right) = 0. \end{equation}
	
	Next, we use the identity
	\[ E[\mu_\varepsilon - \mu^{\varepsilon}]
	= 2E_{\varphi}[\mu_{\varepsilon}]	
	+ 2 E_{\varphi}[\mu^{\varepsilon}]
	- 4 E_{\varphi}\left[ \frac{\mu_{\varepsilon}+ \mu^{\varepsilon}}
	{2} \right] 
	\]
	and the inequality
	\[ E_{\varphi}(F_{\varepsilon,h}) \leq  E_{\varphi}\left[ \frac{\mu_{\varepsilon}+ \mu^{\varepsilon}}
	{2} \right], \]
	which holds since $\frac{\mu_{\varepsilon}+ \mu^{\varepsilon}}
	{2}$ is a probability measure on $F_{\varepsilon,h}$, to
	find that
	\[  0 \leq E[\mu_{\varepsilon} - \mu^{\varepsilon}]
	\leq 2 E_{\varphi}[ \mu_{\varepsilon}]
	- 2 E_{\varphi}[\mu^{\varepsilon}]. \]
	Hence in view of \eqref{CShigher9}
	\begin{equation} \label{CShigher10} 
		\lim_{\varepsilon \to 0} E[\mu_{\varepsilon} - \mu^{\varepsilon}] = 0. \end{equation}
	
	Since $\mu_{\varepsilon}$ tends to $\mu$ in energy norm,
	see Lemma~\ref{lem:lemma45}, 
	we find from \eqref{CShigher10} that $\mu^{\varepsilon}$ also
	tends to $\mu$ in energy norm, and in particular 
	in weak$^*$ sense. Then
	\begin{equation} \label{CShigher11} \lim_{\varepsilon \to 0} D_{V,h}(\mu^{\varepsilon}) = D_{V,h}(\mu) \end{equation}
	by weak$^*$ convergence, since the functions that
	appear in the integrals \eqref{CMhigher3}  are 
	bounded and continuous.
	Using \eqref{CShigher11} in \eqref{CShigher8}, we
	find
	\[ E_{\varphi}[\mu_{\varepsilon}] - E_{\varphi}[\mu^{\varepsilon}] = o(\varepsilon) \]
	as $\varepsilon \to 0$. 
	Combining this with the equalities in \eqref{CShigher5} and \eqref{CShigher6},
	we obtain \eqref{CShigher4}, and
	the proof of Proposition~\ref{prop:CShigher} is complete.
\end{proof}

\section{Proof of Theorem~\ref{thm:residues}}
	\label{sec:maxmin}
	
\subsection{A preliminary lemma}

\begin{lemma} \label{lem:prelim}
	Suppose $F \in \mathcal F_{\delta}$, and let $\widetilde{F}$
	be a connected component of $F$.
	\begin{enumerate}
		\item[\rm (a)] If $\widetilde{F} \subset U_{\delta_0}(p_0)$
		then $\widetilde{F}$ separates $p_0$ from 
		$\partial U_{\delta_0}(p_0)$.
		\item[\rm (b)] If $\widetilde{F} \subset U_{\delta_{\infty}}(p_\infty)$
		then $\widetilde{F}$ separates $p_\infty$ from 
		$\partial U_{\delta_\infty}(p_\infty)$.
		\item[\rm (c)] There is $\eta > 0$, depending only on $\delta$,
		such that $\diam (\widetilde{F}) \geq \eta$.
	\end{enumerate} 
\end{lemma}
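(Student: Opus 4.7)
The plan is to prove all three parts by a single reduction. If the claimed conclusion fails, I will enclose the component $\widetilde F$ in an open simply connected set $V \subset X \setminus \{p_0, p_\infty\}$, and then use a Hausdorff-approximating sequence $F^{(n)} \to F$ with $F^{(n)} \in \mathcal T_\delta$ to extract a connected component $\gamma_j^{(n)}$ of some $F^{(n)}$ that lies entirely inside $V$. Being a simple closed curve in a simply connected open subset of $X \setminus \{p_0, p_\infty\}$, such a $\gamma_j^{(n)}$ is null-homotopic in $X \setminus \{p_0, p_\infty\}$, contradicting $\gamma_j^{(n)} \in \mathcal S_\delta$.

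Here is the extraction step that I will invoke in all three parts. Given any open $U \subset X$ containing $\widetilde F$, since $\widetilde F$ is a connected component of the compact Hausdorff space $F$ it equals the intersection of the clopen subsets of $F$ containing it, so there exists a clopen $W \subset F$ with $\widetilde F \subset W \subset U \cap F$. Writing $N_\eta(\cdot)$ for the open $\eta$-neighborhood in $(X,d)$ and setting $\eta = \tfrac{1}{3}\min\bigl(\dist(W, F \setminus W),\, \dist(W, X \setminus U)\bigr) > 0$, for any $n$ with $d_H(F^{(n)}, F) < \eta$ we have $F^{(n)} \subset N_\eta(W) \cup N_\eta(F \setminus W)$, and these two open sets are disjoint by the choice of $\eta$. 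Hence $F^{(n)}$ splits into disjoint clopen pieces $F^{(n)} \cap N_\eta(W)$ and $F^{(n)} \cap N_\eta(F \setminus W)$, the first being nonempty (because $W$ is) and contained in $U$. Since $F^{(n)}$ is a finite disjoint union of simple closed curves $\gamma_j^{(n)} \in \mathcal S_\delta$, at least one $\gamma_j^{(n)}$ lies entirely in $U$.

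For part (a), assume for contradiction that $\widetilde F \subset U_{\delta_0}(p_0)$ does not separate $p_0$ from $\partial U_{\delta_0}(p_0)$. Then there is a simple arc $\alpha$ in $\overline{U_{\delta_0}(p_0)} \setminus \widetilde F$ from $p_0$ to a point of $\partial U_{\delta_0}(p_0)$. Since $U_{\delta_0}(p_0)$ is a topological disk and $\alpha$ is a proper arc joining its centre to its boundary, the slit domain $V := U_{\delta_0}(p_0) \setminus \alpha$ is simply connected (a standard planar-topology fact), and moreover $\widetilde F \subset V \subset U_{\delta_0}(p_0) \setminus \{p_0\} \subset X \setminus \{p_0, p_\infty\}$. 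The extraction step with $U = V$ then yields the contradictory $\gamma_j^{(n)}$. Part (b) is identical after interchanging $p_0 \leftrightarrow p_\infty$ and $\delta_0 \leftrightarrow \delta_\infty$, using that $U_{\delta_\infty}(p_\infty)$ is disjoint from $p_0$ by the standing assumption on $\delta_0, \delta_\infty$.

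For part (c), fix $\rho > 0$ depending only on $X$ such that every open $d$-ball of radius $\rho$ lies in a single coordinate chart (hence is a topological disk). Set $\eta = \tfrac{1}{3}\min(\delta, \rho) > 0$, which depends only on $\delta$. If some component satisfied $\diam(\widetilde F) < \eta$, then $\widetilde F \subset B(q, \eta)$ for some $q \in X$; since $\widetilde F$ is at $d$-distance at least $\delta$ from both $p_0$ and $p_\infty$, so is $q$ up to $\eta$, and therefore $V := B(q, 2\eta) \subset X \setminus \{p_0, p_\infty\}$ is an open topological disk. A final application of the extraction step with $U = V$ produces a null-homotopic $\gamma_j^{(n)} \in \mathcal S_\delta$, a contradiction. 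The one genuine subtlety is the slit-disk simple-connectedness claim in part (a), which I expect to dispatch rigorously by identifying $(U_{\delta_0}(p_0), \alpha)$ with a unit disk slit along a radial segment through a Schoenflies-type homeomorphism.
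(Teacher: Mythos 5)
Your argument is correct and rests on the same core fact as the paper's proof --- a contour of $\mathcal S_{\delta}$ trapped in a simply connected open subset of $X\setminus\{p_0,p_\infty\}$ would be null-homotopic there, contradicting Definition~\ref{def:Tdelta}(a) --- but it is organized quite differently. The paper first establishes (a), (b) and the uniform diameter bound for individual contours $\gamma\in\mathcal S_{\delta}$ (for (c) via a case analysis on whether $\gamma$ is contractible in $X$ and which of $p_0$, $p_\infty$ the disk it bounds contains), and then simply asserts that the properties are preserved under taking the Hausdorff closure. You instead argue by contradiction directly on the limit set $F$, and your quasi-component/clopen extraction step supplies exactly the point-set argument that the paper's closing sentence glosses over: any open neighborhood of a component $\widetilde F$ of $F$ must, for Hausdorff-close approximants $F^{(n)}\in\mathcal T_{\delta}$, contain an entire $\gamma_j^{(n)}$. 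This makes the passage from $\mathcal T_{\delta}$ to $\mathcal F_{\delta}$ rigorous, and your version of (c) (small component $\Rightarrow$ contained in a small contractible ball avoiding $p_0,p_\infty$) is cleaner than the paper's case analysis.

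A few small repairs are needed. First, in (c) the parenthetical ``lies in a single coordinate chart (hence is a topological disk)'' is a non sequitur: a subset of a chart need not be simply connected. Take $\rho$ below the injectivity radius of the metric, or simply invoke compactness of $X$ to get $\rho>0$ with every $\rho$-ball contractible, as the paper does. Second, $U_{\delta}(p_0)$ is defined through the coordinate $z_0$, so the $d$-distance from $\widetilde F$ to $p_0$ is bounded below by $c\delta$ with $c$ as in \eqref{eq:distanceequiv}, not by $\delta$ itself; this only rescales your $\eta$. Third, Definition~\ref{def:Tdelta} does not require the $\gamma_j$ to be pairwise disjoint, but your extraction survives: each $\gamma_j^{(n)}$ is connected and therefore lies wholly in one of the two disjoint open sets $N_\eta(W)$, $N_\eta(F\setminus W)$. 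Finally, for the slit disk you can avoid Schoenflies altogether: each connected component of $U_{\delta_0}(p_0)\setminus\alpha$, viewed in the coordinate chart as an open subset of the sphere whose complement is connected, is simply connected, and the connected set $\widetilde F$ lies in a single such component, which is all the extraction step requires.
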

\begin{proof}
	Any contour $\gamma \in \mathcal S_{\delta}$
	has the properties (a) and (b), since otherwise
	$\gamma$ would be homotopic to a point in $X \setminus \{p_0,p_{\infty}\}$, which would contract the definition
	of $\mathcal S_{\delta}$ in Definition~\ref{def:Tdelta} (a).
	
	We next prove that 
	\begin{equation} \label{eq:Sdeltadiam} 
		\inf_{\gamma \in \mathcal S_{\delta}} \diam(\gamma) > 0. \end{equation}
	By compactness of $X$ there is an $\eta > 0$ such that
	every $\eta$ neighborhood of a point is contractible.
	Thus any $\gamma \in S_{\delta}$ that is not contractible in $X$
	must have $\diam(\gamma) \geq \eta$.
	Now suppose $\gamma \in \mathcal S_{\delta}$ is 
	contractible in $X$. Then $X \setminus \gamma$ has a simply connected component
	that contains at least one of $p_0, p_{\infty}$ since otherwise
	$\gamma$ would be  contractible in $X \setminus \{p_0, p_{\infty}\}$
	which again would contradict Definition~\ref{def:Tdelta} (a).  
	If the simply connected component contains both $p_0$ and $p_{\infty}$
	then $\gamma$ will have a minimal diameter.
	If the simply connected component contains only one of $p_0$ and $p_{\infty}$,
	then $\gamma$ separates $p_0$ from $p_{\infty}$
	and it will have a minimum diameter since
	$\gamma$ is outside of $U_{\delta}(p_0)$ and $U_{\delta}(p_{\infty})$.
	Thus \eqref{eq:Sdeltadiam} holds.
	
	Thus properties (a), (b), and (c) hold if $F = \gamma \in \mathcal S_{\delta}$.
	Then by \eqref{eq:defTdelta} it also holds if $F = \gamma \in \mathcal T_{\delta}$.
	 The properties are preserved under taking closure in the Hausdorff metric and the lemma follows.	
\end{proof}

\subsection{Estimates of $E_{\varphi}(F)$}
We start with a proposition, whose proof should be compared 
to that of \cite[Proposition 5.1]{KS15}.
\begin{proposition} \label{prop:residues}
	Suppose $\varphi$ has the behavior
	\eqref{phiatp0} near $p_0$ and the behavior
	\eqref{phiatpinfty} near $p_{\infty}$. 
	Let $\mathcal F$, and $\delta_0, \delta_{\infty}$ be as in Definition~\ref{def:Tdelta}.
	Then there is a constant $C$ such that
	the following hold.
	\begin{enumerate} 
		\item[\rm (a)] 
		If $F \in \mathcal F$ intersects $\partial U_{\delta}(p_0)$ 
		for some $\delta < \delta_0/2$
		then  \begin{equation} \label{EFestimate1}
			E_{\varphi}(F) \leq (r_0-1) \log \delta + C. 
		\end{equation}
		\item[\rm (b)] 
		If $F \in \mathcal F$ intersects $\partial U_{\delta}(p_{\infty})$
		for some $\delta < \delta_{\infty}/2$, then 
		\begin{equation} \label{EFestimate2} 
			E_{\varphi}(F) \leq (1+r_{\infty})   \log \delta + C. \end{equation}
	\end{enumerate}
\end{proposition}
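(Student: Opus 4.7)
The strategy for both parts is to construct, for any $F$ satisfying the hypothesis, an explicit probability measure $\mu$ supported on $F$ whose bipolar Green's energy in the external field $\varphi$ realizes the claimed upper bound. Then $E_\varphi(F) \leq E_\varphi[\mu]$ delivers the result.

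For part (a), let $F \in \mathcal F$ intersect $\partial U_\delta(p_0)$ and let $\widetilde F$ be the connected component of $F$ containing a point of $\partial U_\delta(p_0)$. By Lemma~\ref{lem:prelim}(c), $\diam(\widetilde F) \geq \eta$ for some $\eta>0$. For $\delta < \eta/4$, this component cannot lie inside $\overline{U_{2\delta}(p_0)}$, so $\widetilde F$ must contain a subcontinuum $K$ in the annulus $\{\delta \leq |z_0| \leq 2\delta\}$ joining $\partial U_\delta(p_0)$ to $\partial U_{2\delta}(p_0)$. Here $z_0$ is the chosen local coordinate at $p_0$, and the diameter of $K$ in this coordinate is at least $\delta$. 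I would take $\mu$ to be the logarithmic equilibrium measure of $K$ computed in the local chart; the classical inequality $\mathrm{cap}(K) \geq \diam(K)/4$ for planar continua yields
\[
-\iint \log|z_0(p)-z_0(q)|\, d\mu(p)\,d\mu(q) \leq \log 4 - \log\delta.
\]
The local expansion \eqref{Gnearp0} then gives $E[\mu] \leq -\log\delta + C_1$, while \eqref{phiatp0} together with $|z_0(p)| \leq 2\delta$ on $\supp(\mu)$ and $r_0 > 0$ gives $\int \varphi \, d\mu \leq r_0 \log\delta + C_2$. Adding these yields $E_\varphi[\mu] \leq (r_0-1)\log\delta + C$, as claimed. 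For $\delta$ in the complementary range $[\eta/4, \delta_0/2)$, the hypothesis forces $\dist(F,p_0)$ to be bounded below, so $\varphi$ is bounded on a subcontinuum of $F$ of definite diameter; the corresponding equilibrium measure has uniformly bounded $E_\varphi$, which can be absorbed by enlarging $C$.

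Part (b) runs along the same lines with $p_0$ replaced by $p_\infty$, except that the local expansion \eqref{Gnearinfty} must first be rewritten using $z_\infty(p)^{-1}-z_\infty(q)^{-1} = (z_\infty(q)-z_\infty(p))/(z_\infty(p) z_\infty(q))$ as
\[
G(p,q) = -\log|z_\infty(p)-z_\infty(q)| + \log|z_\infty(p)| + \log|z_\infty(q)| + O(1)
\]
near $p_\infty$. For $\mu$ supported on an annular continuum $K \subset \{\delta \leq |z_\infty| \leq 2\delta\}$ (extracted exactly as in part (a)), the two extra $\log|z_\infty|$ factors each contribute $\log(2\delta) + O(1)$, giving $E[\mu] \leq \log\delta + O(1)$. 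The external field estimate $\int \varphi \, d\mu \leq r_\infty \log\delta + O(1)$ holds for either sign of $r_\infty$ because $|z_\infty| \in [\delta, 2\delta]$ forces $r_\infty \log|z_\infty| \leq r_\infty\log\delta + |r_\infty|\log 2$ in both cases. Summing produces $E_\varphi[\mu] \leq (1+r_\infty)\log\delta + C$.

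The principal obstacle is the topological extraction of the annular continuum $K$: once one observes that the diameter constraint from Lemma~\ref{lem:prelim}(c) rules out $\widetilde F$ being confined to a small neighborhood of $p_0$ (or $p_\infty$), the remaining analysis is a direct application of the local expansions of $G$ and $\varphi$ and the classical capacity inequality for continua in the plane. Handling the intermediate range of $\delta$ is also a small but necessary bookkeeping point that is easy to overlook.
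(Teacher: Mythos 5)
Your overall strategy --- build a test measure by pulling back the planar logarithmic equilibrium measure of a subcontinuum whose coordinate image has diameter at least $\delta$, invoke $\mathrm{cap}\geq \diam/4$ for planar continua, and compare $G$ and $\varphi$ with their local models near $p_0$ and $p_\infty$ --- is exactly the paper's. The computation near $p_\infty$, including the rewriting of $\log|z_\infty(p)^{-1}-z_\infty(q)^{-1}|$ that produces the extra $2\log(2\delta)$ and hence the coefficient $1+r_\infty$, also matches.

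The gap is in the topological extraction. You claim that for $\delta<\eta/4$ the component $\widetilde F$ meeting $\partial U_\delta(p_0)$ cannot be contained in $\overline{U_{2\delta}(p_0)}$, citing Lemma~\ref{lem:prelim}(c). But the $\eta$ there depends on the $\delta'$ for which $F\in\mathcal F_{\delta'}$, and $\mathcal F=\bigcup_{\delta'}\mathcal F_{\delta'}$: a small loop encircling $p_0$ at coordinate radius between $\delta$ and $2\delta$ is an admissible component of some $F\in\mathcal F_{\delta'}$ with $\delta'\ll\delta$, has Riemannian diameter of order $\delta$, and is contained in $\overline{U_{2\delta}(p_0)}$. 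For such a component no subcontinuum joining $\partial U_\delta(p_0)$ to $\partial U_{2\delta}(p_0)$ exists, so your annular continuum $K$ is not available; and your fallback for the ``intermediate range'' of $\delta$ does not apply either, since this situation occurs for arbitrarily small $\delta$ and does not force $\dist(F,p_0)$ to be bounded below. The paper covers this case via Lemma~\ref{lem:prelim}(a)--(b): a component contained in $U_{\delta_0}(p_0)$ must separate $p_0$ from $\partial U_{\delta_0}(p_0)$, so its coordinate image surrounds the origin and, touching $|z_0|=\delta$, still has diameter at least $\delta$; the capacity and external-field estimates then go through using only $|z_0|\leq 2\delta$ on the support. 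Note also that in this missed case the support need not lie in the annulus $\{\delta\leq|z_\infty|\leq 2\delta\}$ --- only the upper bound $|z_\infty|\leq 2\delta$ is guaranteed --- so your treatment of a possibly negative $r_\infty$ in part (b), which relies on the lower bound $|z_\infty|\geq\delta$, must be revisited there as well.
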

\begin{proof}
	(a) Take  $0 < \delta < \delta_0/2$ and assume $F \in \mathcal F$ intersects $\partial U_{\delta}(p_0)$. 
	Let $\widetilde{F}$ be a connected component of $F$ that
	intersects $\partial U_{\delta}(p_0)$.
	Then either $\widetilde{F}$ is fully contained in $U_{2\delta}(p_0)$
	and we put $F_0 = F$,
	or it contains a sub-continuum $F_0 \subset \widetilde{F}$
	that intersects both $\partial U_{\delta}(p_0)$ and $\partial U_{2\delta}(p_0)$ and that is contained in $\overline{U_{2\delta}(p_0)}$.
		
	In both cases $z_0(F_0)$ is a continuum in $\mathbb C$ and we claim that
	\begin{equation} \label{diamF0} 
		\diam(z_0(F_0)) \geq \delta, \end{equation}
	where $z_0$ is the local coordinate at $p_0$.
	The claim \eqref{diamF0} follows immediately if
	$F_0$ intersects both $\partial U_{\delta}(p_0)$
	and $\partial U_{2\delta}(p_0)$, since in that case the
	$z_0$-images of the two intersection points are in
	$z_0(F_0)$ with distance $\geq \delta$.
	The claim \eqref{diamF0} also follows 
	if $F_0 = \widetilde{F} \subset U_{2\delta}(p_0)$,
	since then $z_0(F_0)$ is a continuum in $\mathbb C$ 
	that intersects $\partial D(0,\delta)$ and  
	separates $p_0$ from $p_\infty$ by Lemma~\ref{lem:prelim}. 
	
	Let $\omega$ be the equilibrium measure from usual
	logarithmic potential theory for the continuum $z_0(F_0)
	\subset \overline{D(0,2\delta)}$.
	From \eqref{diamF0} it follows that the capacity of
	$z_0(F_0)$ is at least the capacity of a straight line
	segment of length $\delta$, which is $\delta/4$, see e.g.\ \cite[Theorem~5.3.2]{Ran95}.
	Therefore 
	\begin{equation} \label{logestimate1} 
		\iint \log \frac{1}{|z-w|} d\omega(z) d\omega(w) 
		\leq -\log \frac{\delta}{4}. \end{equation}
	
	Because of \eqref{Gnearp0} we have for some constant $C_1$
	\begin{equation} \label{Gpqestimate} G(p,q) \leq \log \frac{1}{|z_0(p)-z_0(q)|} + C_1,
		\qquad p,q \in U_{\delta_0}(p_0) \end{equation}
	Let $\mu$ be the pullback of $\omega$ by the 
	local coordinate $z_0$.
	Then $\mu$ is a probability measure on $F \cap  \overline{U_{2\delta}(p_0)} \subset U_{\delta_0}(p_0)$
	so that   by \eqref{Gpqestimate}, the properties
	of the pullback measure,  and \eqref{logestimate1}
	\begin{align} \nonumber
		\iint G(p,q) d\mu(p) d\mu(q) 
		& \leq \iint \log \frac{1}{|z_0(p)-z_0(q)|} d\mu(p)
		d\mu(q) + C_1 \\ \nonumber
		& = \iint \log \frac{1}{|z-w|} d\omega(z) d\omega(w) 
		+ C_1 \\ \label{Gpqestimate2} 
		& \leq  -\log \delta + \log 4 + C_1. \end{align}
	Because of \eqref{phiatp0} there is $C_2$
	such that
	\[	\varphi(p) \leq r_0 \log |z_0(p)| + C_2, \qquad 
	\text{ for } p \in U_{\delta_0}(p_0). \]
	Since $|z_0(p)| \leq 2 \delta$ for $z \in \supp(\mu) \subset
	\overline{U_{2\delta}(p_0)}$, we thus find 
	\begin{equation} \label{Gpqestimate3} 
		\int \varphi(p) d\mu(p) \leq r_0 \log (2\delta) + C_2.
	\end{equation}
	Combining \eqref{Gpqestimate2} and \eqref{Gpqestimate3}
	we obtain
	\[ E_{\varphi}[\mu] \leq (r_0-1) \log \delta + 
	(r_0+2)\log 2  + C_1 + C_2. \]
	Since $\mu$ is a probability measure on $F_0 \subset F$, 
	we have
	$E_{\varphi}(F) \leq E_{\varphi}[\mu]$ and 
	\eqref{EFestimate1} follows.
	
	\medskip
	
	(b) The proof of part (b) is  similar. We take
	$0 < \delta < \delta_{\infty}/2$ and we assume that
	$F \in \mathcal F$ intersects $\partial U_{\delta}(p_\infty)$.
	Let $\widetilde{F}$ be a connected component of $F$
	that intersects $\partial U_{\delta}(p_\infty)$.
	
	Similar to \eqref{diamF0} we now find 
	a sub-continuum $F_0 \subset \widetilde{F} \cap \overline{U_{2\delta}(p_\infty)}$ with
	\[ \diam \left( z_{\infty}(F_0) \right) \geq \delta. \]
	Let $\omega$ be the equilibrium measure of $z_{\infty}(F_0)$, and $\mu$ its pullback under
	the mapping $p \mapsto z_{\infty}(p)$. Then
	\eqref{logestimate1} again holds, and from 
	\eqref{Gnearinfty} we  find for some constant $C_1$,
	\begin{align*} \iint G(p,q) d\mu(p) d\mu(q)
		& \leq \iint \log \frac{1}{|z_{\infty}(p)^{-1} - z_{\infty}(q)^{-1}|}
		d\mu(p) d\mu(q) + C_1 \\
		& = \iint \log \frac{1}{|z^{-1} - w^{-1}|} d\omega(z) d\omega(w) + C_1 \\
		& = \iint \log \frac{|z w|}{|z-w|} d\omega(z) d\omega(w) + C_1 \\
		& \leq - \log \frac{\delta}{4} + 2 \int \log |z| d\omega(z)
		+ C_1.
	\end{align*}
	Since $|z| \leq 2 \delta$ for $z \in \supp(\omega)$ we
	obtain 
	\begin{align} \nonumber \iint G(p,q) d\mu(p) d\mu(q)
		& \leq - \log \delta + \log 4 + 2 \log (2\delta) + C_1 \\
		& = \log \delta + 4 \log 2 + C_1, 
		\label{Gpqestimate4} \end{align}
	which is the analogue of \eqref{Gpqestimate2}, but note
	the different sign with $\log \delta$.
	
	Because of \eqref{phiatpinfty} there is $C_2$
	such that $\varphi(p) \leq r_{\infty} \log |z_\infty(p)| + C_2$
	for $p \in U_{\delta_0}(p_\infty)$,
	which gives us the analogue of \eqref{Gpqestimate3} 
	\begin{equation} \label{Gpqestimate5} 
		\int \varphi(p) d\mu(p) \leq r_{\infty} \log (2\delta) + C_2 
	\end{equation}
	since $\mu$ is supported on $U_{2\delta}(p_{\infty}) \subset
	U_{\delta_0}(p_\infty)$ and $z_{\infty}(p) \leq 2 \delta$
	for $p \in \supp(\mu)$. 
	
	Adding \eqref{Gpqestimate4} and \eqref{Gpqestimate5} we find
	\[ E_{\varphi}\left[\mu\right] \leq (1+r_{\infty}) \log \delta	+
	(r_{\infty} +4) \log 2 + C_1 + C_2. \]
	Since $\mu$ is a probability measure on $F$ we have
	$E_{\varphi}(F) \leq E_{\varphi}[\mu]$ and \eqref{EFestimate2}
	follows.
\end{proof}

\subsection{The max-min problem is well-posed}

If $r_0 > 1$ then it follows from \eqref{EFestimate1} that
$E_{\varphi}(F)$ is small (i.e., very negative) in case
$F$ comes very close to $p_{0}$. If $r_{\infty} > -1$
then similarly $E_{\varphi}(F)$ is small if $F$ comes close
to $p_{\infty}$.
This allows us to prove that the max-min energy problem is well-posed.

\begin{lemma}\label{lem:maxminwelldef}
	Suppose $r_0 > 1$ and $r_\infty > -1$. Then the max-min energy problem is well-posed: that is, $\sup_{F \in \mathcal F} E_\varphi(F)$ is finite.
\end{lemma}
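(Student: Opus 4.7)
The plan is to split $\mathcal F$ into regimes based on how close $F$ comes to $p_0$ and $p_\infty$, and to show in each regime that $E_\varphi(F)$ is uniformly bounded from above. Fix $\delta^* \in (0,\delta_0/2)$ (so automatically $\delta^* < \delta_\infty/2$ by the assumed ordering). If $F \in \mathcal F$ meets $\partial U_{\delta'}(p_0)$ for some $\delta' \leq \delta^*$, then Proposition~\ref{prop:residues}(a) gives $E_\varphi(F) \leq (r_0-1)\log\delta' + C$; the hypothesis $r_0 > 1$ makes the coefficient $r_0-1$ positive, so $E_\varphi(F) \leq (r_0-1)\log\delta^* + C$. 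Analogously, if $F$ meets $\partial U_{\delta'}(p_\infty)$ with $\delta' \leq \delta^*$, Proposition~\ref{prop:residues}(b) together with $r_\infty > -1$ gives $E_\varphi(F) \leq (1+r_\infty)\log\delta^* + C$. It thus suffices to bound $E_\varphi(F)$ uniformly over $F \in \mathcal F_{\delta^*}$, i.e., over $F \subset K^* := X \setminus (U_{\delta^*}(p_0)\cup U_{\delta^*}(p_\infty))$.

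For this remaining regime the plan is to construct, for each such $F$, a probability measure $\nu_F$ on $F$ whose energy $E_\varphi[\nu_F]$ admits a bound independent of $F$; then $E_\varphi(F) \leq E_\varphi[\nu_F]$ completes the proof. Let $q_1,\ldots,q_m$ be the poles of $dV$ other than $p_0$ and $p_\infty$; at these points $\Re V \to +\infty$, so $\nu_F$ must be supported away from them. Choose $\rho > 0$ so that the $\overline{U_\rho(q_j)}$ are pairwise disjoint, each lies in a coordinate chart, and each is disjoint from $U_{\delta^*}(p_0) \cup U_{\delta^*}(p_\infty)$. By Lemma~\ref{lem:prelim}(c), every component $\widetilde F$ of $F$ has diameter at least some $\eta = \eta(\delta^*) > 0$. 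Shrinking $\rho$ so that $2m\rho < \eta/4$, a continuum-topology argument will show that $\widetilde F \setminus \bigcup_j U_\rho(q_j)$ contains a connected sub-continuum of diameter at least some $\eta' > 0$ depending only on $\eta,\rho,m$. A finite cover of $K^{**} := K^* \setminus \bigcup_j U_\rho(q_j)$ by coordinate charts each disjoint from all poles of $dV$ then allows one to refine this further to a sub-continuum $F_0$ of diameter bounded below, contained in a single chart $(U,z)$. Taking $\nu_F$ to be the $z$-pullback of the classical logarithmic equilibrium measure of the planar continuum $z(F_0) \subset \mathbb C$, the capacity bound $\operatorname{cap}(E) \geq \diam(E)/4$ for continua yields a uniform upper bound on the logarithmic energy of $\nu_F$. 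The local comparison \eqref{Gnearp0} transfers this to a uniform upper bound on $\iint G\,d\nu_F\,d\nu_F$, and since $\nu_F$ is supported in $K^{**}$ where $\Re V$ is bounded, $\int \varphi\,d\nu_F$ is also uniformly bounded.

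The main obstacle is the topological extraction of the sub-continuum $F_0$. A priori $\widetilde F$ may wind through small neighborhoods of the $q_j$ many times, so $\widetilde F \setminus \bigcup_j U_\rho(q_j)$ may decompose into many components, and elements of $\mathcal F_{\delta^*} = \overline{\mathcal T_{\delta^*}}$ need not be smooth arcs but only Hausdorff limits of such. The key geometric fact to exploit is that finitely many disjoint balls of small radius $\rho$ cannot hide the entire diameter of a continuum of diameter $\eta \gg \rho$ below a threshold of order $\eta/m$, so a suitable sub-continuum of controlled diameter persists. Once $F_0$ has been extracted and placed inside a single coordinate chart avoiding all poles of $dV$, the remaining capacity and pullback estimates are routine logarithmic potential theory, and combining all three regimes finishes the proof.
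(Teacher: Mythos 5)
Your decomposition into three regimes (close to $p_0$, close to $p_\infty$, and uniformly away from both) is exactly the paper's, and the first two regimes are handled identically via Proposition~\ref{prop:residues} and the positivity of $r_0-1$ and $r_\infty+1$. In the third regime, however, you take a genuinely different route. The paper exploits the homology condition built into Definition~\ref{def:Tdelta}: since $F$ is (a Hausdorff limit of) a multi-contour homologous to a small circle around $p_0$ in $X\setminus\{p_0,p_\infty\}$, it must intersect \emph{any} simple arc $\Gamma$ joining $p_0$ to $p_\infty$. Choosing $\Gamma$ to avoid the remaining poles of $dV$ and fattening it to a tube $\Gamma_\varepsilon$ that still avoids them, one gets for free a sub-continuum of $F$ of diameter at least $\eta$ inside the tube, where $\varphi$ is bounded; no excision of neighborhoods of the poles $q_j$ is ever needed. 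Your approach instead uses only the diameter lower bound of Lemma~\ref{lem:prelim}(c) and then removes small balls around the $q_j$, which buys you independence from the homology condition at this stage but forces you to prove a topological persistence statement that the paper's construction sidesteps entirely.

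That persistence statement is the one place where your argument is asserted rather than proved, and it is the only substantive gap. The claim is true, but it is not automatic: a priori a continuum could be shattered into many small components by removing even a few small balls, so you need an actual argument. One way to do it: pick $p,q\in\widetilde F$ with $d(p,q)\geq\eta$ and consider $f(x)=d(x,p)$ on $\widetilde F$; the image of $\bigcup_j\overline{U_\rho(q_j)}\cap\widetilde F$ under $f$ lies in at most $m$ intervals of length $\leq 2\rho$ each, so $[0,\eta]$ contains a complementary interval $[a,b]$ of length at least $(\eta-2m\rho)/(m+1)$, and a standard separation (boundary bumping) argument produces a single component of $f^{-1}([a,b])$ meeting both $f^{-1}(a)$ and $f^{-1}(b)$, hence of diameter at least $b-a$ and disjoint from the balls. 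With that lemma in hand, your subsequent reduction to a single chart (via a Lebesgue number, as in Lemma~\ref{lem:diameterestimate}), the capacity bound $\operatorname{cap}\geq\operatorname{diam}/4$, the comparison \eqref{Gnearp0}, and the boundedness of $\varphi$ on $K^{**}$ do complete the proof along the same lines as the paper's final estimates. I would either supply the topological lemma or switch to the paper's tube construction, which avoids the issue.
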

\begin{proof}
	Let $m = \max(r_0-1, r_{\infty}+1) > 0$ and 
	let $C$ be as in Proposition~\ref{prop:residues}.	
	We fix $\delta = \delta_0/4$, so that by Proposition~\ref{prop:residues} (and the standing assumption $\delta_0 \leq \delta_\infty$),
	\begin{equation}\label{eq:upper bound well-def}
		E_\varphi(F) \leq m \log \delta + C < \infty
	\end{equation}
	for all $F \in \mathcal F$ that intersect $\partial U_\delta(p_0)$ or $\partial U_\delta(p_\infty)$.
	
	Now if $F \in \mathcal F$ does not intersect $\partial U_\delta(p_0)$ or $\partial U_\delta(p_\infty)$, any
	connected component $\widetilde{F}$ of $F$ 
	satisfies $\widetilde{F} \subset U_\delta(p_0)$, $\widetilde{F} \subset U_\delta(p_\infty)$ or $\widetilde{F} \subset X \setminus (U_\delta(p_0) \cup \partial U_\delta(p_\infty))$.
	In the first case, we can take a $\delta' < \delta$ for which 
	$\widetilde{F} \cap \partial U_{\delta'}(p_0) \neq \emptyset$ and apply Proposition~\ref{prop:residues} to conclude that
	\[
	E_\varphi(F) \leq (r_0 - 1) \log \delta' + C \leq m \log \delta + C.
	\]
	That is, the same upper bound as in \eqref{eq:upper bound well-def} can be used. 
	Similarly, 
	\[
	E_\varphi(F) \leq (r_\infty + 1) \log \delta' + C \leq m \log \delta + C
	\]
	in the second case and the upper bound from \eqref{eq:upper bound well-def} again works.
	It remains to find an upper bound for all $F \in \mathcal F$ with $F \subset X \setminus (U_\delta(p_0) \cup \partial U_\delta(p_\infty))$.
	
	Let $\Gamma$ be a simple curve connecting $p_0$ and $p_\infty$ without containing any further poles of $dV$.
	Then for all $\varepsilon > 0$ small enough, the fattened set $\Gamma_\varepsilon = \{p \in X : d(p,\Gamma) \leq \varepsilon\}$ does not contain any other poles of $dV$ either.
	We fix such a value of $\varepsilon$ so that $\varepsilon < \delta$ as well and define $K = \Gamma_\varepsilon \setminus (U_\delta(p_0) \cup U_\delta(p_\infty))$.
	The external field $\varphi$ is continuous and hence bounded on the compact set $K$, say $|\varphi| \leq M$ on $K$ for some $M \in \mathbb R$.
	
	As $K$ is compact, we can cover it with a finite number of coordinate charts $(U_j, z_j)$, $j = 1, \ldots, n$.
	By Lemma~\ref{lem:diameterestimate}, there are $\eta > 0$ and $C > 0$ such that for every subset $K'$ of $K$ with diameter at most $\eta$, we have $K' \subset U_j$ for some $j$ and $\diam(K') \leq C \diam (z_j(K'))$.
	Without loss of generality, we may assume $\eta \leq \varepsilon$.

	Now take $F \in \mathcal F$ with $F \subset X \setminus (U_\delta(p_0) \cup \partial U_\delta(p_\infty))$.
	Then $K \cap F$ contains a continuum $F_0$ with diameter $\eta$, namely a continuum that connects $\partial \Gamma_\eta$ with $\Gamma$.
	Hence there is a $j = 1,\ldots,n$ such that $F_0 \subset \tilde{U}_j$.
	Since $\diam(F_0) \leq C \diam(z_j(F_0))$, it follows that $z_j(F_0)$ is a continuum in $\mathbb C$ with (Euclidean) diameter at least $\frac\eta{C}$.
	
	Let $\omega$ be the equilibrium measure of $z_j(F_0)$ (in the usual logarithmic potential theory in the plane) without external field, and let $\mu$ be the pullback of $\omega$ by the local coordinate $z_j$.
	By similar arguments as given in the proof of Proposition~\ref{prop:residues}, we obtain
	\[
	\iint G(p,q) d \mu(p) d \mu(q) \leq -\log \left(\frac{\eta}{4 C}\right) + C_j = -\log \eta + \log 4 + \log C + C_j
	\]
	for some constant $C_j$ that only depends on $U_j$.
	Taking $\tilde{C} = \max_j C_j$ and using that $|\varphi| \leq M$ on $\supp(\mu) \subset K$, we thus find
	\begin{multline*}
		E_\varphi[\mu] = \iint G(p,q) d \mu(p) d \mu(q) + \int \varphi(p) d \mu(p) 
		\\ \leq -\log \eta + \log 4 + \log C + C_j + M \leq  -\log \eta + 2 \log 2 + \log C + \tilde{C} + M. 
	\end{multline*}
	Consequently, 
	\[
	E_\varphi(F) \leq E_\varphi(F_0) \leq E_\varphi[\mu] \leq -\log \eta + 2 \log 2 + \log C + \tilde{C} + M.
	\]
	As the upper bound is independent of $F$, this concludes the proof.
\end{proof}

\subsection{Continuity of the energy functional}

Rakhmanov \cite[Theorem~9.8]{Rak12} established the following fundamental result
for the complex plane. His arguments extend to the 
higher genus case and this was already done  by
Chirka \cite[Section 2.9]{Chi19} in the unweighted
case. We formulate the continuity
of the weighted energy functional for the case
of external field $\varphi = \Re V$ that is of interest
in the paper, but it holds for more general $\varphi$.

\begin{proposition} \label{prop:EFcontinuity}
	The weighted energy functional $F \mapsto E_\varphi(F)$
	is continuous on $\mathcal F$.
\end{proposition}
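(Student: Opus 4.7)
The plan is to prove continuity at a given $F \in \mathcal F_{\delta_F}$ by establishing upper and lower semi-continuity separately. Fix a sequence $F_n \in \mathcal F$ with $F_n \to F$ in the Hausdorff distance, and set $h_n = d_H(F, F_n) \to 0$. Since $F \subset X \setminus (U_{\delta_F}(p_0) \cup U_{\delta_F}(p_\infty))$, for all sufficiently large $n$ every $F_n$ lies in a fixed compact set $K \subset X \setminus \{p_\infty\}$ on which $\varphi = \Re V$ is continuous and bounded. The aim is then to show $\liminf_n E_\varphi(F_n) \geq E_\varphi(F)$ and $\limsup_n E_\varphi(F_n) \leq E_\varphi(F)$.

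For the lower semi-continuity, I would consider the equilibrium measures $\mu_n := \mu^{F_n}$. These are probability measures supported in $K$, so by Prokhorov's theorem a subsequence $\mu_{n_k}$ converges weakly to some probability measure $\mu^*$, and $\supp(\mu^*) \subset F$ because $F_n \to F$ in Hausdorff distance. Since the bipolar Green's kernel $G$ is lower semi-continuous on $K \times K$ and $\varphi$ is continuous there, the standard lower semi-continuity of the energy functional under weak$^*$ convergence yields $E_\varphi[\mu^*] \leq \liminf_k E_\varphi[\mu_{n_k}]$. Combined with $E_\varphi(F) \leq E_\varphi[\mu^*]$, this gives the bound $E_\varphi(F) \leq \liminf_n E_\varphi(F_n)$.

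For the upper semi-continuity, the idea is to transfer $\mu := \mu^F$ to $F_n$. I would choose a measurable nearest-point retraction $\pi_n : F \to F_n$, so that $d(p, \pi_n(p)) \leq h_n$ for all $p \in F$, and set $\nu_n := (\pi_n)_* \mu \in \mathcal E^1(F_n)$, giving $E_\varphi(F_n) \leq E_\varphi[\nu_n]$. The external-field term $\int \varphi \, d\nu_n = \int \varphi \circ \pi_n \, d\mu$ converges to $\int \varphi \, d\mu$ by uniform continuity of $\varphi$ on $K$. For the double integral, I would split the integration at a small $\eta > 0$: on $\{d(p,q) \geq \eta\}$ the kernel $G$ is uniformly continuous and $d(\pi_n(p), \pi_n(q)) \geq \eta - 2 h_n$, so dominated convergence applies. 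On $\{d(p,q) < \eta\}$ the contribution to $\iint G \, d\mu \, d\mu$ tends to $0$ as $\eta \to 0$ by absolute continuity of the energy integral (since $E[\mu] < \infty$), and a uniform-in-$n$ analogue is needed for $\iint G \, d\nu_n \, d\nu_n$. The latter is obtained by first approximating $\mu$ in energy norm by measures with bounded Green's potential (for instance via balayage onto slightly enlarged compact neighborhoods of $F$), transferring those to $F_n$, and then diagonalizing the resulting two-parameter limit.

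The main obstacle is this last step: controlling the near-diagonal contribution to $\iint G \, d\nu_n \, d\nu_n$ uniformly in $n$. The retraction $\pi_n$ can compress points of $F$ whose images happen to lie near the same point of $F_n$, thereby amplifying the logarithmic singularity of $G$. In the planar case this is handled by Rakhmanov \cite[Theorem~9.8]{Rak12} and Chirka \cite[Section~2.9]{Chi19} through a smoothing-and-diagonalization argument; the extension to the Riemann surface $X$ is essentially local and relies only on the asymptotics \eqref{Gnearp0} of the bipolar Green's kernel, which differs from the planar logarithmic kernel by a bounded harmonic correction.
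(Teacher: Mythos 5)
Your proposed split into lower and upper semi-continuity is not the route taken in the paper, and the upper half has a genuine gap that you acknowledge but do not close. The paper's proof does not use a nearest-point retraction at all; instead, given $F_1, F_2 \in \mathcal F_\delta$ with $d_H(F_1,F_2)<\eta$, it takes $\mu_1 = \mu^{F_1}$ and lets $\mu_2$ be the \emph{balayage} of $\mu_1$ onto $F_2$, then uses the identity (the analogue of \cite[Lemma~9.6]{Rak12})
\[
\iint G\, d\mu_2\, d\mu_2 = \iint G\, d\mu_1\, d\mu_1 - \iint G_\Omega\, d\mu_1\, d\mu_1 + 2\int G_\Omega(p,p_\infty)\, d\mu_1(p),
\]
where $G_\Omega \geq 0$ is the Green's function of $\Omega = X\setminus F_2$. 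The middle term is nonnegative, so one gets an inequality in the right direction, and the remainder $2\int G_\Omega(p,p_\infty)\,d\mu_1$ is controlled by Lemma~\ref{lem:lemma33}, $G_\Omega(p,p_\infty)\le C\sqrt{d_H(p,F_2)}$. The point is that balayage never \emph{increases} the pure Green's energy beyond this controlled correction, while a pushforward $(\pi_n)_*\mu$ genuinely can: $\pi_n$ may collapse a set of positive $\mu$-mass onto a small arc of $F_n$, driving $\iint G\,d\nu_n\,d\nu_n$ to $+\infty$ even though $E[\mu]<\infty$. Approximating $\mu$ in energy norm by measures with bounded potential does not repair this, because bounded potential for $\mu_k$ on $F$ does not give bounded potential for $(\pi_n)_*\mu_k$ on $F_n$ uniformly in $n$. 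This is not a technicality you can borrow from \cite[Theorem~9.8]{Rak12}: Rakhmanov's argument is the balayage one, not a retraction one.

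You also assume $\varphi$ is continuous and bounded on the ambient compact $K$, but that is false in the paper's setting: $dV$ may have simple poles with negative residue lying in $K$, at which $\varphi\to+\infty$. This is why the paper needs Lemma~\ref{lem:boundextfield} to replace $\varphi$ by the truncation $\varphi_m=\min(\varphi,m)$ on $\mathcal F_\delta$ without changing $E_\varphi$. Without this truncation, your claim that $\int\varphi\,d\nu_n\to\int\varphi\,d\mu$ by uniform continuity is unjustified, and even the weak$^*$ convergence step in your lower semi-continuity argument needs care (though there it works because $\varphi$ is l.s.c.\ and bounded below, so $\mu\mapsto\int\varphi\,d\mu$ is l.s.c.). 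For the external-field term the paper instead uses the harmonic extension $\tilde\varphi_m$ of $\varphi_m$ to $X\setminus F_2$, the balayage identity $\int\varphi_m\,d\mu_2 = \int\tilde\varphi_m\,d\mu_1$, and the uniform bound $|\tilde\varphi_m-\varphi_m|\le\varepsilon$ near $F$ from Lemma~\ref{lem:lemma34}. Your lower semi-continuity argument is sound once the $\varphi_m$-truncation is in place, but it only gives one inequality; the paper's balayage argument yields $|E_\varphi(F_1)-E_\varphi(F_2)|\le\varepsilon$ symmetrically in a single stroke.
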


To prepare for the proof of Proposition~\ref{prop:EFcontinuity} we need three lemmas.
The first lemma provides an estimate for the usual 
Green's function $G_{\Omega}$ of 
$\Omega = X \setminus F$ with $F \in \mathcal F_{\delta}$. 
Recall that the Green's function $(p,q) \mapsto G_{\Omega}(p,q)$
is non-negative for $p,q \in X \times X$, zero whenever 
$p \in F$ or $q \in F$ and for a fixed $q \in \Omega$, 
$p \mapsto G_{\Omega}(p,q)$ is continuous on $X$, 
harmonic on $\Omega$, with  
\[ G_{\Omega}(p,q) = -\log |z(p)| + O(1)   \text{ as } p \to q \]
where $p \mapsto z(p)$ is a local coordinate around $q$.
The following estimate for the Green's function
is contained in \cite[Lemma~9.9]{Rak12} for compacts
in the complex plane. The extension to 
the higher genus case is mentioned in \cite[p.~331]{Chi19}.
\begin{lemma} \label{lem:lemma33}
	For every $0 < \delta < \delta_0$ there is a constant $C = C(\delta) > 0$
	such that for every $F \in \mathcal F_{\delta}$ and every $p \in X \setminus
	U_{\delta/2}(p_{\infty})$, one has
	\[ G_{\Omega}(p,p_{\infty}) \leq C \sqrt{d_H(p,F)}, \]
	where $\Omega = X \setminus F$.
\end{lemma}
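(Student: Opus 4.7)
The plan is to follow Rakhmanov's argument \cite[Lemma~9.9]{Rak12} (cf.\ \cite[p.~331]{Chi19}), transferring a Beurling-type projection estimate from the plane through local coordinate charts. Write $r = d(p,F)$ and choose $q_0 \in F$ with $d(p,q_0)=r$. First I would observe that, since $F \in \mathcal F_{\delta}$ is contained in $X \setminus U_{\delta}(p_{\infty})$, the Green's function $G_{\Omega}(\cdot, p_{\infty})$ is bounded above on $X \setminus U_{\delta/2}(p_{\infty})$ by a constant $M = M(\delta)$ (by comparison with $G_{X \setminus \overline{U_{3\delta/4}(p_{\infty})}}(\cdot, p_{\infty})$ via the maximum principle). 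Consequently, when $r$ is not small the desired bound is automatic, so one may restrict to the regime $r \le r_1$ for a small threshold $r_1 = r_1(\delta)$.

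Next, let $\wt F \subset F$ denote the connected component of $F$ containing $q_0$. By Lemma~\ref{lem:prelim}(c), $\diam(\wt F) \geq \eta$ for some $\eta = \eta(\delta) > 0$, and the maximum principle gives $G_{\Omega}(p, p_{\infty}) \leq G_{X \setminus \wt F}(p, p_{\infty})$, so it suffices to bound the latter. I would then fix a local coordinate chart $(U, z_{q_0})$ centered at $q_0$ of a fixed positive size $R = R(\delta)$, uniformly in $q_0$ by compactness of the locus where $q_0$ lies (a fixed compact subset of $X$ independent of $F$). For $r < r_1$ small enough, $p \in U$, and $z_{q_0}(\wt F \cap \overline{U})$ contains a continuum $K$ through the origin of Euclidean diameter bounded below by $\eta' = \eta'(\delta) > 0$: either $\wt F \subset U$ (in which case the diameter bound transfers through the chart), or $\wt F$ meets $\partial U$ (in which case one extracts a sub-continuum of $\wt F \cap \overline{U}$ joining $q_0$ to $\partial U$).

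Finally, I would compare on $U$ as follows. The function $u := G_{X \setminus \wt F}(\cdot, p_{\infty})$ is nonnegative, harmonic on $U \setminus \wt F$, vanishes quasi-everywhere on $\wt F \cap U$, and satisfies $u \leq M$ on $\partial U$. Pulling back through the chart, $\wt u(w) := u(z_{q_0}^{-1}(w))$ is a nonnegative subharmonic function on $z_{q_0}(U) \setminus K$, vanishing on $K$ and bounded by $M$ on $\partial z_{q_0}(U)$. By Beurling's projection theorem applied to the continuum $K$ of diameter at least $\eta'$ through the origin (see e.g.\ \cite{Ran95}), one obtains $\wt u(w) \leq C_0 \sqrt{|w|}$ in a neighborhood of $0$, for a constant $C_0 = C_0(\eta', R, M)$. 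Translating back yields
\[ G_{\Omega}(p, p_{\infty}) \leq G_{X \setminus \wt F}(p, p_{\infty}) = \wt u(z_{q_0}(p)) \leq C_0 \sqrt{|z_{q_0}(p)|} \leq C \sqrt{d(p, q_0)} = C \sqrt{r}, \]
after absorbing into $C$ the uniform Lipschitz constant relating $|z_{q_0}(\cdot)|$ to the Riemannian distance on $X$.

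The main obstacle is the application of Beurling's projection theorem to $\wt u$: Beurling's theorem is classically stated for functions on all of $\widehat{\mathbb C} \setminus K$, so to apply it to a function defined only on $z_{q_0}(U) \setminus K$ I first need to majorize $\wt u$ by $M$ times the harmonic measure $\omega(\cdot, \partial z_{q_0}(U), z_{q_0}(U) \setminus K)$, and then apply Beurling's estimate to that harmonic measure via a symmetrization onto a slit. A secondary technical point is the uniformity of the chart data (the size $R$ and the bi-Lipschitz constants of $z_{q_0}$) as $q_0$ varies over a compact subset of $X$, which is handled by a standard compactness argument and does not affect the structure of the proof.
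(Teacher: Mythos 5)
Your proposal follows the same route that the paper's one-line proof points to, namely Chirka's transfer of Rakhmanov's Lemma~9.9 to Riemann surfaces via local charts and a Beurling-type projection estimate, using Lemma~\ref{lem:prelim}~(c) to guarantee a continuum of fixed diameter near the point of evaluation. The core of your argument — domain monotonicity to replace $F$ by the single component $\wt F$, extraction of a continuum $K$ through the origin in a uniformly-sized chart, majorization of $\wt u$ by $M$ times the harmonic measure of $\partial z_{q_0}(U)$, and Beurling's projection theorem to bound that harmonic measure by $C\sqrt{|w|}$ — is correct and is exactly what the citation in the paper refers to.

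There is one genuine gap, in the very first step. You claim that $G_\Omega(\cdot, p_\infty)$ is bounded above by $M = M(\delta)$ on $X \setminus U_{\delta/2}(p_\infty)$ \emph{because $F$ avoids $U_\delta(p_\infty)$}, and you justify this ``by comparison with $G_{X \setminus \overline{U_{3\delta/4}(p_\infty)}}(\cdot, p_\infty)$.'' That object does not exist: $p_\infty \notin X \setminus \overline{U_{3\delta/4}(p_\infty)}$, so no Green's function with pole at $p_\infty$ is defined on that domain. More to the point, staying away from $p_\infty$ alone does \emph{not} give a uniform upper bound: if $F$ shrinks to a set of tiny capacity, $G_\Omega(\cdot, p_\infty)$ blows up everywhere. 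The correct input is precisely the diameter lower bound $\diam(\wt F) \geq \eta(\delta)$ from Lemma~\ref{lem:prelim}~(c), which you invoke only later (for the Beurling step) but which is also what makes the a priori bound $M(\delta)$ available. One clean way to get it: write $G_{X \setminus \wt F}(p,p_\infty) = I[\mu^*] - \int G(p,q)\,d\mu^*(q)$ with $\mu^*$ the (unweighted) bipolar Green's equilibrium measure of $\wt F$; then $I[\mu^*]$ is bounded above uniformly by comparing with the pullback of the equilibrium measure of a line segment of length $\geq \eta$ (exactly as in the proof of Proposition~\ref{prop:residues}), and $\int G(p,q)\,d\mu^*(q)$ is bounded below since $p$ and $\supp\mu^*$ both avoid $U_{\delta/2}(p_\infty)$. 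With this replacement the rest of your argument stands.
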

\begin{proof}
	The argument of \cite[p.~331]{Chi19} applies
	since the diameter of each component of $F$ is
	at least $\eta > 0$ by Lemma~\ref{lem:prelim} (c). 
\end{proof}

Of course, it is possible that $F \in \mathcal F_\delta$ contains a pole of $dV$ where $\varphi \to +\infty$.
The next lemma shows that for our purposes, we may replace $\varphi$ by a continuous external field on $\mathcal F_\delta$.
For every $m \in \mathbb R$, we denote $\min(\varphi, m)$ by $\varphi_m$, which is a continuous function on $F$.

The proof is inspired by the proof of Theorem~I.1.3(b) in \cite{ST97}. 
However, in \cite{ST97}, the underlying set of the equilibrium problem is fixed, while here we have a family of sets.
Therefore, we decided to give a full proof.

\begin{lemma}\label{lem:boundextfield}
	For every $\delta < \delta_0$, there exists an $m \in \mathbb R$ such that for every $F \in \mathcal F_\delta$, the equilibrium measures of $F$ in the external field $\varphi$ and $\varphi_m = \min(\varphi,m)$ are the same.
	Moreover, for the shared equilibrium measure $\mu$, we have $\supp \mu \subset \{\varphi \leq m\}$ and $\varphi = \varphi_m$ on $\supp \mu$.
	Finally, $E_\varphi$ and $E_{\varphi_m}$ agree as functions on $\mathcal F_\delta$.
\end{lemma}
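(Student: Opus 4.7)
The plan is to study the auxiliary equilibrium measure $\mu^F_m$ in the external field $\varphi_m = \min(\varphi,m)$, which is continuous and bounded on the compact set $K := X \setminus (U_\delta(p_0) \cup U_\delta(p_\infty))$, and to show that when $m$ is large enough independently of $F \in \mathcal F_\delta$ the support of $\mu^F_m$ already lies in $\{\varphi \leq m\}$. Once this is established, $\varphi = \varphi_m$ on $\supp \mu^F_m$, so the variational conditions of Proposition~\ref{prop:eqmeasure} for $\mu^F_m$ in external field $\varphi_m$ imply the corresponding conditions in external field $\varphi$ (the inequality on $F$ being strengthened by $\varphi \geq \varphi_m$). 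Uniqueness in Proposition~\ref{prop:eqmeasure} then forces $\mu^F = \mu^F_m$, and $E_\varphi(F) = E_{\varphi_m}(F)$ is immediate from $\varphi = \varphi_m$ $\mu^F$-a.e.

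Two preliminary finite quantities are crucial. First, $G_{\min} := \inf_{K \times K} G > -\infty$, since $G$ is lower semi-continuous on this compact set away from $p_\infty$. Second, $\varphi_{\min} := \inf_K \varphi > -\infty$, since the only points of $X$ at which $\varphi = -\infty$ are $p_0$ and possibly $p_\infty$, which lie outside $K$. Since every $F \in \mathcal F_\delta$ is contained in $K$, these bounds control the integrals $\int G(p,q) d\mu(q)$ and $\int \varphi_m d\mu$ from below (the latter whenever $m \geq \varphi_{\min}$, so that $\varphi_m \geq \varphi_{\min}$ on $K$).

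The heart of the argument is a uniform upper bound on the equilibrium constant $c^F_m = E[\mu^F_m] + E_{\varphi_m}(F)$. Since $\varphi_m \leq \varphi$, we have $E_{\varphi_m}(F) \leq E_\varphi(F) \leq C_1$, where $C_1 := \sup_{F \in \mathcal F_\delta} E_\varphi(F) < \infty$ by Lemma~\ref{lem:maxminwelldef}; and, assuming $m \geq \varphi_{\min}$, $E[\mu^F_m] = E_{\varphi_m}(F) - \int \varphi_m d\mu^F_m \leq C_1 - \varphi_{\min}$. Therefore $c^F_m \leq 2C_1 - \varphi_{\min}$, a bound independent of both $F$ and $m$. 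Fixing any $m$ with $m \geq \varphi_{\min}$ and $m > 2C_1 - \varphi_{\min} - 2G_{\min}$, the variational equality $2\int G(p,q) d\mu^F_m(q) + \varphi_m(p) = c^F_m$ (valid quasi-everywhere on $\supp \mu^F_m$, hence $\mu^F_m$-a.e.) combined with $\int G(p,q) d\mu^F_m(q) \geq G_{\min}$ forces $\varphi_m(p) < m$ for $\mu^F_m$-a.e.~$p$, so $\mu^F_m(\{\varphi \geq m\}) = 0$. Since $\varphi$ is lower semi-continuous, $\supp \mu^F_m \subset \overline{\{\varphi < m\}} \subset \{\varphi \leq m\}$, and the remaining conclusions follow as described in the first paragraph.

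The only real subtlety is translating the quasi-everywhere variational identity into one valid $\mu^F_m$-a.e., which is standard since sets of zero capacity are null for any probability measure of finite energy. Otherwise the argument depends only on the uniformity of the constants $G_{\min}$, $\varphi_{\min}$, and $C_1$ over $\mathcal F_\delta$, which is exactly what Lemma~\ref{lem:maxminwelldef} and the geometry of $K$ provide.
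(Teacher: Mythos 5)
Your proof is correct, but the route is genuinely different from the paper's. Both arguments require the same three uniform inputs on $K = X \setminus (U_\delta(p_0) \cup U_\delta(p_\infty))$ --- the bound $C_1 = \sup_{F \in \mathcal F_\delta} E_\varphi(F)$ from Lemma~\ref{lem:maxminwelldef}, $G_{\min} = \inf_{K\times K} G$, and $\varphi_{\min} = \inf_K \varphi$ --- and end up choosing $m$ of essentially the same size. The difference is in what is done with them. The paper never touches the variational conditions: it establishes the pointwise lower bound $G(p,q) + \tfrac12(\varphi_m(p)+\varphi_m(q)) \geq M+1$ off $K_m \times K_m$, decomposes the energy integral of the equilibrium measure $\mu$ over $F_m \times F_m$ and its complement, and argues by contradiction (restricting to $F_m$ and renormalizing would strictly lower the energy) that $\mu(F_m)=1$. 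This is the Saff--Totik Theorem~I.1.3(b) scheme transported to a family of sets. You instead work directly with the Euler--Lagrange characterization in Proposition~\ref{prop:eqmeasure}: you derive the identity $c^F_m = E[\mu^F_m] + E_{\varphi_m}[\mu^F_m]$ by integrating the variational equality against $\mu^F_m$, bound this equilibrium constant uniformly by $2C_1 - \varphi_{\min}$, and then use the variational equality plus $\int G\,d\mu^F_m \geq G_{\min}$ to conclude $\varphi_m < m$ on $\supp \mu^F_m$ at one stroke. This avoids the conditional measure $\tilde\mu$ and the contradiction argument entirely, and is arguably cleaner; its price is that it leans on the variational characterization rather than purely on energy minimality and uniqueness. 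The concluding transfer of variational conditions from $\varphi_m$ to $\varphi$ (equality preserved on $\supp\mu^F_m$ because $\varphi=\varphi_m$ there, inequality on $F$ strengthened because $\varphi \geq \varphi_m$) and the appeal to uniqueness is a clean way to finish, and it matches the paper's conclusion.

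Two small points worth noting explicitly in a polished write-up: you should state that $E_{\varphi_m}(F) < \infty$ (hence $\mu^F_m$ exists) because $\varphi_m$ is bounded on $K$ and $F$ has positive capacity; and your remark at the end about the q.e.-to-$\mu^F_m$-a.e.\ transfer relying on finite energy of $\mu^F_m$ is exactly the right justification --- in fact the paper observes that for the $F$ under consideration there are no exceptional polar sets at all, which you could also invoke.
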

\begin{proof}	
	Let $\delta < \delta_0$. 
	Since the max-min energy problem is well-posed (see Lemma~\ref{lem:maxminwelldef}), the number $M = \sup_{F \in \mathcal F_\delta} E_\varphi(F)$ is finite.
	
	We write $K = X \setminus (U_\delta(p_0) \cup U_\delta(p_\infty))$ and note that every $F \in \mathcal F_\delta$ is contained in $K$.
	Because $G$ is bounded away from $-\infty$ 
	away from $p_\infty$ and the external field $\varphi$ is only $-\infty$ at $p_0$ and possibly at $p_\infty$ by assumption, we have
	\begin{equation*}
		m_G = \min_{p,q \in K} G(p,q) > -\infty \quad \text{ and } \quad m_\varphi = \min_{p \in K} \varphi(p) > -\infty.
	\end{equation*}
	We take $m \geq m_\varphi$ such that $m_G + \frac12 m + \frac12 m_\varphi \geq M+1$ and write $K_m = \{p \in K : \varphi(p) \leq m\}$.
	Note that $m$ is independent of $F \in \mathcal F_\delta$.
	
	Moreover, if $(p,q) \notin K_m \times K_m$, then $\frac12 (\varphi_m(p) + \varphi_m(q)) \geq \frac12(m+m_\varphi)$.
	Hence
	\begin{equation}\label{eq:Rak1}
		G(p,q) + \frac12(\varphi_m(p) + \varphi_m(q)) \geq m_G + \frac12 m + \frac12 m_\varphi \geq M+1, \quad (p,q) \notin K_m \times K_m.
	\end{equation}
	
	We now turn to the proof of the main statement. 
	Let $F \in \mathcal F_\delta$ and write $F_m = F \cap K_m = \{p \in F: \varphi(p) \leq m\}$.
	Take $\mu$ as the equilibrium measure of $F$ in the external field $\varphi_m$ and let $\mu^F$ be the equilibrium measure of $F$ in the external field $\varphi$.
	Then $E_{\varphi_m}[\mu] \leq E_{\varphi_m}[\mu^{F}] \leq E_\varphi[\mu^{F}] = E_\varphi(F) < M+1$, so that $\mu(F_m) > 0$ by \eqref{eq:Rak1}. 
	Hence $\tilde\mu = \mu|_{F_m}/\mu(F_m)$ is well-defined.
	Moreover, by \eqref{eq:Rak1},
	\begin{align*}
		E_{\varphi_m}[\mu] &= \left(\iint_{F_m \times F_m} + \iint_{(X \times X) \setminus (F_m \times F_m)}\right) [G(p,q) + \frac12 (\varphi_m(p) + \varphi_m(q))] d \mu(p) d \mu(q) \\
		&\geq \mu(F_m)^2 E_{\varphi_m}[\tilde\mu] + (M+1)(1-\mu(F_m)^2).
	\end{align*}
	Since $E_{\varphi_m}[\mu] < M+1$, the above leads to the contradiction $E_{\varphi_m}[\tilde\mu] < E_{\varphi_m}[\mu]$ unless $\mu(F_m) = 1$, in which case $E_{\varphi_m}[\mu] \geq E_{\varphi_m}[\tilde\mu]$.
	But then $\mu = \tilde\mu$ by the uniqueness of the equilibrium measure, so that $\supp(\mu) \subset F_m$.
	
	Because $\supp(\mu) \subset F_m$, we also have $\varphi = \varphi_m$ on $\supp(\mu)$.
	It directly follows that $E_{\varphi_m}[\mu] = E_\varphi[\mu]$. Since $\varphi_m \leq \varphi$ on $F$, we have 
	\[
	E_\varphi(F) \leq E_\varphi[\mu] = E_{\varphi_m}[\mu] = E_{\varphi_m}(F) \leq E_\varphi(F).
	\]
	Consequently, we have equality throughout the equation and the equilibrium measures of $F$ in the external fields $\varphi$ and $\varphi_m$ are the same.
	The final statement follows directly, which concludes the proof.
\end{proof}

The third lemma gives an estimate on the harmonic
extension. Suppose $\varphi$ is a continuous function
on $F$. Then there is a harmonic function $\widetilde{\varphi}$
on $X \setminus F$ with $\widetilde{\varphi} = \varphi$ on $F$.
It is simply the solution of the Dirichlet problem.
In our setting $\varphi$ itself is harmonic, wherever
it is finite.

Since $\varphi$ is not necessarily continuous on $F$, we replace $\varphi$ by $\varphi_m = \min(\varphi, m)$ from the lemma above.
The following is an estimate on $|\widetilde\varphi_m - \varphi_m|$
near $F$ that is due to Rakhmanov \cite[Lemma~9.7]{Rak12}
in the genus zero case (in a much more precise form, actually).
In the higher genus case we can follow the same proof 
as all the arguments are local in nature.
\begin{lemma} \label{lem:lemma34}
	For every $\delta < \delta_0$, $\varepsilon > 0$ and $m \in \mathbb R$,
	there is $\eta > 0$ such that
	for every $F \in F_{\delta}$ and every 
	$p \in X$ with $d_H(p,F) < \eta$ one has
	\[ \left| \widetilde\varphi_m(p) - \varphi_m(p)  \right| \leq \varepsilon, \]
	where $\widetilde\varphi_m$ is the harmonic extension of $\varphi_m = \min(\varphi,m)$ to $X \setminus F$.
\end{lemma}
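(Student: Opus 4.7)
The idea is to represent $\widetilde\varphi_m$ via the harmonic measure on $F$ and to reduce matters to a uniform estimate showing that this harmonic measure, seen from a point $p$ close to $F$, concentrates on a small neighborhood of the nearest boundary point. The required uniformity ultimately comes from Lemma~\ref{lem:prelim}(c), which gives an $\eta_0 = \eta_0(\delta) > 0$ such that every connected component of every $F \in \mathcal F_\delta$ has diameter at least $\eta_0$.

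First I would fix the auxiliary data. Each $F \in \mathcal F_\delta$ is contained in the compact set $K = X \setminus (U_\delta(p_0) \cup U_\delta(p_\infty))$, on which $\varphi_m$ is continuous (at any remaining pole of $dV$ one has $\varphi \to +\infty$, so $\varphi_m \equiv m$ in a neighborhood) and hence uniformly continuous with some modulus $\omega_{\varphi_m}$ and bounded by some $M < \infty$. For $\eta < \delta/2$, any $p$ with $d(p,F) < \eta$ also lies in $K_{\delta/2} = X \setminus (U_{\delta/2}(p_0) \cup U_{\delta/2}(p_\infty))$ and the same uniform continuity applies. Writing $\omega^p$ for the harmonic measure at $p$ in $\Omega = X \setminus F$, one has $\widetilde\varphi_m(p) = \int_F \varphi_m \, d\omega^p$. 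Pick $q \in F$ with $d(p,q) = d(p,F)$ and any $r > 0$. Splitting the integral over $F \cap B_r(q)$ and $F \setminus B_r(q)$ and adding $|\varphi_m(p) - \varphi_m(q)|$ gives
\[
|\widetilde\varphi_m(p) - \varphi_m(p)| \leq \omega_{\varphi_m}(r) + 2M\, \omega^p(F \setminus B_r(q)) + \omega_{\varphi_m}(\eta).
\]
Given $\varepsilon > 0$, I first fix $r$ so small that $\omega_{\varphi_m}(r) < \varepsilon/3$; the proof then reduces to finding a single $\eta$, independent of $F$ and $q$, so that both $\omega_{\varphi_m}(\eta) < \varepsilon/3$ and $\omega^p(F \setminus B_r(q)) < \varepsilon/(6M)$ whenever $d(p,q) < \eta$.

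The main obstacle is the uniform harmonic-measure decay in the last display. Cover $K$ by finitely many coordinate charts with uniformly controlled distortion, as was already done in the proof of Lemma~\ref{lem:maxminwelldef}, and work in a chart around $q$. Since the component $F_q$ of $F$ through $q$ has diameter at least $\eta_0$, the boundary-bumping lemma for continua shows that the connected component of $F_q \cap \overline{B_r(q)}$ through $q$ itself has diameter at least $r$ (provided $r < \eta_0/2$), so its chart image is a continuum through the chart image of $q$ of Euclidean diameter at least a fixed multiple of $r$. By the Beurling projection theorem, the harmonic measure of $\partial B_r(q)$ in the complement of any such continuum, evaluated at a nearby point $p$, is bounded by $C\sqrt{d(p,q)/r}$ with $C$ depending only on the chart and hence, by finiteness of the cover, only on $\delta$. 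The maximum principle applied to $u(p) := \omega^p(F \setminus B_r(q))$, which vanishes on $F \cap B_r(q)$ and is at most $1$ on $\partial B_r(q)$, transfers this bound to $\omega^p(F \setminus B_r(q))$. Taking $\eta$ so small that $C\sqrt{\eta/r} < \varepsilon/(6M)$ and $\omega_{\varphi_m}(\eta) < \varepsilon/3$ finishes the argument. The delicate point is precisely to guarantee that the Beurling-type estimate has constants depending only on the class $\mathcal F_\delta$ and not on the individual $F$; for this the uniform diameter lower bound on components, together with the compactness of $K$ and a finite chart cover, are the essential ingredients, and this is also the only place where the argument genuinely differs from the planar proof in \cite{Rak12}.
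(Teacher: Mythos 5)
Your argument is correct, and it is essentially the proof the paper delegates to Rakhmanov \cite[Lemma~9.7]{Rak12}: the paper itself gives no details, asserting only that the planar argument is local and carries over, and that argument is precisely the harmonic-measure representation of $\widetilde\varphi_m$ combined with a Beurling-projection estimate near the closest boundary point. You have correctly identified the one point that genuinely needs attention in the surface setting — making the Beurling constant uniform over $\mathcal F_\delta$ via the component-diameter lower bound of Lemma~\ref{lem:prelim}(c), boundary bumping, and a finite chart cover of $K$ with controlled metric distortion — so your write-up supplies exactly what the paper's citation leaves implicit.
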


\begin{proof}[Proof of Proposition~\ref{prop:EFcontinuity}.]
	
	It is enough to show that $F \mapsto E_{\varphi}(F)$
	is continuous on $\mathcal F_{\delta}$ for every $0 < \delta <
	\delta_0$.
	We fix $\delta$ and use Lemma~\ref{lem:boundextfield} to take an $m \in \mathbb R$ so that $E_{\varphi}(F) = E_{\varphi_m}(F)$ for all $F \in \mathcal F_\delta$.
	
	Let $\varepsilon > 0$. Then by Lemmas~\ref{lem:lemma33}
	and~\ref{lem:lemma34}
	there is $\eta > 0$ such that for any $F \in \mathcal F_{\delta}$ and $p \in X$ with $d_H(p,F) < \eta$
	we have $G_{\Omega}(p, p_{\infty}) \leq \frac{\varepsilon}{3}$ and $|\widetilde{\varphi}_m(p)-\varphi_m(p)| \leq \frac{\varepsilon}{3}$
	where $\Omega = X \setminus F$ and  $\widetilde{\varphi}_m$ is the
	harmonic extension of $\varphi_m$ from $F$ to $X \setminus F$.
	
	Now take $F_1, F_2 \in \mathcal F_\delta$ such that $d_H(F_1, F_2) < \eta$.
	Let $\mu_1 = \mu^{F_1}$ be the equilibrium measure of $F_1$ in the external field $\varphi$ and let $\mu_2$ be the balayage of  $\mu_1$ to $F_2$.
	Then $\mu_2$ is a probability measure on $F_2$ such that
	\[ \int \varphi_m d\mu_2 = \int \widetilde{\varphi}_m d\mu_1, \]
	where $\widetilde{\varphi}_m$ is the harmonic extension of
	$\varphi_m$ relative to $F_2$.
	Thus
	\begin{equation}\label{eq:Evarphicont2}
		\left| \int \varphi_m d \mu_2 - \int \varphi_m d \mu_1 \right| = \left| \int (\widetilde{\varphi}_m - \varphi_m) d \mu_1 \right| \leq \frac{\varepsilon}{3}
	\end{equation}
	since $|\widetilde{\varphi}_m - \varphi_m| \leq \frac{\varepsilon}{3}$ on $F_1$.
	
	Using the Green's function $G_\Omega$ for the open set $\Omega = X \setminus F_2$, it follows that
	\begin{multline*}
		\iint  G(p,q) d \mu_2(p) d \mu_2(q) \\
		= \iint G(p,q) d \mu_1(p) d \mu_1(q) - \iint G_\Omega(p,q) d \mu_1(p) d \mu_1(q) + 2 \int G_\Omega(p,p_\infty) d \mu_1(p) \\
		\leq \iint G(p,q) d \mu_1(p) d \mu_1(q) + 2 \int G_\Omega(p,p_\infty) d \mu_1(p),
	\end{multline*}
	where the first identity is the analogue of \cite[Lemma~9.6]{Rak12}.
	Hence
	\begin{equation}\label{eq:Evarphicont1}
		\iint G(p,q) d \mu_2(p) d \mu_2(q) \leq \iint G(p,q) d \mu_1(p) d \mu_1(q) + \frac{2 \varepsilon}{3}
	\end{equation}
	since $G_{\Omega}(p,p_{\infty}) \leq \frac{\varepsilon}{3}$
	for $p \in F_1$.
	Consequently, by \eqref{eq:Evarphicont1}, \eqref{eq:Evarphicont2} and Lemma~\ref{lem:boundextfield},
	\begin{align*} 
		E_\varphi(F_2) = E_{\varphi_m}(F_2) & \leq \iint G(p,q) d \mu_2(p) d \mu_2(q) + \int \varphi_m d \mu_2 \\ 
		& \leq \iint G(p,q) d \mu_1(p) d \mu_1(q) + \int \varphi_m d \mu_1 + \varepsilon \\
		& = E_{\varphi_m}(F_1) + \varepsilon = E_\varphi(F_1) + \varepsilon.
	\end{align*}
	By symmetry, we also have $E_\varphi(F_1) \leq E_\varphi(F_2) + \varepsilon$ and hence $|E_\varphi(F_1) - E_\varphi(F_2)| \leq \varepsilon$.
	This shows that $F \mapsto E_\varphi(F)$ is uniformly continuous on $\mathcal F_\delta$ and concludes the proof.
\end{proof}

\subsection{Proof of Theorem~\ref{thm:residues}}
Since an extremal set stays away from $p_0$ and $p_{\infty}$
in case both $r_0 > 1$ and $r_{\infty} > -1$, the continuity of the weighted energy functional allows us to prove 
Theorem~\ref{thm:residues}.

\begin{proof}[Proof of Theorem~\ref{thm:residues}.]
	Let $m = \max(r_0-1, r_{\infty}+1) > 0$ and 
	let $\delta_0$, $\delta_{\infty}$ and $C$ be as in  
	Proposition~\ref{prop:residues}.
	Pick an arbitrary $F_0 \in \mathcal F$, and take
	$\delta \in(0,1)$ with $2\delta < \min(\delta_0,\delta_{\infty})$
	small enough such that $F_0 \in \mathcal F_{\delta}$ 
	and $m \log \delta + C < E_{\varphi}(F_0)$.
	
	Since $\mathcal F_{\delta}$ is compact in the 
	Hausdorff distance, and $E_{\varphi}$ is continuous on 
	$\mathcal F_{\delta}$, there is $F \in \mathcal F_{\delta}$
	where $E_{\varphi}$ takes its maximum on $\mathcal F_{\delta}$.
	If $F' \in \mathcal F \setminus \mathcal{F}_{\delta}$, then
	either $F'$ intersects $\partial U_{\delta'}(p_0)$ for
	some $\delta' \leq \delta$, or $F'$ intersects 
	$\partial U_{\delta'}(p_{\infty})$ for some $\delta' \leq \delta$.
	In the first case, we find by part (a) of  Proposition~\ref{prop:residues} 
	\begin{multline*} E_{\varphi}(F') \leq (r_0-1) \leq \log \delta' + C
		\leq (r_0-1) \log \delta  + C \\
		\leq m \log \delta + C
		\leq E_{\varphi}(F_0) \leq E_{\varphi}(F). \end{multline*}
	Similarly, in the second case we use part (b) and
	we also find $E_{\varphi}(F') \leq E_{\varphi}(F)$.

The continuum $F$ satisfies the conditions of  Corollary~\ref{cor:maxmin} and the theorem follows. 
\end{proof}

\section{Proof of Theorem~\ref{theo:QDhigher}}
	\label{sec:critical-measures-and-quadratic-differentials}

\subsection{Preparation for the proof} \label{sec:genus1}
By Proposition~\ref{prop:CMhigher} a critical measure $\mu$ satisfies
\eqref{CMQD1} for every $C^1$ vector field $h$. 
As indicated in Section~\ref{sec:statement-of-results}, we use the $(2,-1)$-Cauchy kernel
defined in Proposition~\ref{prop:C21kernel} to extract relevant
information from \eqref{CMQD1}.

In the genus zero case one has \eqref{CM1} and $h$ is a function (not 
a vector field). For the proof of Theorem~\ref{theo:MFR} one
takes 
\begin{equation} \label{CMQD2}  
	h(s) = \frac{1}{z-s} \end{equation}
with a fixed $z \in \mathbb C \setminus \supp \mu$.
Then
\begin{equation} \label{CMQD3} 
	\frac{h(s) - h(t)}{s-t} = \frac{1}{(z-s)(z-t)} \end{equation}
and
\begin{equation} \label{CMQD4} 
	\iint \frac{h(s)-h(t)}{s-t} d\mu(s) d\mu(t) =
	\left[\int \frac{d\mu(s)}{z-s} \right]^2 \end{equation}
which is a crucial step in the proof of Theorem~\ref{theo:MFR}, see e.g. \cite[Proof of Lemma~5.1]{MFR11} or \cite[Proof of Proposition~3.7]{KS15}.

We assume that $X$ has the form \eqref{Xgenus1}  as discussed in Section~\ref{sec:main-result} with bounded real oval $C_1$.
We have an analogue of \eqref{CMQD4} in case $\mu$ is invariant
under the involution \eqref{involution} and its support does
not intersect $C_1$.

\begin{proposition} \label{prop:CMQD} 
	Suppose $\mu$ is a compactly supported measure 
	on $X \setminus ( \{p_\infty\} \cup C_1)$ that is 
	invariant under the involution $\sigma$ from \eqref{involution}. 
	Then there exists $a \in  C_1$ 	such that
	\begin{multline}  \label{CMQD6} 
		\iint \left( C(p,q) C^{(2,-1)}(u,p;a)
		+ C(q,p) C^{(2,-1)}(u,q;a)  - C(u,p) C(u,q) \right) \\
		d\mu(p) d\mu(q) 
		= 0, \qquad u \in X.
	\end{multline}
\end{proposition}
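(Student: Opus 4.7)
The plan is to recast identity \eqref{CMQD6} as the vanishing of a single meromorphic quadratic differential in $u$ on $X$ and then exploit the small dimension of such differentials in genus one. Set
\[ \phi(u) = \int C(u,q)\,d\mu(q), \qquad \psi(u) = \int C^{(2,-1)}(u,p;a)\,\phi(p)\,d\mu(p). \]
Using the $p\leftrightarrow q$ symmetry of the first two terms of the integrand, \eqref{CMQD6} is equivalent to $I(u) := 2\psi(u) - \phi(u)^2 \equiv 0$ as quadratic differentials in $u$ on $X$.

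The parameter $a \in C_1$ is selected by the condition $\phi(a) = 0$. The explicit formula \eqref{Green1}, together with $\bar\tau = -\tau$, yields $G(\sigma p,\sigma q) = G(p,q)$, and combined with $\sigma$-invariance of $\mu$ this propagates to $\sigma$-invariance of the potential $G^\mu = \int G(\cdot,q)\,d\mu(q)$, forcing its normal derivative across the $\sigma$-fixed oval $C_1$ to vanish. Hence $\phi = -2(\partial_p G^\mu)\,dp$ restricts to $C_1$ as a real 1-form, so $\int_{C_1}\phi \in \mathbb R$. On the other hand, the purely imaginary periods of $C(\cdot,q)$ noted after Example~\ref{example42} give $\int_{C_1}\phi \in i\mathbb R$. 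Combining, $\int_{C_1}\phi = 0$, so by the intermediate value theorem $\phi$ has a zero on $C_1$; generically this zero is simple and distinct from the two branch points $p_1, p_2 \in C_1$, and we take $a$ to be such a zero.

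With this choice of $a$, $I \equiv 0$ follows from three observations. First, $I$ has no jump across $\supp(\mu)$: a direct Plemelj--Sokhotski calculation gives $(\phi^2)_+ - (\phi^2)_- = (\phi_++\phi_-)(\phi_+ - \phi_-)$, and the simple-pole normalization \eqref{CMQD5} of $C^{(2,-1)}$ at $p = u$ produces the same jump for $2\psi$, provided $\phi(p)$ on $\supp(\mu)$ is interpreted as the principal value $\tfrac12(\phi_++\phi_-)$. Thus $I$ extends to a meromorphic quadratic differential on $X$ with singularities only at $p_\infty$ of order at most two. Second, on the torus $X$ the space of such differentials is two-dimensional, spanned by $du^2$ and $\wp(u)\,du^2$ with $\wp$ the Weierstrass function of the lattice; so $I = (A + B\wp(u))\,du^2$ for constants $A, B$. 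Third, by property (b) of Proposition~\ref{prop:C21kernel}, $\psi$ has a double zero at $u = a$, and $\phi(a) = 0$ makes $\phi^2$ have a double zero there; hence $I$ has a double zero at $u = a$, giving the two equations $A + B\wp(a) = 0$ and $B\wp'(a) = 0$. Since $a$ is not a half-period, $\wp'(a) \neq 0$ and these force $A = B = 0$, i.e., $I \equiv 0$.

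The principal obstacle is the jump cancellation in the first observation: verifying it requires a careful Plemelj analysis of $\psi(u)$ near $u \in \supp(\mu)$, taking into account both the simple pole of $C^{(2,-1)}(u,p;a)$ at $p = u$ (normalized by \eqref{CMQD5}) and the regularization of $\phi(p)$ as a principal value on $\supp(\mu)$. A secondary point is the selection of $a \in C_1$ away from the branch points $p_1, p_2$, which is generic but needs an additional argument in the degenerate situation where all zeros of $\phi|_{C_1}$ coincide with $p_1$ or $p_2$.
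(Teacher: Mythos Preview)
Your selection of $a\in C_1$ and the final dimension count are essentially the same as the paper's (your period argument $\int_{C_1}\phi=0$ is just a rephrasing of the fact that $G^\mu|_{C_1}$ attains a maximum and a minimum, where $\partial_x G^\mu=0$ and hence $\phi=0$). The real issue is your ``principal obstacle'', the Plemelj jump cancellation for $I=2\psi-\phi^2$. This step is both unjustified and unnecessary.

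It is unjustified because the proposition is stated for an \emph{arbitrary} compactly supported $\sigma$-invariant measure $\mu$ on $X\setminus(\{p_\infty\}\cup C_1)$, with no assumption that $\supp(\mu)$ is a contour. For general $\mu$ there are no $\pm$ boundary values, no Plemelj formula, and no principal-value interpretation of $\phi(p)$ on $\supp(\mu)$; your expression $\psi(u)=\int C^{(2,-1)}(u,p;a)\,\phi(p)\,d\mu(p)$ is not even well-defined as written, since $\phi(p)$ is undefined for $p\in\supp(\mu)$. Even restricting to contour-supported $\mu$, the principal-value bookkeeping you allude to is not carried out, and you explicitly flag it as unresolved.

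It is unnecessary because the pole cancellation happens \emph{pointwise in $(p,q)$ before integration}. For fixed $p,q,a$, both $C(p,q)C^{(2,-1)}(u,p;a)+C(q,p)C^{(2,-1)}(u,q;a)$ and $C(u,p)C(u,q)$ are meromorphic quadratic differentials in $u$ with simple poles at $u=p$, $u=q$ and a double pole at $u=p_\infty$. The normalization \eqref{CMQD5} gives residue $C(p,q)$ at $u=p$ for the first expression, and the simple pole of $C(u,p)$ at $u=p$ gives the same residue $C(p,q)$ for the second; similarly at $u=q$. Hence their difference (the integrand of \eqref{CMQD6}) is, for each fixed $p,q$, meromorphic in $u$ with only a double pole at $p_\infty$. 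Integrating against $d\mu(p)\,d\mu(q)$ then immediately yields a meromorphic quadratic differential in $u$ with at most a double pole at $p_\infty$, for \emph{any} $\mu$, and your dimension/double-zero argument finishes the proof. Replace your jump analysis by this one-line residue check and the gap disappears.
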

\begin{proof}
	We compare the expressions
	\begin{equation} \label{CMQD8} 
		C(p,q) C^{(2,-1)}(u,p;a)
		+ C(q,p) C^{(2,-1)}(u,q;a) \end{equation}
	and
	\begin{equation} \label{CMQD9} C(u,p) C(u,q). \end{equation}
	Both \eqref{CMQD8} and \eqref{CMQD9} are, for fixed $p,q$, and $a$,  
	meromorphic  quadratic differentials in $u$,
	with simple poles in $u=p$, $u=q$ and a double pole at $u=p_{\infty}$. 
	The residue at $u=p$
	is $C(p,q)$ for both expressions, and the residue
	at $u=q$
	is $C(q,p)$, again the same for both of them. These two
	poles disappear if we take the difference, and the only
	pole is the double pole at $u=p_{\infty}$.
	Then the left-hand side of \eqref{CMQD6} 
	is also a meromorphic quadratic differential in $u$
	with a possible double pole at $u=p_{\infty}$ only.
	
	\medskip
	Suppose $a \in X \setminus (\{p_{\infty}\} \cup \supp(\mu))$ 
	is a zero of the meromorphic  differential
	$\int C(u,q) d\mu(q)$. Then it is a double zero
	of $\iint C(u,p) C(u,q) d\mu(p) d\mu(q) = \left(\int C(u,p)d \mu(p)\right)^2$ and since both
	$C^{(2,-1)}(u,p;a)$ and $C^{(2,-1)}(u,q;a)$ have a double zero 
	at $u=a$ as well, see Proposition~\ref{prop:C21kernel} (b), 
	we find
	that  \eqref{CMQD6} has a double zero at $u=a$.
	It has at most a double pole at $u=p_{\infty}$, and there are
	no other poles and zeros (since $X$ has genus one), unless
	it vanishes identically.
	The only non-zero meromorphic quadratic differentials on $X$ that have double zeros and a double pole at $p_\infty$ are
	$ z \frac{dz^2}{w^2}$, $(z-z_1) \frac{dz^2}{w^2}$, $(z-z_2) \frac{dz^2}{w^2}$
	and their scalar multiples. Thus if  $a \not\in \{p_0,p_1,p_2\}$
	where  $p_0 = (0,0)$, $p_1 = (0,z_1)$, $p_2 = (0,z_2)$
	then \eqref{CMQD6} holds. 
	
	\medskip
	We now show that the differential $\int C(u,q) d \mu(q)$ has exactly two zeros on $C_1$: if at least one of them is not at the branch point, then the proof follows by the above discussion; we then will consider the special case where both zeros are at a  branch point.
	Now, let $\mu$ be as in the statement of the proposition. 
	Then 
	\[ G^\mu(u) := \int G(u,q) d\mu(q) \] 
	is real analytic on the cycle $C_1$,
	since the support of $\mu$ is disjoint from $C_1$.
	Since $\mu$ is invariant under $\sigma$, we also have
	$G^\mu(\sigma(u)) = G^\mu(u) $
	which implies that
	$\frac{\partial}{\partial y} G^\mu(u)  = 0$ for $u \in C_1$, 
	if we use $z=x+iy$ as the local coordinate at a point
	$u = (w,z)$ on $C_1$.
	By compactness, $G^\mu$ attains a maximum and a minimum on
	$C_1$. If an extremum is attained at $a\in C_1$, 
	then we also have $\frac{\partial}{\partial x} G^\mu(u) = 0$
	at $u = a$. 
	Hence by \eqref{Cauchy10} we have that $a$ is a zero of
	\[ \int C(u,q)d\mu(q) = - \left(\partial_x - i \partial_y \right) 
	\int G(u,q) d\mu(q). \] 
	Thus, by what we already proved, if $G^\mu$ attains an extremum 
	on $C_1$  at a point $a \not\in \{p_1,p_2\}$ then \eqref{CMQD6} holds. 
	
	\medskip
	Next, we consider the special case $\mu = \delta_q$
	with $q \in C_2 \setminus \{p_0, p_{\infty}\}$ on the unbounded cycle. 
	On the complex torus the extrema of $G(u,q)$ are 
	attained at the zeros of
	\begin{equation} \label{CMQD10} 
		u \mapsto \frac{\theta_1'(u-v)}{\theta_1(u-v)} 
		-\frac{\theta_1'(u)}{\theta_1(u)}
	\end{equation}
	where $v$ is the image of $q$ under the Abel map,
	see \eqref{Cauchy1} with $\Im v = 0$.
	The elliptic function \eqref{CMQD10} has two simple poles 
	at $u=0$ and $u=v$. By Abel's theorem the two zeros 
	add up to $v$ (modulo $\Lambda = \mathbb Z + \tau \mathbb Z$). 
	Since $v \neq \frac{1}{2}$, (since $v= \frac{1}{2}$ corresponds
	to $q = p_0$ in $X$), the two zeros cannot be 
	$\frac{1}{2} \tau$ and 
	$\frac{1}{2} + \frac{1}{2} \tau$.
	On $X$ this means that $C(u,q)$ does not vanish at both
	$p_1$ and $p_2$. 
	\medskip
	
	Finally, consider the case that $G^\mu$ attains its extrema
	on $C_1$ only at the branch points $p_1$ and $p_2$, so that 
	$\int C(u,p) d\mu(p)$ vanishes at both $p_1$ and $p_2$. 
	Pick $q$ on the unbounded cycle, $q \neq p_0$, $q \neq p_\infty$
	and consider $\mu_t = \mu + t \delta_{q}$.
	Since $C(u,q)$ does not vanish at both $p_1$ and $p_2$,
	we find that for $t > 0$,
	\[ \int C(u,p) d\mu_t(p) =  \int C(u,p) d\mu(p) + t C(u,q) \]
	is not zero at both $p_1, p_2$, 
	and therefore it will have a zero somewhere else on the cycle.
	The zero depends on $t>0$, say $a_t \in C_1$, and 
	the identity \eqref{CMQD6} holds for $\mu_t$ and $a_t$. 
	Letting $t \to 0+$ and by using a continuity and compactness argument,  
	we find that \eqref{CMQD6} holds for $t=0$
	as well, where $a\in C_1$ is any limit point 
	of $(a_t)_{t > 0}$ as $t \to 0+$.
\end{proof}

If $\int |C(u,q)| d\mu(q) < \infty$, which is the case
for $u$-a.e.\ on $X$,  then \eqref{CMQD6} can be rewritten to
\begin{multline} \label{CMQD7} 
	\iint \left( C(p,q) C^{(2,-1)}(u,p;a)
	+ C(q,p) C^{(2,-1)}(u,q;a) \right) d\mu(p) d\mu(q) \\
	= \left[ \int C(u,q) d\mu(q) \right]^2.
\end{multline}
The identity \eqref{CMQD7} can be viewed as a genus one analogue
of \eqref{CMQD4}. Note however that we do not have an analogue of the divided difference identity \eqref{CMQD3}.

\subsection{Proof of Theorem~\ref{theo:QDhigher}}\label{sec:proof-of-main-result}

\begin{proof}
	Suppose $\mu$ is a critical measure in the external
	field $\varphi = \Re V$ that is invariant under
	the involution $\sigma$. Suppose $\supp(\mu) \subset X \setminus (\{p_{\infty}\} \cup C_1)$.  Let $a \in C_1$ be as in Proposition~\ref{prop:CMQD}. 
	
	We take $h(p) = C^{(2,-1)}(u,p;a)$ which is a $C^1$
	vector field with a double pole at $p=a$ and a simple
	pole at $p=u$. Thus $h$ is a $C^1$ vector field
	on $\supp(\mu)$ if $u \not\in \supp(\mu)$, as it is already
	assumed that $a \not\in \supp(\mu)$. Thus by 
	Proposition~\ref{prop:CMhigher}~(b) we have 
	$D_{V,h}(\mu) = 0$ provided that $u \not\in \supp(\mu)$.
	
	With an approximation argument as in \cite[Lemma~3.5]{KS15}
	we can extend the equality $D_{V,h}(\mu) = 0$ from
	$u \not\in \supp(\mu)$  to any $u \in X \setminus \{p_{\infty}\}$ for which 
	\[ \int \frac{d\mu(q)}{d(u,q)} < +\infty. \]
	which holds for a.e.\ $u$ on $X$. 
	
	From \eqref{CMhigher3} we thus have
	\begin{multline} \label{QDhigher3} 
		\iint \left( C(p,q) C^{(2,-1)}(u,p;a) 
		+ C(q,p) C^{(2,-1)}(u,q;a) \right) d\mu(p) d\mu(q) \\
		= \int  C^{(2,-1)}(u,q;a) dV(q) d\mu(q),
		\quad \text{$u$-a.e.\ on $X$.} \end{multline}
	Using \eqref{CMQD6} we get  from \eqref{QDhigher3} that
	\begin{equation} \label{QDhigher4} 
		\left[\int C(u,q) d\mu(q) \right]^2  
		= \int  C^{(2,-1)}(u,q;a) dV(q) d\mu(q),
		\quad \text{$u$-a.e.\ on $X$,} \end{equation}
	which is easily seen to be \eqref{QDhigher2} with $\upomega$
	given by \eqref{QDhigher3}. 
	
	Note that $\upomega$ is indeed a meromorphic quadratic differential on $X$,
	since the pole at $u=q$ cancels out in the difference
	\[ C(u,q) dV(u) - C^{(2,-1)}(u,q;a) dV(q) \]
	and therefore the integral transform in \eqref{QDhigher1} indeed extends analytically across the support $\Sigma$ of the measure $\mu$.
	
	\medskip
	For the proof of (b) we can follow the proof of Proposition 3.8 in \cite[Proposition 3.8]{KS15}, since all arguments are local in nature.
	
	\medskip
	
	From \eqref{QDhigher1} and working in a local coordinate
	around $u \in \Sigma$, we have
	\[ \left(\int C(u,q) d\mu(q) - \frac{dV(u)}{2}\right)_{\pm}
	= \pm Q(u)^{1/2} du \]
	where $\pm$ denote the boundary limits on $\Sigma$.
	Thus
	\[ \left(\int C(u,q) d\mu(q) - \frac{dV(u)}{2}\right)_{+}
	= - \left(\int C(u,q) d\mu(q) - \frac{dV(u)}{2}\right)_{-} \]
	or put otherwise
	\[ \left(\int C(u,q) d\mu(q) \right)_+ + \left(\int C(u,q) d\mu(q) \right)_-
	- dV(u) = 0 \qquad \text{ on } \Sigma \]
	as $dV$ does not have a jump on $\Sigma$.
	If $g(u) = \int \mathcal G(u,p) d\mu(p)$, then
	\[ g_+(u) + g_-(u) + V(u) = \text{const} \quad \text{ on } D \cap \Sigma. \]
	Taking the real part we obtain part (c). Taking
	the imaginary part and using the Cauchy-Riemann equations,
	we obtain part (d).  
\end{proof}

\appendix

\section{Existence and symmetry of the bipolar Green's function}

Let $X$ be a Riemann surface and let $p_\infty$ be a distinguished point at infinity.
Our goal is to prove Proposition~\ref{prop:bipolarGreen}
and in particular part (d) that gives us the symmetry
$G(p,q) = G(q,p)$ of the bipolar Green's function.
For this proof we rely on Riemannian geometry
where the existence of a symmetric Green's
function for the Laplacian is known. 

\subsection{Riemann surfaces as Riemannian manifolds}\label{sec:riemannianmanifold}

We first recall some relevant notions from 
Riemannian geometry.
The main reference in this section is \cite{Jost06}.
As shown in Lemma~2.3.3 in \cite{Jost06}, $X$ admits a conformal Riemannian metric $\rho$, which is given in the local coordinate $(U,z)$ by
\[
\rho_U(z)^2 dz d\overline{z}, \quad \rho_U(z) > 0,
\]
where $\rho_U$ is smooth and which transforms correctly under a holomorphic change of local coordinates.
This turns $X$ into a Riemannian manifold.
The Riemannian metric allows for the definition of the length of a curve $\gamma \subset U$ and area of a measurable set $B \subset U$, namely
\[
\ell(\gamma) := \int_\gamma \rho_U(z) |dz|, \quad \operatorname{area}(B) := \frac{i}{2} \int_B \rho_U(z)^2 dz d\overline{z};
\]
see also \cite[p. 21]{Jost06}.
The length of general curves is computed by splitting the curve into a final number of pieces, each of which is contained in a single coordinate chart; a similar method works for the area.
Note that $\operatorname{area}(X)$ is finite as $X$ is compact.

The Riemannian metric $\rho$ globally defines an area form (i.e.,  a nowhere-vanishing $2$ form) $dA$ (the dependence on $\rho$ is suppressed in the notation) in the local coordinate $(U,z)$ by
\[
\frac{i}{2} \rho_U(z)^2 dz d\overline{z}.	
\]
The area form $dA$ will be used to integrate functions.

The distance $d : X \times X \to [0,+\infty)$ between two points $p$ and $q$ can then be defined as
\begin{equation}\label{eq:defRiemDistance}
	d(p,q) := \inf\left\{\ell(\gamma) \mid \text{$\gamma: [0,1] \to X$ is a curve with $\gamma(0) = p$ and $\gamma(1) = q$} \right\}.
\end{equation}
The metric topology on $X$ defined by $d$ coincides with the original topology on $X$. 
This is because in a coordinate chart $(U,z)$, the induced distance $d(z(p),z(q))$ is equivalent to the Euclidean distance in $z(U)$ in the sense that for a fixed compact set $K \subset U$,
\begin{equation}\label{eq:distanceequiv}
	c |z(p) - z(q)| \leq d(z(p),z(q)) \leq C |z(p) - z(q)|, \quad p,q \in K,
\end{equation}
where $c > 0$ and $C > 0$ are the minimum and maximum of $\rho_U$ on $K$ respectively.
See also \cite[Theorem~1.18]{Aub98}.

\begin{lemma}\label{lem:diameterestimate}
	Let $K \subset X$ be compact and let $\{U_j\}_{j=1}^n$ be a finite open cover of $K$ with coordinate charts $(U_j, z_j)$. Then there exists an $\eta > 0$ and a $C > 0$ such that for every subset $K'$ of $K$ with diameter at most $\eta$, $K' \subset U_j$ for some $j$ and
	\[
	\diam(K') \leq C \diam(z_j(K')).
	\]
\end{lemma}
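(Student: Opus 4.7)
The plan is to combine the Lebesgue-number lemma with the standard local comparison between the Riemannian distance $d$ and the Euclidean distance in a coordinate chart, see \eqref{eq:distanceequiv}. The one subtlety is that the constant in the upper bound $d(p,q)\leq C|z_j(p)-z_j(q)|$ is only uniform on a compact subset of a single chart, and the straight-line path in local coordinates that realizes this estimate must also lie in that compact subset; thus the given cover $\{U_j\}$ must first be refined to coordinate balls whose closures are compact in some $U_j$.

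First I would refine the cover. For each $p\in K$, pick $j(p)$ with $p\in U_{j(p)}$ and a radius $r_p>0$ so small that the closed Euclidean disk $\overline{B(z_{j(p)}(p),r_p)}$ lies in $z_{j(p)}(U_{j(p)})$, and set $V_p:=z_{j(p)}^{-1}(B(z_{j(p)}(p),r_p))$. Then $V_p$ is open, its closure $\widetilde K_p:=\overline{V_p}$ is a compact subset of $U_{j(p)}$, and $z_{j(p)}(V_p)$ is a convex Euclidean ball. By compactness of $K$, finitely many $V_{p_1},\dots,V_{p_m}$ cover $K$.

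Next I would apply the Lebesgue-number lemma to the compact metric space $(K,d|_{K\times K})$ equipped with the open cover $\{V_{p_i}\cap K\}_{i=1}^m$ to obtain $\eta>0$ such that every $K'\subset K$ with $\diam(K')\leq\eta$ is contained in some $V_{p_i}$, and therefore in $\widetilde K_{p_i}\subset U_{j(p_i)}$. This gives the first conclusion of the lemma.

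For the diameter estimate, fix such a $K'\subset V_{p_i}$, set $j:=j(p_i)$, and let $C_i:=\max_{\widetilde K_{p_i}}\rho_{U_j}$, which is finite because $\rho_{U_j}$ is smooth and $\widetilde K_{p_i}$ is compact. For any $p,q\in K'$, the Euclidean segment from $z_j(p)$ to $z_j(q)$ lies entirely in the convex set $z_j(V_{p_i})$, and its preimage in $V_{p_i}$ is a curve joining $p$ and $q$ of Riemannian length at most $C_i|z_j(p)-z_j(q)|$. Hence $d(p,q)\leq C_i|z_j(p)-z_j(q)|$, and taking the supremum over $p,q\in K'$ gives $\diam(K')\leq C_i\diam(z_j(K'))$. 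The constant $C:=\max_{i}C_i$ then works uniformly. I do not anticipate any serious obstacle beyond arranging the refinement so that the convex-segment argument is available.
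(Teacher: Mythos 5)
Your proof is correct and follows essentially the same route as the paper's: refine the cover to sets compactly contained in the charts, take a Lebesgue number $\eta$ for the refined cover, and conclude with the local comparison between $d$ and the Euclidean coordinate distance. The only difference is that the paper simply invokes \eqref{eq:distanceequiv} on shrunken open sets $\tilde U_j \Subset U_j$, whereas you re-derive the upper bound $d(p,q)\leq C_i|z_j(p)-z_j(q)|$ by working in convex coordinate balls so that the comparison path is guaranteed to stay where $\rho_{U_j}$ is controlled --- a slightly more careful but not substantively different argument.
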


Note that we have the diameter with respect to $d$ (see \eqref{eq:defRiemDistance}) on the left and the standard Euclidean diameter on the right.

\begin{proof}[Proof of Lemma~\ref{lem:diameterestimate}]
	Let $K \subset X$ be compact and let $\{U_j\}_{j=1}^n$ be a finite open cover of $K$ with coordinate charts $(U_j, z_j)$ be given.
	By the properties of the metric topology, we can find open sets $\tilde{U}_j$ that are compactly contained in $U_j$ (that is, the closure of $\tilde{U}_j$ is compact and contained in $U_j$) such that $K \subset 
	\cup_{j=1}^n \tilde{U}_j$.
	Hence by \eqref{eq:distanceequiv}, for every $j = 1, \ldots, n$, there are numbers $C_j > 0$ such that
	\[
	d(p,q) \leq C_j |z_j(p) - z_j(q)|, \quad p,q \in \tilde{U}_j.
	\]

	Let $\eta > 0$ be a Lebesgue number for the open cover $\{\tilde{U}_j\}_{j=1}^n$ of $K$ (so every subset of $K$ with diameter at most $\eta$ is contained in some $\tilde{U}_j$) and take $C = \max_j C_j > 0$.
	Suppose that $K' \subset K$ has diameter $\diam(K') \leq \eta$. 
	Then $K'$ lies in some $\tilde{U}_j$.
	Moreover,
	\begin{align*}
		\diam(K') &= \sup_{p,q \in K'} d(p,q) \leq C_j \sup_{p,q \in K'} |z_j(p) - z_j(q)| \\ 
		&\leq C \sup_{p,q \in K'} |z_j(p) - z_j(q)| = C \diam(z_j(K')),
	\end{align*}
	which concludes the proof.
\end{proof}

The Riemannian metric $\rho$ moreover defines the Laplace-Beltrami operator $\Delta$ on functions $f \in C^2(X)$ by locally setting
\[
\Delta f = \frac{4}{\rho_U(z)^2} \frac{\partial}{\partial z} \frac{\partial}{\partial \overline{z}} f,
\]
see \cite[Definition 2.3.3]{Jost06} and also \cite[Section 1.1]{Chi18}.
Note that $\Delta f$ is a function on $X$. Moreover,
\[
\Delta f = - \star d \star df 
\]
where $\star$ denotes the Hodge star operator on $k$-forms (see \cite[Section 5.2]{Jost06}).
The $2$-form $d \star df$ is independent of $\rho$ (and is sometimes taken as a definition for the Laplacian, see e.g.\ \cite{FK80}) and hence $\Delta$ only depends on $\rho$ through the application of the Hodge star operator on $d \star d f$. 

\subsection{Proof of Proposition~\ref{prop:bipolarGreen}}

Because $X$ can be turned into a compact Riemannian manifold, it carries a Green's function of the Laplacian \cite[Theorem~4.13]{Aub98} (see also \cite[Theorem~2.1]{Bel19}).
This is a real-valued function $\tilde{G}$ defined on $X \times X$ minus the diagonal that is smooth, symmetric \begin{equation} \label{eq:Gtildesymmetric}
	\tilde{G}(p,q) = \tilde{G}(q,p)
	\qquad \text{ for  } p \neq q, \end{equation}
and satisfies the distributional identity
\begin{align}\label{eq:weak Laplacian Green's function Laplacian}
	\Delta_p \tilde{G}(p,q) = \delta_q(p) - \operatorname{area}(X)^{-1},
\end{align}
that is,
\[
\int \tilde{G}(p,q) \Delta f(p) dA(p) = f(q) - \operatorname{area}(X)^{-1} \int f dA
\]
for all $C^2$ functions $f$.
Moreover, \eqref{eq:Gtildesymmetric} and \eqref{eq:weak Laplacian Green's function Laplacian} 
define $\tilde{G}$ uniquely up to an additive constant.

Furthermore, it has the following local behavior:
if $z$ is a local coordinate around a point $p_0 \in X$, then
\begin{align} \label{eq:Gtildelocal}
	\tilde{G}(p,q) = -\frac{1}{2 \pi} \log |z(p) - z(q)| + O(1)
\end{align}
uniformly for $p$ and $q$ in a neighborhood of $p_0$,
as follows e.g.\ from the proof of Theorem~4.13(c) in \cite{Aub98} combined with \eqref{eq:distanceequiv}.

It should be noted that $\tilde{G}$ does not have any special behavior at $p_\infty$. 
From $\tilde{G}$ we obtain the bipolar Green's function 
with pole at $p_\infty$ as follows.

\begin{proposition}
	The function defined by 
	\begin{equation} \label{eq:Gdef}
		G(p,q) = 2 \pi [\tilde{G}(p,q) - \tilde{G}(p,p_\infty) - \tilde{G}(q,p_\infty)]
	\end{equation}
	satisfies the properties stated in Proposition~\ref{prop:bipolarGreen}.
\end{proposition}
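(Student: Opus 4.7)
The plan is to verify each of the four properties (a)--(d) by direct computation from the definition \eqref{eq:Gdef}, leveraging the three essential facts about $\tilde{G}$: its symmetry \eqref{eq:Gtildesymmetric}, its distributional Laplacian \eqref{eq:weak Laplacian Green's function Laplacian}, and its local logarithmic singularity \eqref{eq:Gtildelocal}. None of the steps requires heavy machinery; the substantive content of the proposition was already absorbed in constructing~$\tilde G$.

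First I would dispatch the symmetry (d), which is immediate from \eqref{eq:Gtildesymmetric}: the right-hand side of \eqref{eq:Gdef} is manifestly symmetric in $p$ and $q$. This also explains, a posteriori, why the seemingly redundant term $-\tilde G(q,p_\infty)$ (which is constant in $p$ and hence contributes nothing to derivatives in $p$) is included in the definition. Real-valuedness of $G(p,q)$ away from its singularities is inherited directly from $\tilde G$.

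Next I would verify harmonicity (a). Since $\tilde G$ is smooth off the diagonal, so is $G(\cdot,q)$ on $X\setminus\{p_\infty,q\}$, and there we may apply $\Delta_p$ pointwise. Using \eqref{eq:weak Laplacian Green's function Laplacian} and noting that $\tilde G(q,p_\infty)$ is independent of $p$, we compute
\begin{equation*}
\Delta_p G(p,q) = 2\pi\bigl[\bigl(\delta_q(p)-\operatorname{area}(X)^{-1}\bigr)-\bigl(\delta_{p_\infty}(p)-\operatorname{area}(X)^{-1}\bigr)\bigr] = 2\pi(\delta_q-\delta_{p_\infty}).
\end{equation*}
Thus the constant sources cancel — this is exactly the point of subtracting $\tilde G(p,p_\infty)$ — and $\Delta_p G(p,q)=0$ on $X\setminus\{p_\infty,q\}$, giving (a).

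Finally, for (b) and (c) I would read the singular behavior off \eqref{eq:Gtildelocal}. For (b), as $p\to q$ with $q\neq p_\infty$, only the term $2\pi\tilde G(p,q)$ in \eqref{eq:Gdef} is singular, while $\tilde G(p,p_\infty)$ and $\tilde G(q,p_\infty)$ are bounded; choosing a local coordinate $z$ with $z(q)=0$, \eqref{eq:Gtildelocal} yields $G(p,q)=-\log|z(p)|+O(1)$. For (c), as $p\to p_\infty$ with $q\neq p_\infty$, only $-2\pi\tilde G(p,p_\infty)$ is singular; with a local coordinate $z_\infty$ at $p_\infty$, \eqref{eq:Gtildelocal} gives $-2\pi\tilde G(p,p_\infty)=\log|z_\infty(p)|+O(1)$, so $G(p,q)=\log|z_\infty(p)|+O(1)$. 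There is no real obstacle here; the only thing to be careful about is the bookkeeping that ensures the $\operatorname{area}(X)^{-1}$ terms cancel in the distributional Laplacian, which is what forces the factor $2\pi$ and the particular symmetric correction in \eqref{eq:Gdef}.
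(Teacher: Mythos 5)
Your proof is correct and follows essentially the same route as the paper: symmetry and the local expansions (b), (c) are read off directly from \eqref{eq:Gtildesymmetric} and \eqref{eq:Gtildelocal}, and harmonicity comes from the cancellation of the $\operatorname{area}(X)^{-1}$ terms in the distributional Laplacian. The only cosmetic difference is that the paper passes from the distributional identity $\Delta_p G(\cdot,q)=2\pi(\delta_q-\delta_{p_\infty})$ to genuine harmonicity on $X\setminus\{p_\infty,q\}$ by citing Weyl's lemma, whereas you invoke the smoothness of $\tilde G$ off the diagonal to apply $\Delta_p$ pointwise — both are valid.
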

\begin{proof}	
	Parts (b) and (c) of Proposition~\ref{prop:bipolarGreen}  follow directly from
	\eqref{eq:Gtildelocal} and \eqref{eq:Gdef} and part (d)
	follows from \eqref{eq:Gdef} and the symmetry \eqref{eq:Gtildesymmetric} 
	of $\tilde{G}$. Hence it remains to check part (a).
	
	Fix a $q \in X \setminus \{p_\infty\}$. 
	The function $p \mapsto G(p,q)$ is clearly real-valued
	on $X \setminus \{ p_{\infty}, q\}$.
	Moreover, it follows from \eqref{eq:weak Laplacian Green's function Laplacian} and \eqref{eq:Gdef} that
	\begin{equation*}
		\Delta_p G(p,q) = 2 \pi (\delta_q(p) - \delta_{p_\infty}(p))
	\end{equation*}
	in a distributional sense.
	The right-hand side is zero for $p \notin \{p_\infty,q\}$, hence $p \mapsto G(p,q)$ is weakly harmonic on $X \setminus \{p_\infty,q\}$.
	By Weyl's lemma (see e.g. \cite[Theorem~3.4.2]{Jost06}), the function $p \mapsto G(p,q)$ is then harmonic on $X \setminus \{p_\infty, q\}$.
	This concludes the proof.	
\end{proof}

\end{document}